\documentclass[reqno]{amsart}%
\usepackage[utf8]{inputenc}%
\usepackage[english]{babel}%
\usepackage{amsmath,amssymb,amsthm,amsfonts}%
\usepackage{hyperref}%
\usepackage{enumerate}%
\usepackage{graphicx}
\usepackage{mathrsfs}
\usepackage{color}
\usepackage{accents}
\synctex=1%
\allowdisplaybreaks%
\DeclareGraphicsRule{*}{jpg}{*}{}%
\numberwithin{equation}{section}%

\usepackage{multicol}
\newcommand{\Z}{\mathbb{Z}}
\newcommand{\C}{\mathbb{C}}
\newcommand{\R}{\mathbb{R}}

\renewcommand{\i}{\mathrm{i}}

\newcommand{\al}{\alpha}

\newcommand{\la}{\lambda}
\newcommand{\La}{\Lambda}

\newcommand{\be}{\beta}

\newcommand{\de}{\delta}
\newcommand{\ga}{\gamma}
\newcommand{\ka}{\varkappa}


\newcommand{\GT}{\mathbb{GT}}
\newcommand{\T}{\mathbb{T}}
\newcommand{\Ps}{\mathsf{P}}
\newcommand{\Vs}{\mathbf{V}}
\newcommand{\x}{\mathsf{x}}

\newcommand{\Om}{\Omega}
\newcommand{\om}{\omega}
\DeclareMathOperator{\Dim}{\mathrm{Dim}}
\newcommand{\Cont}{\mathfrak{C}}
\newcommand{\cont}{\mathfrak{c}}
\newcommand{\q}{{}_{q}}
\newcommand{\qi}{{}_{q^{-1}}}
\newcommand{\vol}{\mathsf{vol}}
\newcommand{\Lb}{\mathbb{L}}
\newcommand{\Rs}{\mathcal{R}}




\newtheorem{proposition}{Proposition}[section]
\newtheorem{lemma}[proposition]{Lemma}

\newtheorem{theorem}[proposition]{Theorem}
\theoremstyle{definition}
\newtheorem{question}[proposition]{Question}
\newtheorem{definition}[proposition]{Definition}
\newtheorem{remark}[proposition]{Remark}
\newtheorem{remarks}[proposition]{Remarks}

\begin{document}
\title[The Boundary of the Gelfand--Tsetlin Graph]{The Boundary of the Gelfand--Tsetlin Graph:\\ New Proof of Borodin--Olshanski's Formula,\\ and its $q$-analogue}
\author{Leonid Petrov}
\address{Department of Mathematics, Northeastern University, 360 Huntington ave., Boston, MA 02115, USA}
\address{Dobrushin Mathematics Laboratory, Kharkevich Institute for Information Transmission Problems, Moscow, Russia}
\email{lenia.petrov@gmail.com}

\begin{abstract}
  In the recent paper \cite{BorodinOlsh2011GT}, Borodin and Olshanski have presented a novel proof of the celebrated Edrei--Voiculescu theorem which describes the boundary of the Gelfand--Tsetlin graph as a region in an infinite-dimensional coordinate space. This graph encodes branching of irreducible characters of finite-dimensional unitary groups. Points of the boundary of the Gelfand--Tsetlin graph can be identified with finite indecomposable (= extreme) characters of the infinite-dimensional unitary group. An equivalent description identifies the boundary with the set of doubly infinite totally nonnegative sequences. 

  A principal ingredient of Borodin--Olshanski's proof is a new explicit determinantal formula for the number of semi-standard Young tableaux of a given skew shape (or of Gelfand--Tsetlin schemes of trapezoidal shape). We present a simpler and more direct derivation of that formula using the Cauchy--Binet summation involving the inverse Vandermonde matrix. We also obtain a $q$-generalization of that formula, namely, a new explicit determinantal formula for arbitrary $q$-specializations of skew Schur polynomials. Its particular case is related to the $q$-Gelfand--Tsetlin graph and $q$-Toeplitz matrices introduced and studied by Gorin \cite{Gorin2010q}.
\end{abstract}
\keywords{Gelfand--Tsetlin graph; trapezoidal Gelfand--Tsetlin schemes; Edrei--Voiculescu theorem; inverse Vandermonde matrix; $q$-deformation; skew Schur polynomials}

\subjclass[2010]{05E10; 22E66; 31C35; 46L65}

\maketitle

\section{Introduction} 
\label{sec:introduction}

We begin with describing (in combinatorial terms) main results of the present paper, namely, a formula for the number of Gelfand--Tsetlin schemes of trapezoidal shape, and its $q$-generalization. In \S \ref{sec:the_boundary_of_the_gelfand_tsetlin_graph} below we explain how our results are related to (and motivated by) the Edrei--Voiculescu theorem which describes the boundary of the Gelfand--Tsetlin graph.

\subsection{Signatures and Gelfand--Tsetlin schemes} 
\label{sub:signatures_and_gelfand_tsetlin_schemes}

A \emph{signature} of length $N$ is a nonincreasing $N$-tuple of integers $\nu=(\nu_1\ge \ldots\ge\nu_N)\in\Z^{N}$. Let $\GT_N$ denote the set of all signatures of length $N$ (by agreement, $\GT_0=\{\varnothing\}$). The set $\GT_N$ parametrizes irreducible representations of the unitary group $U(N)$ \cite{Weyl1946}, so in the literature signatures are sometimes called \emph{highest weights}. Branching of irreducible representations of  unitary groups leads to the notion of \emph{interlacing of signatures} $\mu\in\GT_{N-1}$, $\nu\in\GT_N$:
\begin{align}\label{interlace}
  \nu_1\ge\mu_1\ge\nu_2\ge \ldots\ge\nu_{N-1}\ge\mu_{N-1}\ge\nu_{N}.
\end{align}
We denote this interlacing relation between signatures by $\mu\prec\nu$. By agreement, $\varnothing\prec\la$ for all $\la\in\GT_1$.

\begin{figure}[htbp]
	\begin{center}
		\includegraphics[width=135pt]{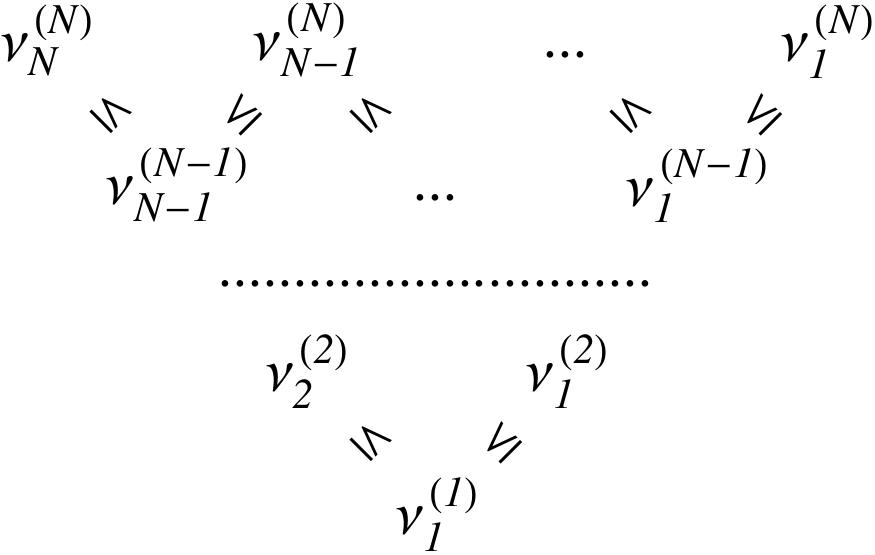}
	\end{center}
	\caption{A Gelfand--Tsetlin scheme of depth $N$.}
  \label{fig:GT_scheme}
\end{figure}

A \emph{Gelfand--Tsetlin scheme} of depth $N$ is a growing interlacing sequence of signatures:
\begin{align}\label{GT_scheme_sequence_of_signatures}
  \varnothing\prec\nu^{(1)}\prec\nu^{(2)}\prec
  \ldots\prec\nu^{(N-1)}\prec\nu^{(N)}.
\end{align}
One can also view this sequence as a triangular array of integers $\{\nu_{j}^{(m)}\}\in\Z^{N(N+1)/2}$ satisfying interlacing constraints indicated on Fig.~\ref{fig:GT_scheme}.


\subsection{Gelfand--Tsetlin graph} 
\label{sub:gelfand_tsetlin_graph}

The set of all signatures $\GT:=\bigsqcup_{N=0}^{\infty}\GT_N$ is traditionally equipped with a structure of a graded graph: we connect two signatures $\mu\in\GT_{N-1}$ and $\nu\in\GT_N$ by an edge iff $\mu\prec\nu$. This graded graph $\GT$ is called the \emph{Gelfand--Tsetlin graph}. 

A Gelfand--Tsetlin scheme with top row $\nu\in\GT_N$ is readily identified with a path in $\GT$ from the initial vertex $\varnothing\in\GT_0$ to $\nu$. Let $\Dim_N\nu$ denote the total number of such paths. In fact \cite{Weyl1946}, this number can be identified with the dimension of the irreducible representation of $U(N)$ indexed by the signature $\nu$. 

Also, define the \emph{relative dimension} $\Dim_{K,N}(\ka,\nu)$ to be the number of paths in the Gelfand--Tsetlin graph from the vertex $\ka\in\GT_K$ to the vertex $\nu\in\GT_N$, $K<N$. Such paths are identified with Gelfand--Tsetlin schemes of trapezoidal shape of depth $N-K+1$ with top row $\nu$ and bottom row $\ka$. If all the parts of $\ka$ and $\nu$ are nonnegative, $\Dim_{K,N}(\ka,\nu)$ also has an interpretation as the number of semi-standard Young tableaux of skew shape $\nu/\ka$ filled with numbers $K+1,\ldots,N$ \cite[Ch. I]{Macdonald1995}.


\subsection{Projections in Gelfand--Tsetlin schemes} 
\label{sub:projections_in_gelfand_tsetlin_schemes}

Consider the uniform probability measure $\Ps^{N,\nu}$ on the set of all Gelfand--Tsetlin schemes (\ref{GT_scheme_sequence_of_signatures}) with fixed top row $\nu\in\GT_N$. Clearly, $\Ps^{N,\nu}$ is supported on integer points inside some polyhedral region (of finite volume) in $\R^{N(N-1)/2}$ (with coordinates $\nu^{(1)},\ldots,\nu^{(N-1)}$). Fix $K<N$ and consider the projection of $\Ps^{N,\nu}$ onto the $K$th row $(\nu^{(K)}_{1},\ldots,\nu^{(K)}_{K})$ of the Gelfand--Tsetlin scheme. In this way, we get some (generally speaking, non-uniform) probability measure on $\GT_K\subset\Z^{K}$. According to the notation of \cite{BorodinOlsh2011GT}, we denote the probability of a signature $\ka\in\GT_K$ under this projected measure by $\La^{N}_{K}(\nu,\ka)$. Following \cite{BorodinOlsh2011Bouquet}, we call the stochastic matrix (=~Markov transition kernel) $\La^{N}_{K}$ of dimensions $\GT_N\times\GT_K$ ($K<N$) the \emph{link} from $\GT_N$ to $\GT_K$. 

It is readily seen that
\begin{align}\label{link_La}
	\La^{N}_{K}(\nu,\ka)=\Dim_K\ka\cdot 
	\frac{\Dim_{K,N}(\ka,\nu)}{\Dim_N\nu}.
\end{align}

As explained below in \S \ref{sec:the_boundary_of_the_gelfand_tsetlin_graph}, the Edrei--Voiculescu theorem (our main motivation in the present paper) boils down to the following question about the asymptotic behavior of the links (equivalent formulations of that theorem are discussed in \S \ref{sec:the_boundary_of_the_gelfand_tsetlin_graph}):

\begin{question}\label{question}
	Describe all possible sequences of signatures $\nu(1),\nu(2),\ldots$, where $\nu(N)\in\GT_N$, such that for every fixed level $K$ and signature $\ka\in\GT_K$, the sequence $\{\La^{N}_{K}(\nu(N),\ka)\}_{N\ge1}$ has a limit as $N$ goes to infinity. Such sequences $\{\nu(N)\}$ are called \emph{regular}.
\end{question}
The signatures $\nu(i)$'s do not need to interlace. Note that $\La^{N}_{K}(\nu(N),\ka)$ is well-defined only for $N>K$, but since $K$ is fixed and $N\to\infty$, the above question is well-posed. 

A possible approach to answering Question \ref{question} would be to obtain an explicit expression for the quantities $\Dim_{K,N}(\ka,\nu(N))/\Dim_N(\nu(N))$ (the $N$-dependent part of (\ref{link_La})) adapted to the desired asymptotic regime. Such an expression was first presented in \cite[Prop.~6.2]{BorodinOlsh2011GT}. We obtain an equivalent form of that expression (Theorem~\ref{thm:q=1_main_formula} below). 


\subsection{Number of trapezoidal Gelfand--Tsetlin schemes} 
\label{sub:main_result}

For $\nu\in\GT_N$, denote
\begin{align}\label{H_star}
	H^*(z;\nu):=\prod\nolimits_{r=1}^{N}\frac{z+r}{z+r-\nu_r}.
\end{align}

\begin{theorem}\label{thm:q=1_main_formula}
	For any $1\le K <N$, $\ka\in\GT_K$, and $\nu\in\GT_N$, we have the following formula:
	\begin{align}\label{skew_dim_det_A}
		\frac{\Dim_{K,N}(\ka,\nu)}{\Dim_N\nu}
		=
		\det[A_{i}(\ka_j-j)]_{i,j=1}^{K},
	\end{align}
	where\footnote{Here and below $(y)_m:=y(y+1)\ldots(y+m-1)$, $m=1,2,\ldots$ (with $(y)_0:=1$) denotes the Pochhammer symbol.}
	\begin{align}\label{A_i}
		A_i(x)=A_i(x\mid K,N,\nu):=\frac{N-K}{2\pi\i}
		\oint_{\Cont(x)}
		\frac{(z-x+1)_{N-K-1}}{(z+i)_{N-K+1}}H^*(z;\nu)dz.
	\end{align}
	Here the positively (counter-clockwise) oriented simple contour $\Cont(x)$ encircles points $x,x+1,\ldots,\nu_1-1$, and not the possible poles $x-1,x-2,\ldots,\nu_N-N$ coming from~$H^*(z;\nu)$.
\end{theorem}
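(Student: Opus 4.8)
The plan is to reduce the identity to a coefficient‑extraction problem for a single symmetric polynomial and then to collapse the resulting subset sum by Cauchy--Binet. First I would pass from counting to generating functions. Iterating the branching rule for Schur polynomials gives
\[
  s_\nu(x_1,\dots,x_K,\underbrace{1,\dots,1}_{N-K})=\sum_{\ka\in\GT_K}s_\ka(x_1,\dots,x_K)\,s_{\nu/\ka}(1^{N-K}),
\]
and since $s_{\nu/\ka}(1^{N-K})=\Dim_{K,N}(\ka,\nu)$ and $s_\nu(1^N)=\Dim_N\nu$, the left‑hand side of \eqref{skew_dim_det_A} is exactly the coefficient of $s_\ka$ in the normalized character $s_\nu(x_1,\dots,x_K,1^{N-K})/\Dim_N\nu$. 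Because $s_\ka(x)\prod_{a<b}(x_a-x_b)$ is the alternant with strictly decreasing exponents $\ka_j+K-j$, extracting the coefficient of $s_\ka$ amounts to reading off the coefficient of the monomial $x_1^{\ka_1+K-1}\cdots x_K^{\ka_K}$ in $s_\nu(x_1,\dots,x_K,1^{N-K})\prod_{a<b}(x_a-x_b)/\Dim_N\nu$. Signatures with negative parts are handled by a global shift $\nu\mapsto\nu+(c,\dots,c)$, under which both sides transform compatibly, so I may assume all parts are nonnegative when invoking the tableau interpretation.

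Next I would make the character explicit through the bialternant formula $s_\nu=\det[y_b^{\,l_a}]/\det[y_b^{\,N-a}]$ with $l_a=\nu_a+N-a$, evaluated at $y=(x_1,\dots,x_K,1,\dots,1)$. The repeated variable $1$ forces a confluent limit in which the last $N-K$ columns become derivative columns $\binom{l_a}{c-1}$; a Laplace expansion along the $K$ genuine $x$‑columns then writes the numerator as a sum over $K$‑subsets $S\subseteq\{1,\dots,N\}$ of a $K\times K$ alternant in $x$ with exponents $\{l_s\}_{s\in S}$ times an $(N-K)\times(N-K)$ minor of the binomial matrix. Dividing by $\Dim_N\nu=\prod_{a<b}(l_a-l_b)/\prod_{k=1}^{N-1}k!$ and by the confluent Vandermonde converts each complementary minor into the partial Vandermonde $\prod_{a<a'\in S^c}(l_a-l_{a'})$, and its ratio to the full Vandermonde produces precisely the Lagrange/residue weights $R_s=\prod_{a\ne s}(l_s-l_a)^{-1}$ — this is the point where the inverse Vandermonde matrix enters. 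Carrying out the monomial extraction column by column replaces the $x$‑alternant by a matrix $G_{s,j}\propto(a_s-(\ka_j-j)+1)_{N-K-1}$, where $a_s=l_s-N=\nu_s-s$.

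At this stage the ratio is a Cauchy--Binet sum $\sum_{|S|=K}\bigl(\prod_{s\in S}R_s\bigr)\det[F_{i,s}]_{i,s\in S}\det[G_{s,j}]_{s\in S,j}$, where $F_{i,s}=(a_s+1)_N/(a_s+i)_{N-K+1}$. The essential observation is that, for $1\le i\le K$, the factor $(a_s+i)_{N-K+1}$ divides $(a_s+1)_N$, so $F_{i,s}$ is a \emph{polynomial} of degree $K-1$ in $a_s$; hence $\det[F_{i,s}]_{i,s\in S}$ is a fixed scalar multiple of $\prod_{s<s'\in S}(a_s-a_{s'})$, independent of $S$. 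Running Cauchy--Binet backwards therefore collapses the subset sum into a single $K\times K$ determinant whose $(i,j)$ entry is $\sum_{s=1}^N R_sF_{i,s}G_{s,j}$. Finally, since $R_s(a_s+1)_N=\operatorname*{Res}_{z=a_s}H^*(z;\nu)$ and the rational factor $\frac{(z-x+1)_{N-K-1}}{(z+i)_{N-K+1}}H^*(z;\nu)$ decays like $z^{-2}$ (its apparent poles at $z=-i,\dots,-i-(N-K)$ being cancelled by the numerator $(z+1)_N$ of $H^*$), this entry is exactly the residue sum of $A_i(\ka_j-j)$, which yields \eqref{skew_dim_det_A} once the constants are matched.

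I expect the main obstacle to be the second step: controlling the confluent limit of the bialternant and tracking all signs and combinatorial constants — the superfactorials $\prod k!$, the Laplace signs, and the scalar relating $\det[F_{i,s}]$ to the Vandermonde — so that they telescope into the single clean prefactor $N-K$. A secondary but genuine point is the contour bookkeeping: the monomial extraction naturally produces the residues at the \emph{lower} poles $a_s\le(\ka_j-j)-(N-K)$, whereas $\Cont(x)$ encircles the \emph{upper} poles $a_s\in[\ka_j-j,\nu_1-1]$; these two residue sums agree up to an overall sign by the residue theorem precisely because the total residue (with the vanishing contribution at infinity) is zero, and the intermediate poles contribute nothing since the Pochhammer $(a_s-(\ka_j-j)+1)_{N-K-1}$ vanishes there.
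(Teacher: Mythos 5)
Your plan is sound and, once the deferred bookkeeping is carried out, would prove the theorem; but it reaches the determinant by a genuinely different route than the paper. The paper's proof in \S\ref{sec:proof_of_theorem_thm:q=1_main_formula} never touches the confluent bialternant: it encodes $1/\Dim_N\nu$ through the inverse Vandermonde matrix with nodes $\nu_j-j$, writes $\Dim_{K,N}(\ka,\nu)$ as a sum over all \emph{intermediate rows} of the trapezoid of products of one-variable transfer determinants (with a virtual particle appended to each row), and then performs $N-K$ successive Cauchy--Binet summations, each shrinking the determinant by one via the vanishing property (Lemma \ref{lemma:vanishing}); this first yields the double-contour formula of Proposition \ref{prop:skew_dim_det_A_first}, which is converted to \eqref{A_i} by a row transformation. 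You instead specialize all $N-K$ variables at once, Laplace-expand over $K$-subsets of $\{1,\dots,N\}$, and apply Cauchy--Binet a single time in the collapsing direction. The two arguments share their essential ingredients --- the Lagrange weights $R_s$ of \S\ref{sub:inverse_vandermonde_matrix}, the observation that $(z+1)_N/(z+i)_{N-K+1}$, $i=1,\dots,K$, is a basis of $\R_{\le K-1}[z]$ with an explicitly computable Vandermonde constant (your claim that $\det[F_{i,s}]_{i,s\in S}$ is an $S$-independent multiple of $\prod_{s<s'}(a_s-a_{s'})$ is exactly Lemma \ref{lemma:determinant_of_p_i_coeffs}), and the conversion of residue sums into the contour integral \eqref{A_i} --- and they terminate in the identical $K\times K$ matrix. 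What your route buys is directness: no intermediate rows, no virtual particles, no vanishing lemma, and you land on the single-contour entries immediately rather than via Proposition \ref{prop:skew_dim_det_A_first}. What the paper's route buys is that its level-by-level structure is reused verbatim for the general $q$-specializations of Theorem \ref{thm:general_q}, whereas your confluent limit is tied to the repeated evaluation point $1$.

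One step needs repair as written. The reduction to nonnegative signatures via $\nu\mapsto\nu+(c,\dots,c)$ is not as innocent as ``both sides transform compatibly'': the individual entries $A_i(x\mid K,N,\nu)$ are \emph{not} shift-covariant, because under $z\mapsto z+c$ the polynomial $(z+1)_N/(z+i)_{N-K+1}$ becomes a different element of $\R_{\le K-1}[z]$; only the determinant is invariant, and verifying that already requires the transition-determinant computation of Lemma \ref{lemma:determinant_of_p_i_coeffs}. The cleaner fix is to expand each entry $x^{l_s}/(x-1)^{N-K}$ at $x=\infty$ rather than at $x=0$: the coefficient of $x^{\ka_j+K-j}$ is then $1_{a_s\ge \ka_j-j}\,(a_s-\ka_j+j+1)_{N-K-1}/(N-K-1)!$, supported exactly on the poles encircled by $\Cont(\ka_j-j)$. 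This removes both the need for nonnegativity and the final residue-theorem flip between lower and upper poles. The remaining sign and superfactorial bookkeeping you flag is genuine but routine.
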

\begin{proposition}\label{prop:equivalence_to_BO}
	Formula (\ref{skew_dim_det_A}) is equivalent to \cite[Prop. 6.2]{BorodinOlsh2011GT}.
\end{proposition}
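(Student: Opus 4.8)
The plan is to first write out \cite[Prop.~6.2]{BorodinOlsh2011GT} in the notation of the present paper and to observe that, like (\ref{skew_dim_det_A}), it presents the ratio $\Dim_{K,N}(\ka,\nu)/\Dim_N\nu$ as the determinant of a $K\times K$ matrix whose $(i,j)$ entry depends on the column only through the single integer argument $x=\ka_j-j$. Denoting the Borodin--Olshanski entry function by $B_i(x)$, the goal reduces to comparing the two families of one-variable functions $\{A_i(x)\}_{i=1}^{K}$ and $\{B_i(x)\}_{i=1}^{K}$, with $A_i$ given by the contour integral (\ref{A_i}); here $x$ may be treated as a formal integer parameter, since the columns of the two matrices are built from the same data $\ka_1-1,\ldots,\ka_K-K$.

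Since the value of a determinant is invariant under any linear transformation of its rows of determinant $1$, I would not aim to prove the pointwise identity $A_i\equiv B_i$, but rather to exhibit an explicit unitriangular relation expressing each $A_i$ as a combination of $B_1,\ldots,B_i$ with leading coefficient $1$, after which the two determinants coincide. To produce such a relation I would expand the Pochhammer ratio in (\ref{A_i}) — whose denominator is the product $(z+i)(z+i+1)\cdots(z+i+N-K)$ — by partial fractions in $z$, evaluate the resulting integral as a sum of residues at the poles of $H^*(z;\nu)$ enclosed by $\Cont(x)$, and match this residue expansion term by term against the corresponding expansion of $B_i(x)$. The common factor $H^*(z;\nu)$ and the shared prescription that $\Cont(x)$ encircles $x,x+1,\ldots,\nu_1-1$ while avoiding the poles $x-1,x-2,\ldots,\nu_N-N$ should then cancel out of the comparison, leaving a purely algebraic identity between rational prefactors.

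The step I expect to be the main obstacle is reconciling how the row index $i$ enters the two expressions: in (\ref{A_i}) it appears only through the shift in the denominator $(z+i)_{N-K+1}$, whereas in \cite[Prop.~6.2]{BorodinOlsh2011GT} it is likely to occupy a structurally different position. Bridging this gap requires identifying the precise finite-difference or telescoping operation in $i$ that implements the shift $i\mapsto i+1$, and verifying that, read as a transformation of the rows of the determinant, it is unitriangular and hence contributes a trivial factor. The delicate bookkeeping lies in confirming that this transformation does not alter the set of poles enclosed by the contour, so that the residue expansions on the two sides are genuinely compared at the same points; once this is checked, substituting the resulting identity into (\ref{skew_dim_det_A}) and invoking determinant invariance completes the proof.
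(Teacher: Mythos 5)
Your reduction of the problem to comparing the two families of entry functions is the right first move, but the mechanism you propose for the comparison does not engage with how the Borodin--Olshanski entries are actually defined, and this is where the argument stalls. In \cite[Prop.~6.2]{BorodinOlsh2011GT} the $(i,j)$ entry is not given by a contour integral or a residue sum: it is the coefficient $\left(H^*(\cdot,\nu)\colon f_{\Lb(N,j),\ka_i-i+j}\right)$ of $H^*(z;\nu)$ in its expansion over the basis of rational functions $f_{\Lb,m}(z)=\prod_{x\in\Lb}(z-x)\big/\prod_{x\in\Lb}(z-x-m)$. Your plan to ``match residue expansions term by term'' therefore has nothing concrete on the Borodin--Olshanski side to match against until you first produce an analytic representation of this coefficient-extraction functional. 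That representation is precisely the missing key idea: one must prove the biorthogonality relation (\ref{delta_i_p}), namely that the integral operator $g\mapsto\frac{N-K}{2\pi\i}\oint_{\Cont(x)}\frac{(z-x+1)_{N-K-1}}{(z+i)_{N-K+1}}\,g(z)\,dz$ defining $A_i$ sends $f_{\Lb(N,i),x+p}$ to $\delta_{i,p}$. Once this is known, expanding $H^*$ in the basis $\{f_{\Lb(N,i),x+p}\}_{p\in\Z}$ and applying the functional gives the \emph{exact} entrywise identity (\ref{A_i_and_BO}), $A_i(x)=\left(H^*(\cdot,\nu)\colon f_{\Lb(N,i),x+i}\right)$, so that the two matrices in (\ref{skew_dim_det_A}) and (\ref{BO_formula}) are transposes of one another and the determinants coincide with no further work.

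Two of your anticipated difficulties consequently evaporate. No unitriangular row transformation is needed: the identity holds entry by entry, not merely up to triangular mixing. And there is no finite-difference or telescoping operation in $i$ to find: the index $i$ enters the Borodin--Olshanski formula through the interval $\Lb(N,i)=\{-N+K-i,\ldots,-i\}$, so that $f_{\Lb(N,i),x+p}(z)=\frac{(z+i)_{N-K+1}}{(z+i-x-p)_{N-K+1}}$, whose numerator cancels the denominator $(z+i)_{N-K+1}$ of (\ref{A_i}) on the nose. The only computation left after that cancellation is the case analysis behind (\ref{delta_i_p}): for $p=i$ the integrand reduces to $\frac{1}{(z-x)(z-x+N-K)}$ with a single enclosed pole of residue $1/(N-K)$, and for $p\ne i$ either no poles lie inside $\Cont(x)$ or all of them do, in which case the integral vanishes because the integrand decays as $z^{-2}$ at infinity. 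I would encourage you to restructure your argument around establishing (\ref{delta_i_p}) directly rather than around partial fractions of the Pochhammer ratio.
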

Theorem \ref{thm:q=1_main_formula} admits a rather simple and direct proof which uses the Cauchy--Binet summation involving the inverse Vandermonde matrix. We present proofs of Theorem \ref{thm:q=1_main_formula} and Proposition \ref{prop:equivalence_to_BO} in \S \ref{sec:proof_of_theorem_thm:q=1_main_formula}.

\begin{remarks}\label{rmk:q=1_main_formula}
	{\bf1.} Since the quantity $\Dim_N\nu$ is given by a simple product formula (\ref{Dim_N_product}), Theorem \ref{thm:q=1_main_formula} essentially provides an explicit formula for the number of Gelfand--Tsetlin schemes of a given trapezoidal shape.

	\smallskip
	\noindent
	{\bf2.} It is known that the uniform measure $\Ps^{N,\nu}$ viewed as a measure on interlacing particle arrays (see Fig.~\ref{fig:tiling_particles}) is a determinantal point process. In Theorem 5.1 in \cite{Petrov2012} the correlation kernel of this measure was expressed as a double contour integral. This implies the existence of a $K\times K$ determinantal formula for the left-hand side of (\ref{skew_dim_det_A}) with matrix elements expressed as double contour integrals. However, Theorem \ref{thm:q=1_main_formula} provides a simpler formula involving only single contour integrals.
\end{remarks}

The formula of Theorem \ref{thm:q=1_main_formula} provides a very useful tool to approach Question~\ref{question}. Indeed, in Question~\ref{question} the level $K$ and the signature $\ka\in\GT_K$ are fixed, and the limit transition involves taking large $N$ and varying $\nu(N)$. Since the determinant in (\ref{skew_dim_det_A}) is of fixed size $K\times K$, in order to understand the behavior of $\La^{N}_{K}(\nu(N),\ka)$, one can start by considering asymptotics of the individual matrix elements $A_i(x\mid K,N,\nu(N))$ (\ref{A_i}), where $i$ and $x$ are fixed. It turns out that every $A_i(x)$ has a nice asymptotic behavior, and in this way Question~\ref{question} may be resolved. We discuss an approach to Question \ref{question} using Theorem \ref{thm:q=1_main_formula} in more detail in \S \ref{sec:the_boundary_of_the_gelfand_tsetlin_graph} and \S \ref{sec:idea_of_proof_of_the_uniform_approximation_theorem}. This method (using a formula equivalent to (\ref{skew_dim_det_A})) was suggested and carried out in~\cite{BorodinOlsh2011GT}.


\subsection{$q$-generalization} 
\label{sub:_q_generalization}

There is a $q$-deformation of Theorem \ref{thm:q=1_main_formula} which replaces the uniform probability measure $\Ps^{N,\nu}$ (\S \ref{sub:projections_in_gelfand_tsetlin_schemes}) by its certain $q$-version. The most general result in this direction we obtain is formulated in \S \ref{sub:_q_specializations_of_skew_schur_polynomials} (Theorem \ref{thm:general_q}) in terms of $q$-specializations of skew Schur polynomials. 

Here let us formulate a particular case related to the $q$-deformation of the Gelfand--Tsetlin graph introduced in \cite{Gorin2010q} (we recall the definition of the $q$-deformed graph in \S \ref{sub:_q_gelfand_tsetlin_graph}). Let us interpret Gelfand--Tsetlin schemes as 3D stepped surfaces (see Fig.~\ref{fig:tiling_particles}), and set the weight of each scheme (\ref{GT_scheme_sequence_of_signatures}) proportional to $q^{\vol}$, where $\vol$ is the (suitably defined) volume under the corresponding stepped surface. Such measures on 3D stepped surfaces inside a finite shape were considered in, e.g., \cite{CohnKenyonPropp2000}, \cite{OkounkovKenyon2007Limit}, \cite{borodin-gr2009q}, \cite{Petrov2012}.

\begin{figure}[htbp]
  \begin{tabular}{cc}
    \includegraphics[width=320pt]{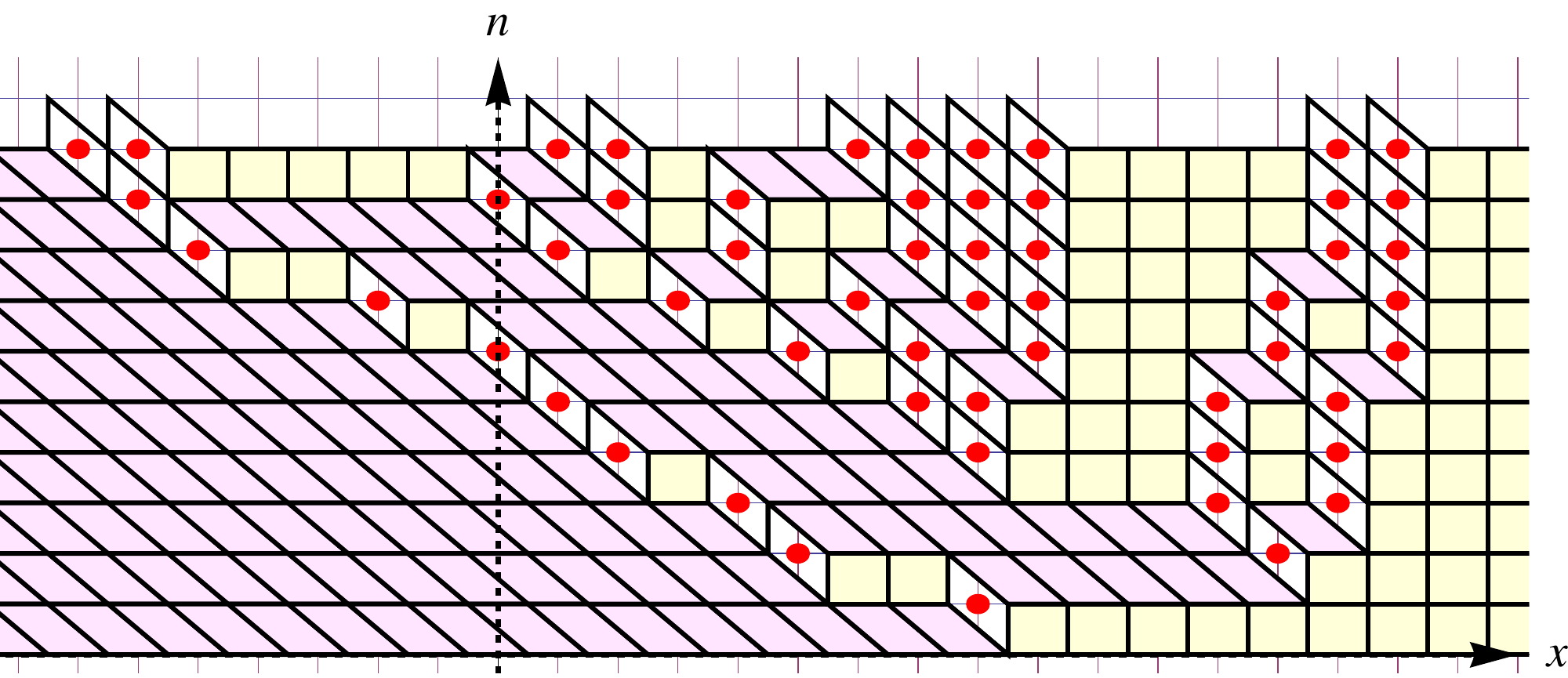}
  \end{tabular}
  \caption{Putting $n$ particles (red dots) $\x_{j}^{n}=\nu^{(n)}_j-j$, $j=1,\ldots,n$, on each $n$th horizontal line, $n=1,\ldots,N$, we obtain from the Gelfand--Tsetlin scheme (\ref{GT_scheme_sequence_of_signatures}) an interlacing particle configuration (\S \ref{sub:interlacing_arrays}) that can be interpreted as a lozenge tiling of the horizontal strip $0\le n\le N$ with $N$ small triangles added on top. This tiling may be also viewed as a 3D stepped surface.}
  \label{fig:tiling_particles}
\end{figure}

We will always assume that $0<q<1$. In the present paper we stick to the convention that the volume of a Gelfand--Tsetlin scheme (\ref{GT_scheme_sequence_of_signatures}) is equal to
\begin{align}\label{volume_GT_scheme}
	\vol(\nu^{(1)}\prec
  \ldots\prec\nu^{(N-1)}\prec\nu^{(N)}):=
  \sum_{n=1}^{N-1}|\nu^{(n)}|,\quad
  |\nu^{(n)}|:=\nu^{(n)}_{1}+\ldots+\nu^{(n)}_{n}.
\end{align}
Observe that we are summing over $N-1$ signatures because the $N$th signature $\nu^{(N)}=\nu$ is assumed to be fixed. Define \emph{the $q$-measure} $\q\Ps^{N,\nu}$ on the set of all Gelfand--Tsetlin schemes with fixed top row $\nu\in\GT_N$ by
\begin{align}\label{q_measure}
	\q\Ps^{N,\nu}
	(\nu^{(1)}\prec\ldots\prec\nu^{(N-1)}\prec\nu):=
	\frac1{\q \Dim_{N}\nu}\cdot{q^{\vol(\nu^{(1)}\prec\ldots\prec\nu^{(N-1)}\prec\nu)}},
\end{align}
where $\q\Dim_{N}\nu$ is a normalizing factor (partition function of the $q$-weighted triangular Gelfand--Tsetlin schemes). This is a $q$-analogue of $\Dim_N\nu$ defined in \S \ref{sub:gelfand_tsetlin_graph} (see also \S \ref{sub:number_of_triangular_gelfand_tsetlin_schemes}).

We define the \textit{$q$-links} $\q\La^{N}_{K}$ from $\GT_N$ to $\GT_K$ ($K<N$) using projections of the measure $\q\Ps^{N,\nu}$ in the same way as it was for $q=1$ in \S \ref{sub:projections_in_gelfand_tsetlin_schemes}. One can readily define the $q$-analogue $\q\Dim_{N,K}(\ka,\nu)$ of the number of trapezoidal Gelfand--Tsetlin schemes (see \S \ref{sub:proof_of_theorem_thm:q_main_formula}), so that 
\begin{align}\label{qlink_La}
	\q\La^{N}_{K}(\nu,\ka)=\q\Dim_K\ka\cdot 
	\frac{\q\Dim_{K,N}(\ka,\nu)}{\q\Dim_N\nu}.
\end{align}
\begin{theorem}\label{thm:q_main_formula}
	For any $1\le K <N$, $\ka\in\GT_K$, and $\nu\in\GT_N$, we have the following formula:
	\begin{align}\label{skew_dim_det_qA}
		\frac{\q\Dim_{K,N}(\ka,\nu)}{\q\Dim_N\nu}
		=
		(-1)^{K(N-K)}
		q^{(N-K)|\ka|}
		q^{-K(N-K)(N+2)/2}
		\det[\q A_{i}(\ka_j-j)]_{i,j=1}^{K},
	\end{align}
	where\footnote{Here and below $(a;q)_{m}:=(1-a)(1-aq)\ldots(1-aq^{m-1})$, $m=1,2,\ldots$ (with $(a;q)_{m}:=1$) denotes the $q$-Pochhammer symbol.}
	\begin{align}\label{qA_i}
		\q A_i(x)=\q A_i(x\mid K,N,\nu):=
		\frac{1-q^{N-K}}{2\pi\i}\oint_{\q\Cont(x)}dz
		\frac{(zq^{1-x};q)_{N-K-1}}{\prod_{r=i}^{N-K+i}(z-q^{-r})}
		\prod_{r=1}^{N}\frac{z-q^{-r}}{z-q^{\nu_r-r}}.
	\end{align}
	Here the positively (counter-clockwise) oriented simple contour $\q\Cont(x)$ encircles points $q^{x},q^{x+1},\ldots,q^{\nu_1-1}$, and not the possible poles $q^{x-1},q^{x-2},\ldots,q^{\nu_N-N}$.
\end{theorem}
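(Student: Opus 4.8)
The plan is to reduce Theorem~\ref{thm:q_main_formula} to the same Cauchy--Binet/inverse-Vandermonde mechanism that underlies Theorem~\ref{thm:q=1_main_formula}, now carried out at a geometric rather than a constant specialization of the variables; the only genuinely new inputs are the generating-function interpretation of the $q$-volume weighting and the bookkeeping of several powers of $q$. The first step is to identify the ratio on the left of \eqref{skew_dim_det_qA} with a principal $q$-specialization of skew Schur polynomials. Interpreting a Gelfand--Tsetlin scheme as an interlacing array and using the branching generating function
\[
s_{\nu/\ka}(x_{K+1},\dots,x_N)=\sum_{\ka=\nu^{(K)}\prec\cdots\prec\nu^{(N)}=\nu}\prod_{n=K+1}^{N}x_n^{|\nu^{(n)}|-|\nu^{(n-1)}|},
\]
one checks that the volume weighting \eqref{volume_GT_scheme} corresponds to the substitution $x_n=q^{N-n}$; indeed, an Abel summation of the exponent $\sum_n(N-n)(|\nu^{(n)}|-|\nu^{(n-1)}|)$ returns precisely $\vol$. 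This gives
\[
\frac{\q\Dim_{K,N}(\ka,\nu)}{\q\Dim_N\nu}=q^{c(K,N,\ka)}\,\frac{s_{\nu/\ka}(1,q,\dots,q^{N-K-1})}{s_\nu(1,q,\dots,q^{N-1})}
\]
for an explicit constant $c(K,N,\ka)$, which is the source of the prefactors $q^{(N-K)|\ka|}$ and $q^{-K(N-K)(N+2)/2}$.

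Next I would use the coproduct expansion $s_\nu(x,y)=\sum_\ka s_\ka(x)\,s_{\nu/\ka}(y)$ with $y=(1,q,\dots,q^{N-K-1})$ and $x=(x_1,\dots,x_K)$. Multiplying by the Vandermonde $a_\delta(x)=\prod_{1\le i<j\le K}(x_i-x_j)=\det[x_i^{K-j}]$, where $\delta=(K-1,\dots,1,0)$, turns the right-hand side into $\sum_\ka \det[x_i^{\ka_j+K-j}]\,s_{\nu/\ka}(y)$, so that $s_{\nu/\ka}(y)$ is the coefficient of the monomial $\prod_i x_i^{\ka_i+K-i}$ in $a_\delta(x)\,s_\nu(x,y)$. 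Writing $s_\nu(x,y)$ itself through the Weyl character formula as a ratio of $N\times N$ alternants and extracting this coefficient is where the inverse of the Vandermonde matrix $[x_i^{K-j}]$ evaluated at $x_i=q^{a_i}$ and the Cauchy--Binet identity enter, exactly as in the proof of Theorem~\ref{thm:q=1_main_formula}; they collapse the coefficient extraction into a single $K\times K$ determinant whose entries are indexed by the row $i$ and by $\ka_j-j$.

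The resulting determinant has entries that are finite sums of residues at the points $z=q^{\nu_r-r}$ with $\nu_r-r\in\{x,x+1,\dots,\nu_1-1\}$, where $x=\ka_j-j$. The final step is to recognize each such sum as the contour integral $\q A_i(\ka_j-j)$ of \eqref{qA_i}: since $0<q<1$, the contour $\q\Cont(x)$ encircles exactly the poles $q^{\nu_r-r}$ lying in the window $\{q^{x},\dots,q^{\nu_1-1}\}$ and avoids the spurious ones $q^{x-1},q^{x-2},\dots$, while the integrand's factors $\prod_{r=1}^N(z-q^{-r})/(z-q^{\nu_r-r})$ and $(zq^{1-x};q)_{N-K-1}/\prod_{r=i}^{N-K+i}(z-q^{-r})$ reproduce the inverse-Vandermonde entries together with the geometric skew-Schur weights. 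Collecting the sign $(-1)^{K(N-K)}$ from reordering the alternants and the $q$-powers from the first step then yields \eqref{skew_dim_det_qA}.

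I expect the main obstacle to lie in this last identification: one must verify that the $q$-Pochhammer numerator $(zq^{1-x};q)_{N-K-1}$ and the exact pole set $\{q^{-i},q^{-i-1},\dots,q^{-(N-K+i)}\}$ in the denominator of \eqref{qA_i} are precisely what the inverse Vandermonde and the geometric specialization of $s_{\nu/\ka}$ generate, and that the contour cleanly separates the poles $q^{\nu_r-r}$ in the window from the spurious ones. A secondary, purely computational difficulty is tracking the powers of $q$ through the Abel summation and the normalization of the alternants: unlike the $q=1$ case, where all such prefactors collapse to $1$, here they must combine into the nontrivial factor $(-1)^{K(N-K)}q^{(N-K)|\ka|}q^{-K(N-K)(N+2)/2}$. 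An alternative and arguably cleaner route is to obtain Theorem~\ref{thm:q_main_formula} directly as the principal-specialization case of the general Theorem~\ref{thm:general_q}, in which case only the first and last of the above steps remain to be carried out.
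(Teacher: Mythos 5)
Your overall architecture is sound and, in its cleaner variant (specializing Theorem \ref{thm:general_q} at $T=\{0,1,\ldots,N-K-1\}$), it is exactly the route the paper takes: the Abel-summation identification of $q^{\vol}$ with the principal specialization is correct and does produce the factor $q^{(N-K)|\ka|}$, and the Cauchy--Binet/inverse-Vandermonde machinery does collapse everything to a $K\times K$ determinant. The genuine gap is in your last step. The determinant that comes out of the inverse Vandermonde $\Vs(q^{\nu_N-N},\ldots,q^{\nu_1-1})^{-1}$ has entries $\q\psi^{T}_i(\ka_j-j)$ as in \eqref{q_psi}, whose dependence on the row index $i$ enters through the elementary symmetric functions $e_{i-j-1}(q^{\nu_N-N},\ldots,q^{\nu_1-1})$ (equivalently, through the factor $w^{-(N+1-i)}$ in a double contour integral), i.e.\ through a monic polynomial $\q\tilde p_i(z)$ of degree $i-1$ sitting in the integrand. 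This is \emph{not} the kernel \eqref{qA_i}: the pole window $\prod_{r=i}^{N-K+i}(z-q^{-r})$ there corresponds to a different basis $\q p_i(z)=(q;q)_{N-K}\prod_{r=1}^{N}(z-q^{-r})\big/\prod_{r=i}^{N-K+i}(z-q^{-r})$ of the space of polynomials of degree $\le K-1$. Passing from one to the other requires an explicit row transformation of the $K\times K$ matrix, and the determinant of the transition matrix must be computed (the paper gets $(-1)^{K(K-1)/2}q^{K(K-1)(K-3N-2)/6}(q;q)_{N-1}\cdots(q;q)_{N-K}$, by the same Vandermonde-ratio trick as in Lemma \ref{lemma:determinant_of_p_i_coeffs}). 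Your proposal omits this step entirely, expecting the pole set of \eqref{qA_i} to be ``precisely what the inverse Vandermonde generates'' --- it is not.

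Relatedly, you misattribute the constant: the factor $q^{-K(N-K)(N+2)/2}$ and the sign $(-1)^{K(N-K)}$ do not come from the Abel summation or from ``reordering the alternants''; they arise from combining the prefactor $(-q^{N})^{t_1+\cdots+t_{N-K}}\,V(q^{-1},\ldots,q^{-N})/V(q^{t_1},\ldots,q^{t_{N-K}})$ of \eqref{q_general_skew_schur} with the transition-matrix determinant above. The volume bookkeeping contributes only $q^{(N-K)|\ka|}$. Once the change-of-basis step and its determinant are supplied, the rest of your outline goes through.
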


\begin{proposition}\label{prop:q->1}
	In the $q\nearrow 1$ limit, Theorem \ref{thm:q_main_formula} becomes Theorem \ref{thm:q=1_main_formula}.
\end{proposition}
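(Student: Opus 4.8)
The plan is to pass to the limit directly in the contour-integral formula \eqref{qA_i} via the substitution $z=q^{\zeta}$, matching it term by term against \eqref{A_i}. Write $q=e^{-\epsilon}$ with $\epsilon=-\ln q\to 0^{+}$ as $q\nearrow 1$, so that $q^{a}=1-\epsilon a+O(\epsilon^{2})$ for any fixed $a$. Under $z=q^{\zeta}$ the poles $z=q^{-r}$ and $z=q^{\nu_{r}-r}$ of the integrand become $\zeta=-r$ and $\zeta=\nu_{r}-r$, i.e.\ precisely the (now $q$-independent) poles of \eqref{A_i}. I would therefore fix once and for all a contour $\Cont(x)$ in the $\zeta$-plane encircling $x,x+1,\ldots,\nu_{1}-1$ but none of $x-1,x-2,\ldots,\nu_{N}-N$, and take $\q\Cont(x)$ to be its image under $z=q^{\zeta}$; since the integral depends only on which poles are enclosed, this is legitimate, and for all small $\epsilon$ the enclosure conditions of Theorem~\ref{thm:q_main_formula} hold.

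Using $1-q^{a}\sim\epsilon a$ uniformly on the compact contour, the three $z$-dependent blocks of the integrand have the limits
\[
(zq^{1-x};q)_{N-K-1}\sim\epsilon^{N-K-1}(\zeta-x+1)_{N-K-1},
\]
\[
\textstyle\prod_{r=i}^{N-K+i}(z-q^{-r})\sim(-\epsilon)^{N-K+1}(\zeta+i)_{N-K+1},
\]
\[
\textstyle\prod_{r=1}^{N}\frac{z-q^{-r}}{z-q^{\nu_{r}-r}}\longrightarrow H^{*}(\zeta;\nu),
\]
the last ratio having its powers of $\epsilon$ cancel factor by factor; meanwhile the prefactor obeys $1-q^{N-K}\sim\epsilon(N-K)$ and the differential $dz=q^{\zeta}\ln q\,d\zeta\sim-\epsilon\,d\zeta$. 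Collecting powers of $\epsilon$, the $N-K+1$ powers in the numerator (one each from $1-q^{N-K}$ and $dz$, and $N-K-1$ from the $q$-Pochhammer) cancel the $N-K+1$ powers in the denominator, while the residual sign is $(-1)\cdot(-1)^{N-K+1}=(-1)^{N-K}$. Hence $\q A_{i}(x)\to(-1)^{N-K}A_{i}(x)$ for every fixed $i$ and $x$.

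Since each of the $K^{2}$ entries of the determinant acquires the same scalar $(-1)^{N-K}$, one gets $\det[\q A_{i}(\ka_{j}-j)]\to(-1)^{K(N-K)}\det[A_{i}(\ka_{j}-j)]$. On the right-hand side of \eqref{skew_dim_det_qA} the explicit factors $q^{(N-K)|\ka|}$ and $q^{-K(N-K)(N+2)/2}$ tend to $1$, and the explicit sign $(-1)^{K(N-K)}$ exactly cancels the $(-1)^{K(N-K)}$ coming from the determinant, so the whole right-hand side converges to $\det[A_{i}(\ka_{j}-j)]$, the right-hand side of \eqref{skew_dim_det_A}. On the left-hand side, $q^{\vol}\to1$ termwise forces $\q\Dim_{N}\nu\to\Dim_{N}\nu$ and $\q\Dim_{K,N}(\ka,\nu)\to\Dim_{K,N}(\ka,\nu)$, so the ratio converges to its $q=1$ counterpart, and \eqref{skew_dim_det_qA} indeed becomes \eqref{skew_dim_det_A}.

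The main obstacle is justifying the interchange of the limit $q\nearrow1$ with the contour integral. Because $\Cont(x)$ is a fixed compact contour bounded away from all (fixed) poles, and every factor of the integrand converges uniformly on it, this follows from uniform convergence; the only genuine care needed is the uniformity of the expansions $1-q^{a}\sim\epsilon a$ and $q^{\zeta}\to1$ over the compact contour, which is immediate. The real content is the bookkeeping of the two extra powers of $\epsilon$ (from $1-q^{N-K}$ and from $dz$) and of the signs, so that the $\epsilon$-powers cancel and the limit is finite; the clean cancellation of the prefactor sign $(-1)^{K(N-K)}$ against the determinant sign serves as a useful consistency check on that computation.
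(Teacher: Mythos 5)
Your proof is correct, and its skeleton is the same as the paper's: both arguments reduce the claim to the entrywise limit $\lim_{q\nearrow1}\q A_i(x\mid K,N,\nu)=(-1)^{N-K}A_i(x\mid K,N,\nu)$, observe that this multiplies the $K\times K$ determinant by $(-1)^{K(N-K)}$, and cancel that sign against the explicit prefactor in \eqref{skew_dim_det_qA} (the remaining $q$-powers and the two $q$-dimensions on the left-hand side being finite sums, their limits are immediate). Where you differ is in how the entrywise limit is computed. The paper first rewrites $\q A_i(x)$ as the finite sum of residues at $z=q^{\nu_j-j}$ with $\nu_j-j\ge x$ and takes the limit term by term; this turns the problem into an elementary limit of rational expressions and sidesteps all contour considerations. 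You instead keep the contour-integral form \eqref{qA_i}, substitute $z=q^{\zeta}$, and pass to the limit under the integral sign. That route is equally valid, but it is where the genuine (if routine) analytic content sits: one must fix the contour in the $\zeta$-plane rather than the $z$-plane (any fixed $z$-contour gets crushed onto $z=1$ as $q\nearrow1$), check that its image under the conformal, orientation-preserving map $\zeta\mapsto q^{\zeta}$ is an admissible choice of $\q\Cont(x)$ for all small $\epsilon$, and then use uniform convergence of the integrand on the fixed compact $\zeta$-contour --- all of which you address. The payoff of your version is that the cancellation of the $N-K+1$ powers of $\epsilon$ and the provenance of the sign $(-1)^{N-K}$ are visible directly at the level of the integrand, and the limiting contour prescription of Theorem \ref{thm:q=1_main_formula} emerges automatically; the payoff of the paper's version is brevity.
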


\begin{remark}[{cf. Remark \ref{rmk:q=1_main_formula}.2}]\label{rmk:q_main_formula}
	The determinantal kernel of the measure $\qi\Ps^{N,\nu}$ on Gelfand--Tsetlin schemes was computed in \cite[Thm. 4.1]{Petrov2012}. This result readily implies (by replacing $q$ by $q^{-1}$) the existence of a $K\times K$ determinantal formula as in (\ref{skew_dim_det_qA}) but with a much more complicated kernel expressed as a double contour integral containing a $q$-hypergeometric function ${}_2\phi_1$ inside. It seems remarkable that the technique of the present paper allows to obtain a much simpler single contour integral expression (\ref{qA_i}) for these matrix elements.
\end{remark}

We obtain Theorem \ref{thm:q_main_formula} in \S \ref{sec:_q_generalization_theorem_thm:q_main_formula_} as a corollary of a more general Theorem \ref{thm:general_q}. The latter deals with a larger class of $q$-measures on Gelfand--Tsetlin schemes than $\q\Ps^{N,\nu}$. We also explain how Theorem \ref{thm:q_main_formula} is related to the work of Gorin \cite{Gorin2010q} on the boundary of the $q$-Gelfand--Tsetlin graph and $q$-Toeplitz matrices. General $q$-measures on Gelfand--Tsetlin schemes whose projections (defined similarly to \eqref{qlink_La}; see Remark \ref{rmk:q_general_proj}) we compute in Theorem \ref{thm:general_q} allow to define other $q$-deformations of the classical Gelfand--Tsetlin graph. We plan to investigate boundaries of such deformations in a subsequent publication.


\subsection{Organization of the paper} 
\label{sub:organization_of_the_paper}

In \S \ref{sec:the_boundary_of_the_gelfand_tsetlin_graph} we briefly recall necessary definitions and results related to the Edrei--Voiculescu theorem describing the boundary of the Gelfand--Tsetlin graph. We also discuss various interpretations of these results in \S\S \ref{sub:representation_theoretic_meaning}--\ref{sub:totally_nonnegative_toeplitz_matrices}. The material of the latter two subsections is included to provide more background and motivations. In \S \ref{sub:uniform_approximation_theorem} we explain an approach to the Edrei--Voiculescu theorem employed in \cite{BorodinOlsh2011GT}. 

In \S \ref{sec:laurent_schur_polynomials} we recall the Laurent--Schur polynomials which provide a convenient algebraic framework for our proofs. 

In \S \ref{sec:proof_of_theorem_thm:q=1_main_formula} we prove Theorem \ref{thm:q=1_main_formula}, and show its equivalence to the Borodin--Olshanski's formula \cite[Prop.~6.2]{BorodinOlsh2011GT}. Then in \S \ref{sec:idea_of_proof_of_the_uniform_approximation_theorem} we briefly explain how the formula of Theorem \ref{thm:q=1_main_formula} leads to the Edrei--Voiculescu theorem.

In \S \ref{sec:_q_generalization_theorem_thm:q_main_formula_} we establish $q$-extensions of our results some of which are described in \S \ref{sub:_q_generalization}. In particular, we obtain Theorem \ref{thm:q_main_formula} as a corollary of a more general result (Theorem \ref{thm:general_q}) on $q$-specializations of skew Schur polynomials. 

\begin{remark}
	We have decided to present the proof of Theorem \ref{thm:q=1_main_formula} not as a $q\nearrow 1$ limit of Theorem \ref{thm:q_main_formula} (cf. Proposition \ref{prop:q->1}), but give instead a straightforward derivation in the  $q=1$ case which uses simpler notation than the argument for $0<q<1$. This allows to make the part of the paper about the ``classical'' ($q=1$) situation self-contained (e.g., in contrast with \cite{Petrov2012}). 
\end{remark}


\subsection{Acknowledgments} 
\label{sub:acknowledgments}

I am very grateful to Grigori Olshanski for drawing my attention to the problem, and to Alexei Borodin for useful comments. I would also like to thank Vadim Gorin for discussions regarding $q$-analogues. The work was partially supported by the RFBR-CNRS grants 10-01-93114 and 11-01-93105.



\section{The boundary of the Gelfand--Tsetlin graph} 
\label{sec:the_boundary_of_the_gelfand_tsetlin_graph}

\subsection{Coherent systems} 
\label{sub:coherent_systems}

There are several equivalent ways to define the boundary of a graded graph (such as the Gelfand--Tsetlin graph). Following, e.g., \cite{BorodinOlsh2011GT}, we use the notion of coherent systems.

\begin{definition}\label{def:coherent}
	Let $M_N$ be a probability measure on $\GT_N$ for each $N$. The sequence $\{M_N\}$ is called a \emph{coherent system} on $\GT$ if the measures $M_N$ are compatible with the links $\La^{N}_{N-1}$ (\S \ref{sub:projections_in_gelfand_tsetlin_schemes}):
	\begin{align*}
		M_N\La^{N}_{N-1}=M_{N-1},\qquad N\ge1, 
	\end{align*}
	or, in more detail (see (\ref{link_La}) and note that $\Dim_{N-1,N}(\mu,\la)=1$ if $\mu\prec\la$)
	\begin{align}\label{coherent_system_condition}
		\sum_{\nu\in\GT_N\colon\nu\succ\mu}
		M_N(\nu)
		\frac{\Dim_{N-1}\mu}{\Dim_{N}\nu}=M_{N-1}(\mu),
		\qquad \forall \mu\in\GT_{N-1}.
	\end{align}
	
	Coherent systems on $\GT$ form a convex set. A coherent system $\{M_N\}$ is called \emph{extreme} if it cannot be represented as a nontrivial convex combination $M_N=pM_N'+(1-p)M_N''$, $0<p<1$, of two other coherent systems $\{M_N'\}, \{M_N''\}$.
\end{definition}

\begin{definition}\label{def:boundary}
	The \emph{boundary} $\partial(\GT)$ of the Gelfand--Tsetlin graph $\GT$ is, by definition, the set of all extreme coherent systems on $\GT$.
\end{definition}

About connections of this notion with the minimal entrance boundary of a Markov chain, e.g., see \cite[\S2.2]{BorodinOlsh2011GT} and references therein.


\subsection{Connection to Question \ref{question}} 
\label{sub:connection_to_question_question}

Let us briefly discuss the Vershik--Kerov's idea (employed in, e.g., \cite{Vershik1975ergodic}, \cite{VK81AsymptoticTheory}, \cite{VK1981Characters}, \cite{VK82CharactersU}, \cite{vershik1987locally}) of approximating elements of the boundary $\partial(\GT)$ (= extreme coherent systems\footnote{Vershik and Kerov actually used an equivalent notion of extreme central measures on paths in the Gelfand--Tsetlin graph.}) by their finite-$N$ analogues.

Consider the part of the Gelfand--Tsetlin graph $\GT(N):=\GT_0\cup \ldots\cup \GT_N$ up to some fixed level $N$. Coherent systems on $\GT(N)$ are defined in the same way as for the whole graph $\GT$. Extreme coherent systems on $\GT(N)$ are in bijection with signatures $\nu\in\GT_N$. Namely, the extreme coherent system on $\GT(N)$ corresponding to $\nu\in\GT_N$ looks as
\begin{align}\label{M_K_on_GT(N)}
	M_K^{(N,\nu)}=\La^{N}_{K}\delta_{\nu}=\La^{N}_{K}(\nu,\cdot),\qquad
	K=0,1,\ldots,N-1,
\end{align}
where $\delta_{\nu}$ is the Dirac delta measure on $\GT_N$ supported at $\nu$, and $\La^{N}_{K}$ is the link~(\ref{link_La}).

As shown in \cite{Vershik1975ergodic} (see also \cite{VK81AsymptoticTheory}), every extreme coherent system on $\GT$ is a limit of those on $\GT(N)$ as $N\to\infty$. (The convergence of coherent systems is understood as (weak) convergence of their members, i.e., of the corresponding measures on $\GT_K$ for every fixed $K=1,2,\ldots$.) In detail, for every extreme coherent system $\{M_K\}_{K=0}^{\infty}$ on $\GT$ there exists a sequence of signatures $\nu(N)\in\GT_N$ ($\nu(N)$~is the index of the extreme coherent system on $\GT(N)$ for each $N$) such that for every fixed $K$ and $\ka\in\GT_K$ one has
\begin{align*}
	\lim_{N\to\infty}M_K^{(N,\nu(N))}(\ka)=M_K(\ka),
\end{align*}
where $M_K^{(N,\nu(N))}$ is defined in (\ref{M_K_on_GT(N)}). We see that in this way the problem of describing the boundary $\partial(\GT)$ becomes equivalent to Question \ref{question}. The sequence of signatures $\{\nu(N)\}$ above is the same as the regular sequence of Question~\ref{question}.


\subsection{Description of the boundary} 
\label{sub:description_of_the_boundary}

Let $\Om$ be a subset of the infinite-dimensional coordinate space $\R^{4\infty+2}$ defined by the following conditions:
\begin{align*}
	\Om:=\bigg\{\om=(\al^{+},\be^{+};\al^{-},\be^{-};\de^{+},\de^{-})\colon &
	\al^{\pm}_{1}\ge\al^{\pm}_{2}\ge \ldots\ge0;\;
	\be^{\pm}_{1}\ge\be^{\pm}_{2}\ge \ldots\ge0;
	\\&
	\sum\nolimits_{i=1}^{\infty}(\al^{\pm}_i+\be^{\pm}_i)
	\le\de^{\pm};
	\; \be^{+}_{1}+\be^{-}_{1}\le 1
	\bigg\}.
\end{align*}
As a subset of $\R^{4\infty+2}$ equipped with the product topology, the space $\Om$ is locally compact. Set $\ga^{\pm}:=\de^{\pm}-\sum_{i=1}^{\infty}(\al^{\pm}_i+\be^{\pm}_i)\ge0$. 

Let $\T:=\{u\in\C\colon|u|=1\}$ be the unit circle. Define the following function on $\T$ depending on $\om\in\Om$:
\begin{align}\label{Phi}
	\Phi(u;\om):=
	e^{\ga^+(u-1)+\ga^-(u^{-1}-1)}
	\prod_{i=1}^{\infty}
	\frac{1+\be_i^+(u-1)}{1-\al_i^+(u-1)}
	\frac{1+\be_i^-(u^{-1}-1)}{1-\al_i^-(u^{-1}-1)}.
\end{align}
Note that $\Phi(1;\om)=1$. Expand the function $\Phi(u;\om)$ as a Laurent series in $u$:
\begin{align}\label{Phi_Laurent}
	\Phi(u;\om)=\sum_{n=-\infty}^{\infty}\varphi_n(\om)u^{n}.
\end{align}
The Laurent coefficients $\varphi_n(\om)$ are themselves functions in $\om\in\Om$. They admit the following contour integral representations:
\begin{align}\label{phi_n_oint}
	\varphi_n(\om)=\frac{1}{2\pi\i}\oint_{\T}
	\Phi(u;\om)\frac{du}{u^{n+1}},\qquad n\in\Z.
\end{align}

Using the functions $\varphi_n(\om)$, define for every signature $\nu\in\GT_N$:
\begin{align}\label{varphi_nu}
	\varphi_\nu(\om):=\det[\varphi_{\nu_i-i+j}(\om)]_{i,j=1}^{N}.
\end{align}
Let the \emph{link from $\Om$ to $\GT_K$} for every $K$ be defined as
\begin{align}\label{link_infinity}
	\La^{\infty}_{K}(\om,\ka):=\Dim_K\ka\cdot \varphi_\ka(\om),\qquad
	\om\in\Om,\; \ka\in\GT_K.
\end{align}
It is not hard to show that (e.g., see \cite[Prop. 2.9]{BorodinOlsh2011GT}):
\begin{enumerate}[$\bullet$]
	\item $\La^{\infty}_{K}$ is indeed a link, i.e., 
	\begin{align*}
		\La^{\infty}_{K}(\om,\ka)\ge0,\qquad
		\sum\nolimits_{\ka\in\GT_K}\La^{\infty}_{K}(\om,\ka)=1.
	\end{align*}
	\item The links $\La^{N}_{K}$ and $\La^{\infty}_{K}$ are compatible in the sense that $\La^{\infty}_{N}\La^{N}_{K}=\La^{\infty}_{K}$ for $K<N$.
\end{enumerate}

\begin{figure}[htbp]
	\begin{center}
		\includegraphics[width=185pt]{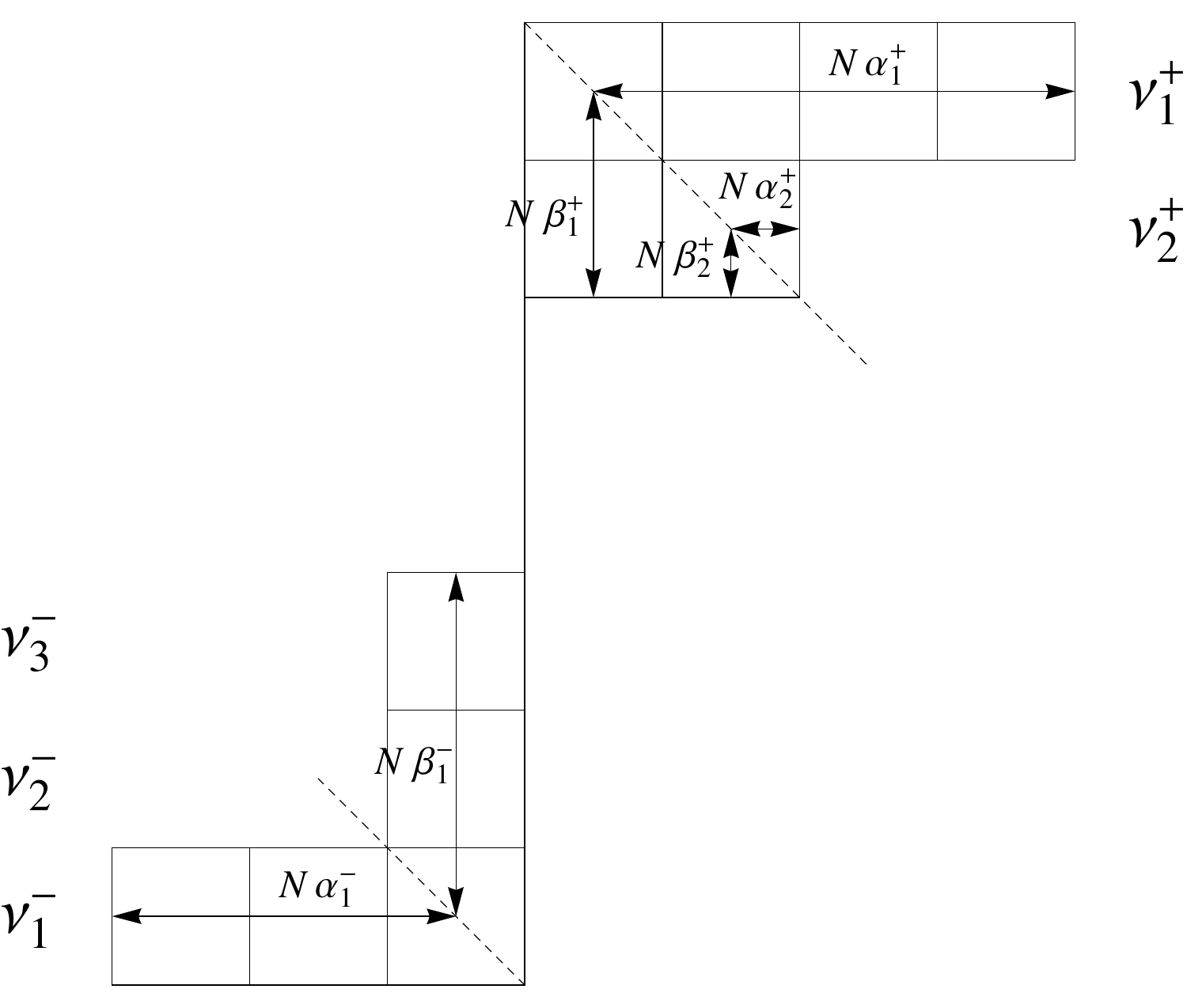}
	\end{center}
	\caption{Components $(\al^{\pm}(\nu),\be^{\pm}(\nu),\de^{\pm}(\nu))$ of the image $\om(\nu)$ of the signature $\nu\in\GT_N$ under the embedding $\GT_N\hookrightarrow\Om$. In this example $N=7$ and $\nu=(4,2,0,0,-1,-1,-3)$.}
  \label{fig:Frobenius}
\end{figure}

\begin{definition}\label{def:embeddings}
	Along with the links from $\Om$ to $\GT_N$, define embeddings $\GT_N\hookrightarrow \Om$, $\nu\mapsto\om(\nu)$ for $N=1,2,\ldots$ as follows. Write the signature $\nu$ as a union of ``positive'' and ``negative'' Young diagrams (= partitions, see \cite[Chapter I.1]{Macdonald1995})
	\begin{align*}
		\nu=(\nu_1^+,\ldots,\nu_{\ell^+}^+,-\nu_{\ell^-}^-,\ldots,-\nu_{1}^{-}),
		\qquad
		\nu_1^\pm\ge\ldots\ge\nu_{\ell^\pm}^\pm\ge0.
	\end{align*}
	Components $(\al^{\pm}(\nu),\be^{\pm}(\nu),\de^{\pm}(\nu))$ of the image $\om(\nu)$ look as (see Fig.~\ref{fig:Frobenius})
	\begin{align*}
		\al_i^{\pm}(\nu)=\frac{\nu^\pm_i-i+1/2}{N},\qquad
		\be_i^{\pm}(\nu)=\frac{(\nu^\pm)'_i-i+1/2}{N},\qquad
		\de^{\pm}(\nu)=\frac{|\nu^{\pm}|}{N},
	\end{align*}
	where $(\nu^\pm)'$ is the transposed Young diagram, and $|\nu^{\pm}|$ denotes the number of boxes in a diagram (in (\ref{volume_GT_scheme}), $|\nu|=|\nu^+|-|\nu^-|$). By agreement, if $\nu^\pm_i-i<0$, we set $\al_i^{\pm}(\nu)=0$, and similarly for $\be_i^{\pm}(\nu)$.
\end{definition}

\begin{theorem}[Edrei--Voiculescu]\label{thm:EV}
	There is a bijection between the boundary $\partial(\GT)$ of the Gelfand--Tsetlin graph and the space $\Om$ defined above. More precisely, 
	\begin{enumerate}[{\bf1.}]
		\item The extreme coherent system $\{M_N^{(\om)}\}$ corresponding to a point $\om\in\Om$ has the form (cf. (\ref{M_K_on_GT(N)}))
		\begin{align*}
			M_N^{(\om)}(\nu)=\La^{\infty}_{N}(\om,\nu),\quad 
			\mbox{for all $N=1,2,\ldots$ and $\nu\in\GT_N$}.
		\end{align*}
		\item For an extreme coherent system $\{M_N\}$, the corresponding point of $\Om$ is obtained as follows. Let $\om(M_N)$ denote the push-forward of the measure $M_N$ under the embedding $\GT_N\hookrightarrow\Om$. Then the measures $\om(M_N)$ on $\Om$ weakly converge to the delta measure which is supported at the point of $\Om$ corresponding to $\{M_N\}$.
	\end{enumerate}
\end{theorem}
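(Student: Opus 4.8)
The plan is to deduce the theorem from the Vershik--Kerov approximation scheme (\S\ref{sub:connection_to_question_question}) together with the explicit finite-$N$ formula of Theorem \ref{thm:q=1_main_formula}, reducing everything to a single Choquet-type integral representation. Concretely, I would first prove that a sequence $\{M_N\}$ is a coherent system on $\GT$ if and only if there is a probability measure $P$ on $\Om$ with
\[
	M_N(\nu)=\int_{\Om}\La^{\infty}_{N}(\om,\nu)\,P(d\om),\qquad N\ge1,\ \nu\in\GT_N,
\]
and that $P$ is \emph{uniquely} determined by $\{M_N\}$. Granting this, the asserted bijection is immediate: the extreme coherent systems are exactly those for which $P$ is an extreme point of the simplex of probability measures on $\Om$, i.e.\ a Dirac mass $\delta_{\om}$, which by the compatibility $\La^{\infty}_{N}\La^{N}_{N-1}=\La^{\infty}_{N-1}$ (already recorded in \S\ref{sub:description_of_the_boundary}) gives precisely $M_N^{(\om)}=\La^{\infty}_{N}(\om,\cdot)$ as in part~1. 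Uniqueness of $P$ yields injectivity of $\om\mapsto\{M_N^{(\om)}\}$, the representation itself yields surjectivity onto the boundary, and part~2 is the recovery map $\{M_N\}\mapsto P$ in the special case $P=\delta_{\om}$.

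To build $P$ I would use the embeddings $\GT_N\hookrightarrow\Om$ of Definition \ref{def:embeddings}. Fixing a coherent system $\{M_N\}$, consider the push-forward measures $\om(M_N)$ on $\Om$, regarded inside a suitable compactification $\bar\Om\supset\Om$; by compactness some subsequence converges weakly to a probability measure $P$ on $\bar\Om$. The crucial input is the uniform approximation theorem (\S\ref{sub:uniform_approximation_theorem}): for fixed $K$ and $\ka\in\GT_K$ one has $\La^{N}_{K}(\nu,\ka)=\La^{\infty}_{K}(\om(\nu),\ka)+o(1)$ as $N\to\infty$, uniformly in $\nu$. This is exactly where Theorem \ref{thm:q=1_main_formula} enters: since $\La^{N}_{K}(\nu,\ka)=\Dim_K\ka\cdot\det[A_i(\ka_j-j)]_{i,j=1}^{K}$ and $\La^{\infty}_{K}(\om,\ka)=\Dim_K\ka\cdot\det[\varphi_{\ka_i-i+j}(\om)]_{i,j=1}^{K}$, it suffices (the two determinants agreeing by transposition) to establish the termwise limit $A_i(x\mid K,N,\nu(N))\to\varphi_{x+i}(\om)$ whenever $\om(\nu(N))\to\om$. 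Comparing the integral (\ref{A_i}) for $A_i$ with (\ref{phi_n_oint}) for $\varphi_{x+i}$, I would carry out the Möbius change of variables that maps $\Cont(x)$ onto the unit circle $\T$ and rescales $z$ at the order $N$ dictated by the Frobenius coordinates; under this substitution $H^{*}(z;\nu(N))$ converges to $\Phi(u;\om)$ of (\ref{Phi}), while Stirling's formula turns the prefactor into $u^{-(x+i)-1}$, producing $\varphi_{x+i}(\om)$. Passing to the limit in $M_N\La^{N}_{K}=M_K$ along the subsequence then gives $M_K(\ka)=\int_{\bar\Om}\La^{\infty}_{K}(\om,\ka)\,P(d\om)$.

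Two points finish the representation. First I must check that $P$ charges only $\Om$, not the added locus $\bar\Om\setminus\Om$: this follows by verifying that $\La^{\infty}_{K}(\cdot,\ka)$ extends continuously to $\bar\Om$ while $\sum_{\ka\in\GT_K}\La^{\infty}_{K}(\om,\ka)$ drops below $1$ precisely on $\bar\Om\setminus\Om$, so that conservation of mass $\sum_{\ka}M_K(\ka)=1$ forces $P(\bar\Om\setminus\Om)=0$; in particular the limit is subsequence-independent and lives on $\Om$. Second, uniqueness of $P$ follows because the functions $\{\varphi_{\ka}(\cdot)\}$ separate points of $\Om$ and determine finite measures: the parameters $(\al^{\pm},\be^{\pm},\ga^{\pm})$ can be read off from the zeros, poles and exponential growth of the generating function $\Phi(u;\om)$, hence from the totality of Laurent coefficients $\varphi_n(\om)$ in (\ref{Phi_Laurent}).

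I expect the main obstacle to be the uniform approximation theorem, and specifically its two uniformity requirements. The pointwise limit $A_i\to\varphi_{x+i}$ is a routine saddle-point/Stirling computation once the change of variables is in place, but I must control it uniformly over all of $\GT_N$ (not merely along one convergent sequence) so that the weak limit of $\om(M_N)$ may legitimately be substituted into the coherency relations, and I must rule out escape of mass by quantitative tail estimates on the integrand of (\ref{A_i}). The delicate regime is that of signatures whose Frobenius coordinates are large or approach the degenerate locus where $\be_1^{+}+\be_1^{-}\to1$ or $\de^{\pm}\to\infty$ — i.e.\ the behavior near the boundary of the compactification — and it is exactly this analytic core that the single-contour formula of Theorem \ref{thm:q=1_main_formula} is designed to render tractable.
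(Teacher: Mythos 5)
Your overall architecture is the one the paper adopts (following Borodin--Olshanski): reduce Theorem \ref{thm:EV} to the Uniform Approximation Theorem \ref{thm:UAT} via the Vershik--Kerov approximation and a Choquet-type integral representation over $\Om$, and extract the UAT from the determinantal formula of Theorem \ref{thm:q=1_main_formula} by a M\"obius change of variables sending the contour to $\T$. Be aware, though, that the paper does not actually prove Theorem \ref{thm:EV}: it cites \cite{VK82CharactersU}, \cite{OkOl1998} for the statement, outlines only the step ``Theorem \ref{thm:q=1_main_formula} $\Rightarrow$ Theorem \ref{thm:UAT}'' in \S\ref{sec:idea_of_proof_of_the_uniform_approximation_theorem}, and defers both the implication from Theorem \ref{thm:UAT} to Theorem \ref{thm:EV} and the measure-theoretic points you raise (uniqueness of the representing measure, absence of escape of mass) to \cite[\S\S 2.8, 3]{BorodinOlsh2011GT}. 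So your proposal is a blueprint for the full argument, of which the paper supplies only the new analytic ingredient.

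Within that ingredient there is one concrete gap, and it sits exactly where you flag the main obstacle. You write that after the change of variables ``$H^*(z;\nu(N))$ converges to $\Phi(u;\om)$'' along a sequence with $\om(\nu(N))\to\om$. That formulation forces you to control $H^*$ itself uniformly over $\GT_N$, which is exactly the hopeless fight near the degenerate loci you describe. The actual mechanism (Proposition \ref{prop:contour_integral_representation}, resting on \cite[Prop.~5.2]{BorodinOlsh2011Bouquet}) is that under the substitution $z=-\frac12+\frac{N}{u-1}$ one has the \emph{exact identity} $H^*(z;\nu)=\Phi(u;\om(\nu))$ for every finite $N$ and every $\nu\in\GT_N$. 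Consequently $A_i(x)$ and $\varphi_{x+i}(\om(\nu))$ are integrals over $\T$ against the \emph{same} factor $\Phi(u;\om(\nu))$, which satisfies $|\Phi(u;\om)|\le1$ on $\T$; the entire $\nu$-dependence of the error is absorbed there, and what remains is the uniform convergence \eqref{convergence_to_power_of_u} of the $\nu$-independent rational kernel $\Rs^{(N)}_{K,x,i}(u)\to u^{-(x+i)}$ --- an elementary Pochhammer computation requiring no saddle-point analysis or Stirling asymptotics. Without this identity your plan has no route to the $\sup_{\nu\in\GT_N}$ in Theorem \ref{thm:UAT}; with it, the uniformity is essentially free. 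The remaining items of your outline (continuity of $\La^{\infty}_{K}(\cdot,\ka)$ on a compactification, mass conservation forcing the representing measure to charge only $\Om$, and separation of points by the $\varphi_n$) are correct in spirit and match \cite[\S3]{BorodinOlsh2011GT}.
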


In this form the Edrei--Voiculescu theorem was established in \cite{VK82CharactersU} (this is a note without proofs) and \cite{OkOl1998}. The latter paper's proof is based on the Binomial Formula for Schur (more generally, Jack) symmetric polynomials. In \S\S \ref{sub:representation_theoretic_meaning}--\ref{sub:totally_nonnegative_toeplitz_matrices} we describe equivalent formulations of Theorem \ref{thm:EV}. In \S \ref{sub:uniform_approximation_theorem} we explain the novel direct approach of Borodin and Olshanski \cite{BorodinOlsh2011GT} to Theorem \ref{thm:EV}, and put our result of Theorem \ref{thm:q=1_main_formula} in that framework.

\begin{remark}\label{rmk:reg_seq_answer}
	In the setting of Question \ref{question}, Theorem \ref{thm:EV} means that regular sequences of signatures $\{\nu(N)\}$, $\nu(N)\in\GT_N$, are characterized by the property that all rows, all columns of the Young diagrams $\nu^{\pm}(N)$, as well as their numbers of boxes $|\nu^{\pm}(N)|$  (cf. Definition \ref{def:embeddings} and Fig.~\ref{fig:Frobenius}) grow at most linearly in $N$.
\end{remark}


\subsection{Representation-theoretic interpretation} 
\label{sub:representation_theoretic_meaning}

Consider the increasing chain of finite-dimensional unitary groups
\begin{align}\label{U_Ns}
	U(1)\subset U(2)\subset U(3)\subset \ldots,
\end{align}
where the inclusions are defined as
\begin{align}\label{U_N_inclusion}
	U(N-1)\ni U\mapsto \begin{bmatrix}
		U&0\\
		0&1
	\end{bmatrix}\in U(N).
\end{align}

Let $U(\infty)$ be the union of the $U(N)$'s (\ref{U_Ns}). We equip $U(\infty)$ with the inductive limit topology.\footnote{In fact, in this topology $U(\infty)$ is neither a compact, nor even a locally compact group.} Every element $U\in U(\infty)$ lies in some $U(N)$ for large enough $N$, and thus has eigenvalues $(u_1,\ldots,u_N,1,1,\ldots)$, $u_i\in\T$.
\begin{definition}
	A \emph{character} $\chi$ of $U(\infty)$ is a function $\chi\colon U(\infty)\to\C$ which is 
	\begin{enumerate}[$\bullet$]
		\item continuous in the topology of $U(\infty)$ (i.e., restriction of $\chi$ to every $U(N)$ is continuous);
		\item constant on conjugacy classes of $U(\infty)$;
		\item positive definite;
		\item normalized so that $\chi(e)=1$, where $e=\mathrm{diag}(1,1,\ldots)\in U(\infty)$ is the unity of the group.
	\end{enumerate}
	Characters form a convex set, and so \emph{extreme characters} can be defined as extreme points of that set similarly to Definition \ref{def:coherent}.
\end{definition}

Extreme characters can be defined for any topological group. They serve as a natural replacement for the notion of irreducible characters (the latter make sense for, e.g., compact groups such as the $U(N)$'s). Extreme characters of $U(\infty)$ correspond to its finite factor representations \cite{Voiculescu1976}. In that paper, Voiculescu presented a list of extreme characters of $U(\infty)$ which are indexed by points of the space $\Om$ (\S \ref{sub:description_of_the_boundary}), and partially established completeness of that list. The value of the extreme character $\chi^{(\om)}$ corresponding to $\om\in\Om$ at an element $U\in U(\infty)$ with eigenvalues $(u_1,\ldots,u_N,1,1,\ldots)$ is 
\begin{align}\label{chi_om_product_Phi}
	\chi^{(\om)}(U)=\Phi(u_1;\om)\cdot\ldots\cdot \Phi(u_N;\om)=
	\prod_{u\in\text{spectrum of $U$}}\Phi(u;\om),
\end{align}
where $\Phi(u,\om)$ is given in (\ref{Phi}). In \cite{VK82CharactersU} the problem of describing characters of $U(\infty)$ was connected to the combinatorial Question \ref{question} (see \S \ref{sub:connection_to_question_question}). A connection of Voiculescu's work with earlier results on totally nonnegative Toeplitz matrices (see~\S \ref{sub:totally_nonnegative_toeplitz_matrices}) was discovered in \cite{Boyer1983}.

It should be mentioned that for the infinite symmetric group the same problem of classification of characters was solved by Thoma \cite{Thoma1964}.

For the finite-dimensional unitary groups $U(N)$ themselves, the extreme characters are precisely their normalized irreducible characters
\begin{align}\label{extreme_Schur_tilde}
	\tilde s_\nu(u_1,\ldots,u_N):=
	\frac{s_\nu(u_1,\ldots,u_N)}{s_\nu(1,\ldots,1)},\qquad \nu\in\GT_N
\end{align}
(where $s_\nu$'s are the Laurent--Schur polynomials (\ref{schur_polynomial})). Here $u_1,\ldots,u_N$ are eigenvalues of a unitary matrix $U\in U(N)$. The denominator $s_\nu(1,\ldots,1)=\Dim_N\nu$ is the dimension of the irreducible representation. This number also has a combinatorial interpretation as the number of Gelfand--Tsetlin schemes with fixed top row $\nu$ (\S \ref{sub:gelfand_tsetlin_graph}). The extreme characters of $U(\infty)$ are ``$N=\infty$'' analogues of the normalized irreducible characters $\tilde s_\nu$ of the $U(N)$'s (cf. \S\ref{sub:connection_to_question_question}).

Let us explain the connection of characters of $U(\infty)$ with coherent systems on the Gelfand--Tsetlin graph (\S \ref{sub:coherent_systems}). Restricting any character $\chi$ of $U(\infty)$ to $U(N)\subset U(\infty)$, one gets a normalized (but not necessary irreducible even if $\chi$ was extreme) character of $U(N)$. Let us write it as a linear combination of the normalized irreducibles $\tilde s_\nu$ with some coefficients $M_N(\nu)$:
\begin{align}\label{chi_N_expansion}
	\chi|_{{}_{U(N)}}=\sum\nolimits_{\nu\in\GT_N}M_N(\nu)\tilde s_\nu.
\end{align}
The numbers $\{M_N(\nu)\}_{\nu\in\GT_N}$ are nonnegative and sum to one, so they define a probability measure on $\GT_N$. Moreover, from the branching rule for the Laurent--Schur polynomials (\S \ref{sub:branching}) it follows that the probability measures $M_N$ on $\GT_N$ form a coherent system on $\GT$. In this way, characters of $U(\infty)$ are in one-to-one correspondence with coherent systems on the Gelfand--Tsetlin graph. Extreme characters correspond via (\ref{chi_N_expansion}) to extreme coherent systems. Thus, the result of Voiculescu on extreme characters of $U(\infty)$ is reformulated as Theorem \ref{thm:EV}. This reformulation is due to Vershik and Kerov, see citations in \S \ref{sub:connection_to_question_question}.


\subsection{Totally nonnegative Toeplitz matrices} 
\label{sub:totally_nonnegative_toeplitz_matrices}

\begin{definition}\label{def:TNTM}
	Let $\{b_n\}_{n\in\Z}$ be a sequence of nonnegative numbers such that $\sum_{n}b_n=1$. Consider the (doubly infinite) Toeplitz matrix $B:=[b_{j-i}]_{i,j\in\Z}$. The Toeplitz matrix $B$ and the sequence $\{b_n\}$ are called \emph{totally nonnegative} if all the minors (i.e., determinants of submatrices) of any order of the matrix $B$ are nonnegative. 
\end{definition}
Totally nonnegative Toeplitz matrices were classified by Edrei \cite{Edrei53} (see also \cite{AESW51}, \cite{ASW52}). The answer is that they are indexed by points of the same infinite-dimensional space $\Om$ (\S \ref{sub:description_of_the_boundary}). The generating function of the sequence $\{b_n^{(\om)}\}$ corresponding to $\om\in\Om$ has the form
\begin{align*}
	\sum\nolimits_{n\in\Z}b_n^{(\om)}u^{n}=\Phi(u;\om),
\end{align*}
where $\Phi(u;\om)$ is defined in (\ref{Phi}). In the notation of \S \ref{sub:description_of_the_boundary}, $b_n^{(\om)}=\varphi_n(\om)$.

It is worth noting that the classification of triangular totally nonnegative Toeplitz matrices (i.e., with $b_{-n}=0$, $n=1,2,\ldots$ in Definition \ref{def:TNTM}) which is also due to Edrei \cite{Edrei1952} (see also \cite{AESW51}), is equivalent to Thoma's description \cite{Thoma1964} of extreme characters of the infinite symmetric group.

Let us now explain how doubly infinite totally nonnegative Toeplitz matrices are related to the boundary of the Gelfand--Tsetlin graph. First, we need a lemma:

\begin{lemma}\label{lemma:Toeplitz_schur_expansion}
	Let a function $B(u)$, $u\in\T$, be expressed as a Laurent series $B(u)=\sum_{n\in\Z}b_n u^n$. Then for $u_1,\ldots,u_N\in\T$, we have
	\begin{align}\label{product_as_sum_over_schur}
		B(u_1)\cdot\ldots\cdot B(u_N)=
		\sum\nolimits_{\nu\in\GT_N}\det[b_{\nu_i-i+j}]_{i,j=1}^{N}
		\cdot
		s_\nu(u_1,\ldots,u_N).
	\end{align}
\end{lemma}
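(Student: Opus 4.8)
The plan is to multiply both sides of \eqref{product_as_sum_over_schur} by the Vandermonde determinant and thereby reduce the identity to a single coefficient extraction in the basis of alternants. Recall (see (\ref{schur_polynomial})) that the Laurent--Schur polynomial is the ratio of alternants $s_\nu=a_{\nu+\rho}/a_\rho$, where $\rho=(N-1,N-2,\ldots,1,0)$ and
\[
  a_\mu(u):=\det[u_i^{\mu_j}]_{i,j=1}^{N},
  \qquad
  a_\rho(u)=\det[u_i^{N-j}]_{i,j=1}^{N}=\prod_{1\le i<j\le N}(u_i-u_j).
\]
Multiplying \eqref{product_as_sum_over_schur} through by $a_\rho(u)$ turns the claim into the equivalent statement
\[
  a_\rho(u)\prod_{k=1}^{N}B(u_k)
  =\sum_{\nu\in\GT_N}\det[b_{\nu_i-i+j}]_{i,j=1}^{N}\;a_{\nu+\rho}(u),
\]
since the alternant $a_{\nu+\rho}$ carries the exponent vector $(\nu_j+N-j)_j$.

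Next I would expand the left-hand side directly. Writing $a_\rho(u)=\sum_{\si\in S_N}\operatorname{sgn}(\si)\prod_i u_i^{N-\si(i)}$ and $B(u_k)=\sum_{m}b_m u_k^{m}$ and multiplying out gives
\[
  a_\rho(u)\prod_{k=1}^{N}B(u_k)
  =\sum_{\si\in S_N}\operatorname{sgn}(\si)
  \sum_{m_1,\ldots,m_N\in\Z}\Big(\prod_{i=1}^{N}b_{m_i}\Big)
  \prod_{i=1}^{N}u_i^{\,N-\si(i)+m_i}.
\]
The function on the left is antisymmetric in $u_1,\ldots,u_N$ (a symmetric function times the antisymmetric $a_\rho$), hence it expands uniquely in the alternants $a_\mu$ over strictly decreasing $\mu\in\Z^{N}$. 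For such a $\mu$ the monomial $u^\mu=u_1^{\mu_1}\cdots u_N^{\mu_N}$ appears among the basis elements only in $a_\mu$ itself, and there with coefficient $+1$; consequently the coefficient of $a_\mu$ in any antisymmetric series equals the coefficient of the single monomial $u^\mu$.

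I would therefore read off the coefficient of $a_{\nu+\rho}$ by extracting the coefficient of $u_1^{\nu_1+N-1}\cdots u_N^{\nu_N}$. Matching exponents in the displayed double sum forces $N-\si(i)+m_i=\nu_i+N-i$, i.e. $m_i=\nu_i-i+\si(i)$, so that the coefficient equals
\[
  \sum_{\si\in S_N}\operatorname{sgn}(\si)\prod_{i=1}^{N}b_{\nu_i-i+\si(i)}
  =\det[b_{\nu_i-i+j}]_{i,j=1}^{N},
\]
which is precisely the claimed coefficient; dividing back through by $a_\rho$ completes the proof. The only genuine subtlety — and the step I expect to require the most care — is justifying the rearrangement of the infinite Laurent sums when multiplying out the product and isolating the coefficient of a single monomial. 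On the circle $u_k\in\T$ this is legitimate as soon as $\sum_n|b_n|<\infty$; in the situation where the lemma is applied one has $b_n=\varphi_n(\om)\ge0$ with $\sum_n b_n=\Phi(1;\om)=1$, so absolute convergence holds and Fubini applies. Alternatively, \eqref{product_as_sum_over_schur} may be read as an identity of formal Laurent series in the variables $b_n$, in which case the coefficient extraction above is purely algebraic and no analytic justification is needed.
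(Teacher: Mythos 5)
Your proof is correct and is exactly the ``straightforward computation'' that the paper delegates to Voiculescu's Lemme~2: multiply through by the Vandermonde alternant, expand the Laurent series over permutations, and read off the coefficient of $a_{\nu+\rho}$ by matching exponents, which yields the minor $\det[b_{\nu_i-i+j}]_{i,j=1}^{N}$. Your closing remark on justifying the rearrangement of the infinite sums (absolute convergence on $\T$, or a purely formal reading) is a sensible point of care that the paper itself does not spell out.
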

\begin{proof}
	This is a straightforward computation, e.g., see \cite[Lemme~2]{Voiculescu1976}. 

	Let us make a comment that the product $B(u_1)\ldots B(u_N)$ is a Laurent series in $u_1,\ldots,u_N$ which is of course symmetric in these variables. On the other hand, the Laurent--Schur polynomials $s_\nu(u_1,\ldots,u_N)$, where $\nu$ ranges over $\GT_N$, form a linear basis in the space $\R[u_{1}^{\pm1},\ldots,u_N^{\pm 1}]^{\mathfrak{S}(N)}$ of symmetric Laurent polynomials (\S \ref{sub:definition}). Identity (\ref{product_as_sum_over_schur}) is the explicit form of a representation of $B(u_1)\ldots B(u_N)$ as an (infinite) linear combination of the $s_\nu(u_1,\ldots,u_N)$'s.
\end{proof}

\subsubsection{From Toeplitz matrices to coherent systems} 
\label{ssub:from_toeplitz_matrices_to_coherent_systems}

Clearly, $\det[b_{\nu_i-i+j}]_{i,j=1}^{N}$ is a minor of the Toeplitz matrix $B$ of Definition \ref{def:TNTM}. If $B$ is totally nonnegative, then all the determinants of this form are nonnegative. Moreover, using the branching of Laurent--Schur polynomials (\S \ref{sub:branching}), one can readily check that the numbers
\begin{align*}
	M_N(\nu)=\Dim_N\nu\cdot  \det[b_{\nu_i-i+j}]_{i,j=1}^{N},\qquad
	\nu\in\GT_N,
\end{align*}
satisfy (\ref{coherent_system_condition}). Thus, $\{M_N\}$ is a coherent system on the Gelfand--Tsetlin graph. Moreover, it can be shown (e.g., see \cite{StratilaVoiculescu1982} or the approach of \cite{Olshanski1989_GK_Symm_eng}) that coherent systems of this form\footnote{That is, whose generating functions $\sum_{\nu\in\GT_N}M_N(\nu)\tilde s_\nu(u_1,\ldots,u_N)=B(u_1)\ldots B(u_N)$ (see (\ref{extreme_Schur_tilde}) and Lemma \ref{lemma:Toeplitz_schur_expansion}) are multiplicative in $u_1,\ldots,u_N$.} are extreme. In this way Edrei's classification of totally nonnegative Toeplitz matrices leads to Theorem \ref{thm:EV}.


\subsubsection{From coherent systems and characters to Toeplitz matrices} 
\label{ssub:from_coherent_systems_and_characters_to_toeplitz_matrices}

If $\{M_N\}$ is an extreme coherent system on the Gelfand--Tsetlin graph corresponding to a point $\om\in\Om$ (\S \ref{sub:representation_theoretic_meaning}), then from (\ref{chi_om_product_Phi}), (\ref{chi_N_expansion}), and (\ref{product_as_sum_over_schur}) we have $M_N(\nu)=\Dim_N\nu\cdot \varphi_\nu(\om)$, where $\varphi_\nu(\om)$ is defined in (\ref{varphi_nu}). The fact that we start from a character of $U(\infty)$ implies (via (\ref{chi_N_expansion})) that all the minors of the form $\varphi_\nu(\om)=\det[\varphi_{\nu_i-i+j}(\om)]_{i,j=1}^{N}$ for any $N$ and any $\nu\in\GT_N$ are nonnegative. These minors do not exhaust all possible minors. However, their nonnegativity is enough to conclude that the Toeplitz matrix $[\phi_{j-i}(\om)]_{i,j\in\Z}$ is totally nonnegative (e.g., see \cite[Thm. 9]{FZTP2000} and references after that Theorem). Thus, Theorem \ref{thm:EV} implies the result of Edrei on totally nonnegative Toeplitz matrices.
 


\subsection{Uniform Approximation Theorem} 
\label{sub:uniform_approximation_theorem}

Nowadays, there exist three different proofs of Theorem \ref{thm:EV}.\footnote{It is worth mentioning that yet another new way of establishing this theorem will appear soon in \cite{GorinPanova2012}.} The original works of Edrei and Voiculescu used theory of functions of a complex variable. The approach of \cite{OkOl1998} (outlined in \cite{VK81AsymptoticTheory}, \cite{VK82CharactersU}) answered Question \ref{question} about asymptotic behavior of the links $\La^{N}_{K}$ (see also~\S \ref{sub:connection_to_question_question}) by considering their generating function (in certain sense) and using the Binomial Theorem for Schur polynomials. 

The third, novel approach of \cite{BorodinOlsh2011GT} is based on a direct explicit formula for the links $\La^{N}_{K}$ (\ref{link_La}) which is equivalent to our Theorem \ref{thm:q=1_main_formula}. Using such a formula, it is possible to establish the following (see \S \ref{sec:idea_of_proof_of_the_uniform_approximation_theorem} and also \cite[\S\S 7--8]{BorodinOlsh2011GT}):

\begin{theorem}[Uniform Approximation Theorem {\cite[\S3]{BorodinOlsh2011GT}}]\label{thm:UAT}
	The finite-$N$ links (\ref{link_La}) are uniformly close to their ``$N=\infty$'' analogues (\ref{link_infinity}) in the sense that for any fixed $K=1,2,\ldots$ and $\ka\in\GT_K$, we have
	\begin{align*}
		\lim_{N\to\infty}
		\sup_{\nu\in\GT_N}
		\left|
		\La^{N}_{K}(\nu,\ka)-
		\La^{\infty}_{K}(\om(\nu),\ka)
		\right|=0,
	\end{align*}
	where $\om(\nu)\in\Om$ is described in Definition \ref{def:embeddings}.
\end{theorem}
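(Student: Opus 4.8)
The plan is to prove the Uniform Approximation Theorem by leveraging the explicit determinantal formula of Theorem~\ref{thm:q=1_main_formula}, which expresses the $N$-dependent part of the link $\La^{N}_{K}(\nu,\ka)$ as $\det[A_i(\ka_j-j)]_{i,j=1}^{K}$. First, I would rewrite the right-hand side $\La^{\infty}_{K}(\om(\nu),\ka)=\Dim_K\ka\cdot\varphi_\ka(\om(\nu))$ from \eqref{link_infinity}--\eqref{varphi_nu} in a parallel determinantal form, so that both sides become $\Dim_K\ka$ times a $K\times K$ determinant. Since $\Dim_K\ka$ is a fixed constant (depending only on the fixed data $K$ and $\ka$), the problem reduces to showing that the two $K\times K$ determinants are uniformly close as $N\to\infty$. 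Because the determinant of a fixed-size matrix is a continuous (indeed polynomial) function of its entries, and the entries are uniformly bounded (as I verify below), it suffices to prove a \emph{matrix-element} statement: for each fixed $i$ and each fixed $x=\ka_j-j$,
\begin{align*}
	\lim_{N\to\infty}\sup_{\nu\in\GT_N}
	\bigl|A_i(x\mid K,N,\nu)-A_i^{\infty}(x\mid\om(\nu))\bigr|=0,
\end{align*}
where $A_i^{\infty}$ is the corresponding limiting integrand built from $\Phi(u;\om(\nu))$. This is the heart of the matter.

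The key analytic step is to understand the single contour integral \eqref{A_i} in the large-$N$ regime. I would first perform a change of variables in the integral $A_i(x)$ that rescales $z$ to a variable on (or near) the unit circle, matching the contour $\Cont(x)$ to the circle $\T$ appearing in \eqref{phi_n_oint}. The product $H^*(z;\nu)=\prod_{r=1}^{N}\frac{z+r}{z+r-\nu_r}$ is exactly the finite-$N$ object whose logarithmic asymptotics, under the embedding $\nu\mapsto\om(\nu)$ of Definition~\ref{def:embeddings}, should converge to $\log\Phi(u;\om(\nu))$: the Frobenius-type coordinates $\al_i^\pm,\be_i^\pm,\de^\pm$ are precisely normalized so that the poles and zeros of $H^*$, after rescaling, accumulate on the data entering the product formula \eqref{Phi}. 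The prefactor $(z-x+1)_{N-K-1}/(z+i)_{N-K+1}$, times $(N-K)/(2\pi\i)$, should converge to the Cauchy-type kernel $u^{-(x+1)}$ that extracts the Laurent coefficient $\varphi_x(\om)$ in \eqref{phi_n_oint}, up to the shift by $i$ producing the index $\ka_j-j$ versus $\nu_i-i+j$ bookkeeping in \eqref{varphi_nu}. I would make this precise by taking logarithms, expanding $\log H^*$ as a Riemann-type sum that converges to $\ga^\pm(u^{\pm1}-1)+\sum_i\log\frac{1+\be_i^\pm(u^{\pm1}-1)}{1-\al_i^\pm(u^{\pm1}-1)}$, and controlling the convergence of the integrand on the contour.

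The main obstacle will be establishing that the convergence is \emph{uniform} in $\nu\in\GT_N$, not merely pointwise for a fixed sequence $\nu(N)$. The difficulty is that as $\nu$ ranges over all of $\GT_N$, the coordinates $\om(\nu)$ range over a large subset of the locally compact (but noncompact) space $\Om$, and the parameters $\ga^\pm,\de^\pm$ may grow; the contour $\Cont(x)$ itself depends on $\nu$ through $\nu_1$ and $\nu_N$. To handle this I would use a compactness/exhaustion argument: for $\om(\nu)$ in a fixed compact part of $\Om$ the uniform estimate follows from equicontinuity of the integrands, while for $\nu$ with some large coordinate (large rows, columns, or $|\nu^\pm|$) one must show that \emph{both} $A_i$ and $A_i^\infty$ become small, or else that the growing parameters enter $\Phi$ only through the well-controlled exponential factors $e^{\ga^\pm(u^{\pm1}-1)}$. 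Here I expect to need careful bounds on $|\Phi(u;\om)|$ uniformly over $\T$, together with a deformation of the contour $\Cont(x)$ to the unit circle that remains valid uniformly in $\nu$ (tracking that no unwanted poles $q^{x-1},\dots$—here the $q=1$ poles $x-1,x-2,\dots$—are crossed).

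Finally, once the uniform matrix-element convergence is in hand, I would assemble the determinant estimate. Writing $\det[A_i(\ka_j-j)]-\det[A_i^\infty(\ka_j-j)]$ as a telescoping sum of determinants differing in one column, and bounding each term by (uniform bound on entries)$^{K-1}$ times $\sup_{i,j}|A_i(\ka_j-j)-A_i^\infty(\ka_j-j)|$, gives the required uniform convergence of the full link, completing the proof. The only genuinely delicate ingredient is the uniform boundedness of the entries $A_i(\ka_j-j\mid K,N,\nu)$ over all $\nu\in\GT_N$, which again reduces to a uniform bound on the contour integral and would be proved alongside the convergence estimate.
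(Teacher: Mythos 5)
Your overall architecture matches the paper's (deform the contour in \eqref{A_i} to the unit circle, show the deterministic part of the integrand converges to $u^{-(x+i)}$, and compare the two $K\times K$ determinants), but the step you yourself flag as ``the main obstacle'' --- uniformity in $\nu\in\GT_N$ --- is precisely where your plan has a genuine gap, and the paper closes it by a device you are missing. You propose to show that $\log H^*(z;\nu)$ converges to $\log\Phi(u;\om(\nu))$ as a Riemann-type sum and then to control the error uniformly over the non-compact family $\{\om(\nu)\colon\nu\in\GT_N\}$ by a compactness/exhaustion argument. But there is nothing to converge: under the substitution $z=-\tfrac12+\tfrac{N}{u-1}$ one has the \emph{exact} identity $H^*(z;\nu)=\Phi(u;\om(\nu))$ for every finite $N$ and every $\nu\in\GT_N$ (this is \cite[Prop.~5.2]{BorodinOlsh2011Bouquet}; the modified Frobenius coordinates of Definition~\ref{def:embeddings} are normalized precisely so that this holds, with $\ga^{\pm}(\om(\nu))=0$). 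Consequently (Proposition \ref{prop:contour_integral_representation}) $A_i(x)=\frac{1}{2\pi\i}\oint_\T\Phi(u;\om(\nu))\,\Rs^{(N)}_{K,x,i}(u)\,\frac{du}{u}$, where the factor $\Rs^{(N)}_{K,x,i}(u)$ carries \emph{no} $\nu$-dependence and converges to $u^{-(x+i)}$ uniformly on $\T$. Since $\varphi_n(\om)\ge0$ and $\sum_n\varphi_n(\om)=1$ give $|\Phi(u;\om)|\le1$ on $\T$, one gets
\begin{align*}
\bigl|A_i(x)-\varphi_{x+i}(\om(\nu))\bigr|\le\sup_{u\in\T}\bigl|\Rs^{(N)}_{K,x,i}(u)-u^{-(x+i)}\bigr|,
\end{align*}
which is automatically uniform in $\nu$: no exhaustion of $\Om$, no case analysis for ``large'' $\nu$, and no separate uniform bound on $|\Phi|$ are needed. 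Without the exact identity your asymptotic route is genuinely problematic, since $\al_1^{\pm}(\nu)$ is unbounded as $\nu$ ranges over $\GT_N$, so a log-expansion controlled only on compact parts of $\Om$ cannot deliver the required $\sup_{\nu\in\GT_N}$.

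A secondary issue is the contour deformation itself, which is nontrivial and $\nu$-dependent: the paper first drags $\Cont(x)$ to $\Cont(x-N+K+1)$ using the zeros of $(z-x+1)_{N-K-1}$, passes to a vertical line using the $z^{-2}$ decay at infinity, and then checks that the image circle under $u=1+N/(z+\tfrac12)$ separates the poles of $\Phi(\cdot;\om(\nu))$ lying in $(-1,1)$ from those in $(1,\infty)$ for all $N>x+K+1$. Your sketch acknowledges this must be ``valid uniformly in $\nu$'' but offers no mechanism. Once Proposition \ref{prop:contour_integral_representation} is in hand, your telescoping of the determinant difference would work; the paper's variant, writing both determinants as $K$-fold torus integrals against the common weight $\prod_k\Phi(u_k;\om(\nu))$ and comparing the $\nu$-independent kernels inside, is marginally cleaner but not essentially different at that final stage.
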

As explained in \cite[\S3]{BorodinOlsh2011GT}, using some properties of the space $\Om$ and of the functions $\varphi_n(\om)$ (\ref{phi_n_oint}) on it, it is possible to deduce Theorem \ref{thm:EV} from Theorem~\ref{thm:UAT}. In fact, Theorem \ref{thm:UAT} also naturally implies an equivalent claim that the set of all (not necessary extreme) coherent systems on the Gelfand--Tsetlin graph is in a bijection with the space of Borel probability measures on $\Om$, see \cite[\S2.8]{BorodinOlsh2011GT}.

In \S \ref{sec:idea_of_proof_of_the_uniform_approximation_theorem} we will outline a proof of Theorem \ref{thm:UAT} using our result of Theorem \ref{thm:q=1_main_formula}.



\section{Laurent--Schur polynomials} 
\label{sec:laurent_schur_polynomials}

In this section we collect various definitions and results related to the Laurent--Schur polynomials. Most of them can be found in one form or another in \cite[Ch. I]{Macdonald1995}. Laurent--Schur polynomials provide a convenient framework for our proofs of Theorems \ref{thm:q=1_main_formula} and \ref{thm:q_main_formula}. Although the proof of Theorem \ref{thm:q=1_main_formula} can be given without any reference to Laurent--Schur polynomials, we use them in our proof in \S \ref{sec:proof_of_theorem_thm:q=1_main_formula} so that the ``classical'' ($q=1$) situation can be compared with its $q$-deformation discussed~in~\S \ref{sec:_q_generalization_theorem_thm:q_main_formula_}.

\subsection{Definition} 
\label{sub:definition}

Consider the space $\R[u_{1}^{\pm1},\ldots,u_N^{\pm 1}]^{\mathfrak{S}(N)}$ of symmetric Laurent polynomials in $N$ variables $u_1,\ldots,u_N$. A linear basis in this space is formed by the Laurent--Schur polynomials
\begin{align}\label{schur_polynomial}
  s_\nu(u_1,\ldots,u_N)=
  \frac{\det[u_i^{\nu_j+N-j}]_{i,j=1}^{N}}{\det[u_i^{N-j}]_{i,j=1}^{N}},
  \qquad \nu\in\GT_N,
\end{align}
indexed by all signatures of length $N$. Each $s_\nu(u_1,\ldots,u_N)$ is a homogeneous symmetric polynomial in $u_1^{\pm1},\ldots,u_N^{\pm1}$ of degree $|\nu|=\nu_1+\ldots+\nu_N\in\Z$ (this number is not necessary nonnegative). In particular, $s_{\varnothing}\equiv 1$.

Note that the denominator in (\ref{schur_polynomial}) is simply the Vandermonde determinant
\begin{align}\label{Vandermonde}
  V(u_1,\ldots,u_N):=\det[u_i^{N-j}]_{i,j=1}^{N}
  =\prod\nolimits_{1\le i<j\le N}(u_i-u_j).
\end{align}

For nonnegative signatures $\nu$ (i.e., for which $\nu_1\ge \ldots\ge\nu_N\ge0$), the Laurent--Schur polynomials become the ordinary Schur polynomials (which are honest symmetric polynomials in the variables $u_1,\ldots,u_N$). The ordinary Schur polynomials possess the \emph{stability property}:
\begin{align*}
	s_\nu(u_1,\ldots,u_N,u_{N+1}=0)=s_\nu(u_1,\ldots,u_N).
\end{align*}
(We append nonnegative signatures by zeroes as $\nu=(\nu_1,\ldots,\nu_\ell,0,0,\ldots)$, where $\ell$ is the number of positive parts in $\nu$.)

The Laurent--Schur polynomials $s_\nu(u_1,\ldots,u_N)$, $\nu\in\GT_N$ (if $u_1,\ldots,u_N\in\T$ are viewed as the eigenvalues of a matrix $U\in U(N)$), are exactly the irreducible characters of the unitary group $U(N)$ \cite{Weyl1946}.


\subsection{Particular cases: $e$-- and $h$--polynomials} 
\label{sub:particular_cases_e_and_h_polynomials}

Particular cases of Schur polynomials indexed by nonnegative signatures are the elementary symmetric polynomials
\begin{align}\label{e_m_particular_case}
	e_m(u_1,\ldots,u_N):=s_{(\underbrace{1,\ldots,1}_m,0,\ldots,0)}(u_1,\ldots,u_N)
\end{align}
and the complete homogeneous symmetric polynomials
\begin{align}\label{h_m_particular_case}
	h_m(u_1,\ldots,u_N):=s_{(m,0,\ldots,0)}(u_1,\ldots,u_N).
\end{align}
By agreement, in $N$ variables we have
\begin{align*}
	e_0=1,&\qquad e_{-1}=e_{-2}=\ldots=0,\qquad e_{N+1}=e_{N+2}=\ldots=0;\\
	h_0=1,&\qquad h_{-1}=h_{-2}=\ldots=0.
\end{align*}
The generating functions of the $e_m$'s and the $h_m$'s are given by (we write them in these forms for later convenience)
\begin{align}
	\sum\nolimits_{i=0}^{N}w^{N-i}(-1)^{i}e_i(u_1,\ldots,u_N)&=
	\prod\nolimits_{r=1}^{N}(w-u_r);
	\label{e_f_genf}
	\\
	\label{h_f_genf}
	\sum\nolimits_{i=0}^{\infty}
	t^{i}
	h_i(u_1,\ldots,u_N)&=
	\prod\nolimits_{r=1}^{N}\frac{1}{1-u_rt}.
\end{align}


\subsection{Branching} 
\label{sub:branching}

Having an irreducible character $s_\nu(u_1,\ldots,u_N)$ of the unitary group $U(N)$, one can restrict it to the subgroup $U(N-1)$ (see \S \ref{sub:representation_theoretic_meaning}) and represent it as a linear combination of irreducible characters of $U(N-1)$. This leads to the following \emph{branching rule} \cite{Weyl1946}:
\begin{align}\label{branching_rule}
	s_\nu(u_1,\ldots,u_{N-1};u_N=1)=
	\sum\nolimits_{\mu\in\GT_{N-1}\colon\mu\prec\nu}
  s_{\mu}(u_1,\ldots,u_{N-1}),
\end{align}
where the sum is taken over all signatures $\mu$ which interlace with $\nu$ (\ref{interlace}). In fact, a more general formula takes place:
\begin{align*}
  s_\nu(u_1,\ldots,u_{N-1},u_N)=
  \sum\nolimits_{\mu\in\GT_{N-1}\colon\mu\prec\nu}
  s_{\mu}(u_1,\ldots,u_{N-1})u_N^{|\nu|-|\mu|}.
\end{align*}
Continuing expansion for $u_{N-1},u_{N-2},\ldots,u_{1}$, we arrive at the following \emph{combinatorial formula for the Schur polynomials}:
\begin{equation}\label{schur_combinatorial_formula}
  s_\nu(u_1,\ldots,u_N)=
  \sum_{\varnothing
  \prec\nu^{(1)}\prec \ldots\prec\nu^{(N)}=\nu}
  u_1^{|\nu^{(1)}|}u_2^{|\nu^{(2)}|-|\nu^{(1)}|}
  \ldots
  u_N^{|\nu^{(N)}|-|\nu^{(N-1)}|},
\end{equation}
where the sum is taken over all Gelfand-Tsetlin schemes with fixed top row $\nu^{(N)}=\nu\in\GT_N$.


\subsection{Number of triangular Gelfand--Tsetlin schemes} 
\label{sub:number_of_triangular_gelfand_tsetlin_schemes}

Combinatorial formula (\ref{schur_combinatorial_formula}) readily implies that the number $\Dim_N\nu$ of triangular Gelfand--Tsetlin schemes with top row $\nu\in\GT_N$ (\S\S \ref{sub:signatures_and_gelfand_tsetlin_schemes}--\ref{sub:gelfand_tsetlin_graph}) is equal to 
\begin{align}\label{Dim_s_nu_at_ones}
	\Dim_N\nu=s_\nu(\underbrace{1,\ldots,1}_{N}) 
	\quad\mbox{for all $N=1,2,\ldots$, and $\nu\in\GT_N$}.
\end{align}
This number can be computed via its $q$-deformation using (\ref{schur_polynomial}):
\begin{align*}
	s_\nu(1,q,\ldots,q^{N-1})=
	\frac{\det[q^{(i-1)(\nu_j+N-j)}]_{i,j=1}^{N}}
	{\det[q^{(i-1)(N-j)}]_{i,j=1}^{N}}
	=
	\frac{V(q^{\nu_N-N},\ldots,q^{\nu_1-1})}{V(q^{-1},\ldots,q^{-N})},
\end{align*}
where $V(\cdot)$ is the Vandermonde determinant (\ref{Vandermonde}). Taking the $q\nearrow1$ limit above, we arrive at the following product formula:
\begin{align}\label{Dim_N_product}
	\Dim_N\nu=
	\frac{V({\nu_1-1},\ldots,{\nu_N-N})}{V({-1},\ldots,{-N})}
	=\prod_{1\le i<j\le N}\frac{\nu_i-\nu_j+j-i}{j-i}.
\end{align}

\begin{remark}
	In fact, as follows from (\ref{schur_combinatorial_formula}), the normalizing constant in (\ref{q_measure}) is given by 
	\begin{align}\label{s_nu_q}
		\q\Dim_N\nu=s_\nu(1,q,\ldots,q^{N-1})
		=\prod_{1\le i<j\le N}\frac{q^{\nu_i-i}-q^{\nu_j-j}}{q^{-i}-q^{-j}}.
	\end{align}
\end{remark}


\subsection{Skew polynomials} 
\label{sub:skew_polynomials}

For two signatures $\ka\in\GT_K$ and $\nu\in\GT_N$, $K<N$, define the \emph{skew Laurent--Schur polynomial} by the following combinatorial formula:
\begin{align}&
	\label{skew_schur_combinatorial_formula}
	s_{\nu/\ka}(u_{K+1},\ldots,u_N)\\&\hspace{15pt}:=
  \sum_{\ka=\nu^{(K)}
  \prec\nu^{(K+1)}\prec \ldots\prec\nu^{(N)}=\nu}
  u_{K+1}^{|\nu^{(K+1)}|-|\nu^{(K)}|}u_{K+2}^{|\nu^{(K+2)}|-|\nu^{(K+1)}|}
  \ldots
  u_N^{|\nu^{(N)}|-|\nu^{(N-1)}|},
  \nonumber
\end{align}
where the sum is taken over all Gelfand--Tsetlin schemes (of depth $N-K+1$) of trapezoidal shape with fixed top row $\nu\in\GT_N$ and bottom row $\ka\in\GT_K$. The skew polynomial $s_{\nu/\ka}$ is also a homogeneous Laurent polynomial, it has degree $|\nu|-|\ka|$. When all the parts of the signatures $\nu$ and $\ka$ are nonnegative, this is an ordinary (not Laurent) polynomial in $u_{K+1},\ldots,u_N$. In particular, $s_{\nu/\varnothing}=s_\nu$.

There is an identity which readily follows from (\ref{schur_combinatorial_formula}) and (\ref{skew_schur_combinatorial_formula}):
\begin{align}\label{Macdonald_identity}
	s_\nu(u_1,\ldots,u_N)=
	\sum_{\ka\in\GT_K}
	s_\ka(u_1,\ldots,u_K)
	s_{\nu/\ka}(u_{K+1},\ldots,u_N), \quad\nu\in\GT_N,
\end{align}
where $K<N$ is arbitrary and fixed, and the sum is taken over all signatures $\ka\in\GT_K$. Observe that this sum is actually finite. 

Formula (\ref{skew_schur_combinatorial_formula}) readily implies that the relative dimension $\Dim_{K,N}(\ka,\nu)$ (= number of trapezoidal Gelfand--Tsetlin schemes, \S \ref{sub:gelfand_tsetlin_graph}) is given in terms of skew Schur polynomials as
\begin{align}\label{skew_Dim_s_nu_at_ones}
	\Dim_{K,N}(\ka,\nu)=
	s_{\nu/\ka}(\underbrace{1,\ldots,1}_{N-K}),\qquad
	\nu\in\GT_N,\quad
	\ka\in\GT_K,\quad
	K<N.
\end{align}
This is a generalization of formula (\ref{Dim_s_nu_at_ones}) above.


\subsection{Jacobi--Trudi identities} 
\label{sub:jacobi_trudi_identities}

It is known that every symmetric (ordinary, not Laurent) polynomial can be expressed as a polynomial in the elementary symmetric polynomials $e_m$. The following \emph{Jacobi--Trudi identity} provides an explicit expression of this sort for the skew Schur polynomial $s_{\nu/\ka}$ ($\nu\in\GT_N$, $\ka\in\GT_K$), if all the parts of the signatures $\nu$ and $\ka$ are nonnegative:
\begin{align*}
	s_{\nu/\ka}(u_{K+1},\ldots,u_N)=\det\big[
	e_{\nu_i'-\ka_j'+j-i}(u_{K+1},\ldots,u_N)
	\big]_{i,j=1}^{\ell},
\end{align*}
where $\ka'$ and $\nu'$ are the transposed Young diagrams, and $\ell$ is any sufficiently large number. By agreement, we always append nonnegative signatures by zeroes: $\nu=(\nu_1,\ldots,\nu_N,0,0,\ldots)$, and $\ka=(\ka_1,\ldots,\ka_K,0,0,\ldots)$.

There is also a dual identity (again for nonnegative signatures $\ka$ and $\nu$):
\begin{align}\label{dual_Jacobi_Trudi}
	s_{\nu/\ka}(u_{K+1},\ldots,u_N)=\det\big[
	h_{\nu_i-\ka_j+j-i}(u_{K+1},\ldots,u_N)
	\big]_{i,j=1}^{N},
\end{align}
which has advantages that it involves the signatures $\ka$ and $\nu$ themselves, and not their transpositions, and also that the size of the determinant can be taken equal to $N$ (one can take any larger size as well).

A special case of (\ref{dual_Jacobi_Trudi}) is a formula for the ordinary Schur polynomials
\begin{align}\label{nonskew_dual_Jacobi_Trudi}
	s_{\nu}(u_{1},\ldots,u_N)=\det\big[
	h_{\nu_i+j-i}(u_{1},\ldots,u_N)
	\big]_{i,j=1}^{N},\qquad
	\nu\in\GT_N,\quad \nu_N\ge0.
\end{align}


\subsection{Interlacing arrays} 
\label{sub:interlacing_arrays}

When working with signatures and Gelfand--Tsetlin sche\-mes, it is sometimes useful to introduce shifted coordinates and regard every signature as a configuration of distinct particles on the integer lattice. Namely, let $\nu^{(1)}\prec \ldots\prec \nu^{(N)}$ be a Gelfand--Tsetlin scheme. Set
\begin{align*}
	\x^{m}_{j}:=\nu^{(m)}_j-j,\qquad
	m=1,\ldots,N,\quad j=1,\ldots,m.
\end{align*}
The array $\{\x^{m}_{j}\}$ satisfies the \emph{interlacing constraints}
\begin{align}\label{interlacing_particles}
	\x_{j+1}^{m}<\x_{j}^{m-1}\le \x_{j}^{m}
\end{align}
(for all $j$'s and $m$'s for which these inequalities can be written out), cf. \S \ref{sub:signatures_and_gelfand_tsetlin_schemes} and Fig.~\ref{fig:GT_scheme}. The configuration of particles at positions $(\x_{j}^{m},m)$ on the two-dimensional integer lattice can be interpreted as a lozenge tiling, see Fig.~\ref{fig:tiling_particles}.


\subsection{Skew polynomials in one variable} 
\label{sub:skew_polynomials_in_one_variable}

Let us consider the case when there is only one variable, say, $u$. Then we have (here and below $1_{\{\cdot\cdot\cdot\}}$ denotes the indicator of a set)
\begin{align}\label{h_one_variable}
	h_m(u)=1_{m\ge0}\cdot u^{m}, \qquad m\in\Z,
\end{align}
and for any two signatures $\nu\in\GT_N$, $\ka\in\GT_K$ (cf. \cite[(7)]{Borodin2010Schur}):
\begin{align}\label{skew_in_one_variable_power}
	s_{\nu/\ka}(u)=\begin{cases}
		u^{|\nu|-|\ka|},&\mbox{if $\ka\prec\nu$},\\
		0,&\mbox{otherwise}.
	\end{cases}
\end{align}
We see that $s_{\nu/\ka}(u)$ vanishes unless $K=N-1$ and the signature $\ka$ interlaces with $\nu$ as in (\ref{interlace}). 

We have the following determinantal formula (for \emph{nonnegative} signatures $\ka$ and~$\nu$; recall that we append them by zeroes) which follows from the Jacobi--Trudi identity (\ref{dual_Jacobi_Trudi}) and from (\ref{h_one_variable})
\begin{align}
	s_{\nu/\ka}(u)=\det\big[1_{\nu_i-i\ge\ka_j-j}\cdot u^{(\nu_i-i)-(\ka_j-j)}\big]_{i,j=1}^{N}.
	\label{skew_one_variable_positive}
\end{align}

Let us introduce a variant of this determinantal formula which works for arbitrary (not necessary nonnegative) signatures. It is based on the idea of \emph{virtual particles}, e.g., see \cite[\S4]{Borodin2009}, and also \cite{BorodinKuan2007U}. Similar idea was employed in \cite[\S4]{Petrov2012}. Let $\nu^{(N)}\in\GT_N$, and $\nu^{(N-1)}\in\GT_{N-1}$. Consider the particles $\x^{N}_{j},\x^{N-1}_j$ as in (\S \ref{sub:interlacing_arrays}). Let, by agreement, $\x^{N-1}_{N}=virt$ be a virtual particle. Denote 
\begin{align}\label{xi}
	\xi_u(x,y):=
	u^{y-x}1_{x\le y}+u^y1_{x=virt},\qquad
  x\in\{virt\}\cup\Z,\quad y\in\Z.
\end{align}
Informally, if $|u|<1$, one can think that $virt=-\infty$. Then 
\begin{align}\label{skew_one_variable_det_any}
	s_{\nu^{(N)}/\nu^{(N-1)}}(u)=u^{N}\cdot
	\det\big[\xi_{u}(\x_{i}^{N-1},\x_{j}^{N})\big]_{i,j=1}^{N}.
\end{align}
This determinantal formula follows from (\ref{skew_in_one_variable_power}). The additional factor $u^N$ comes form the fact that
\begin{align*}
	\sum\nolimits_{j=1}^{N}\x^{N}_{j}-
	\sum\nolimits_{i=1}^{N-1}\x^{N-1}_{i}=
	|\nu^{(N)}|-|\nu^{(N-1)}|-N.
\end{align*}

From (\ref{skew_schur_combinatorial_formula}) and (\ref{skew_one_variable_det_any}) it follows that the skew Schur polynomial $s_{\nu/\ka}$ for any $\nu$ and $\ka$ can be written as a sum of products of determinants. We use such a formula in our proof of Theorem \ref{thm:q=1_main_formula} in the next section, and also in \S \ref{sec:_q_generalization_theorem_thm:q_main_formula_}.



\section{Number of trapezoidal Gelfand--Tsetlin schemes\\(proof of Theorem \ref{thm:q=1_main_formula})} 
\label{sec:proof_of_theorem_thm:q=1_main_formula}

\subsection{Cauchy--Binet formula} 
\label{sub:cauchy_binet_formula}

Let us recall the well-known Cauchy--Binet formula in a form convenient for us. Let $A(x,y)$ and $B(y,z)$ be $\Z\times\Z$ matrices, and $AB$ is their product (assume that it is well-defined). Then for any two ordered $N$-tuples of integer indices $x_1>\ldots>x_N$ and $z_1>\ldots>z_N$, one has
\begin{align}\label{Cauchy_Binet}
	\det[(AB)(x_i,z_k)]_{i,k=1}^{N}=
	\sum_{y_1>\ldots>y_N}
	\det[A(x_i,y_j)]_{i,j=1}^{N}
	\det[B(y_j,z_k)]_{j,k=1}^{N},
\end{align}
where the sum is taken over all ordered $N$-tuples $y_1>\ldots>y_N$.


\subsection{Inverse Vandermonde matrix} 
\label{sub:inverse_vandermonde_matrix}

Here we discuss the inverse Vandermonde matrix --- an object that turns out to be very useful in our argument. This matrix was also used to obtain the determinantal kernel in \cite{Petrov2012}.

Let $a_1>\ldots>a_N$ be some points which we call \emph{nodes}, and consider the Vandermonde matrix $\Vs(a):=[a_i^{N-j}]_{i,j=1}^{N}$. Clearly,
\begin{align*}
	\det \Vs(a)=V(a_1,\ldots,a_N)=\prod\nolimits_{1\le i<j\le N}(a_i-a_j)\ne0.
\end{align*}
Let $\Vs(a)^{-1}$ be the inverse of the Vandermonde matrix.
\begin{proposition}\label{prop:inverse_V}
	The elements of the inverse Vandermonde matrix can be written as double contour integrals as follows:
	\begin{align*}
		[\Vs(a)^{-1}]_{ij}=\frac{1}{(2\pi\i)^{2}}
		\oint_{\cont(a_j)}dz
		\oint_{\cont(\infty)}
		\frac{dw}{w^{N+1-i}}
		\frac{1}{w-z}\prod_{r=1}^{N}\frac{w-a_r}{z-a_r},
		\qquad i,j=1,\ldots,N.
	\end{align*}
	Here $\cont(a_j)$ is any small positively oriented contour around $a_j$; the positively oriented contour $\cont(\infty)$ in $w$ contains $\cont(a_j)$ (without intersecting it) and is sufficiently large.
\end{proposition}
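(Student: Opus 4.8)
The plan is to prove the contour-integral formula for $[\Vs(a)^{-1}]_{ij}$ by verifying directly that the proposed matrix is a two-sided inverse of $\Vs(a)$, or more economically, by evaluating the double integral explicitly and checking it agrees with the classical Lagrange-interpolation formula for the inverse Vandermonde matrix. I would proceed by direct evaluation of the integral, since the residue structure is transparent.

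First I would analyze the inner integral in $w$. The contour $\cont(\infty)$ encircles $\cont(a_j)$ and hence the point $z$ (which lies on $\cont(a_j)$), as well as all the nodes $a_1,\dots,a_N$, but the integrand $\tfrac{1}{w^{N+1-i}(w-z)}\prod_r (w-a_r)$ has poles only at $w=z$ and, when $N+1-i>0$, at $w=0$. Since $\cont(\infty)$ is large and positively oriented, I can instead evaluate the $w$-integral as minus the residue at infinity (or equivalently compute the residues at $w=z$ and $w=0$). The residue at $w=z$ contributes $\tfrac{1}{z^{N+1-i}}\prod_r(z-a_r)$, and this term, once multiplied by the remaining factor $\prod_r (z-a_r)^{-1}$ and integrated in $z$ around $a_j$, produces $\oint_{\cont(a_j)} z^{i-N-1}\,dz/(2\pi\i)$, which vanishes for $i\le N$ because $z=a_j\ne 0$ is the only enclosed singularity and $z^{i-N-1}$ is regular there. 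So the surviving contribution comes from the interplay between the $w=z$ residue and the pole of $\prod_r (z-a_r)^{-1}$ at $z=a_j$.

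More carefully, after doing the $w$-integral the expression collapses to a single $z$-integral whose integrand is a rational function with a simple pole at $z=a_j$ coming from $\prod_r (z-a_r)^{-1}$. Taking the residue at $z=a_j$ then yields a closed-form rational expression in the nodes. The key step is to recognize that the resulting quantity equals the $(i,j)$ entry of $\Vs(a)^{-1}$, which by the Lagrange interpolation description is $[\Vs(a)^{-1}]_{ij} = e_{N-i}(a_1,\dots,\widehat{a_j},\dots,a_N)\cdot(-1)^{\,?}\big/\prod_{r\ne j}(a_j-a_r)$, up to the standard sign; here $e_{N-i}$ of the nodes with $a_j$ omitted arises precisely as the coefficient extracted by the power $w^{-(N+1-i)}$ via the generating function $(\ref{e_f_genf})$. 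I would make this identification by expanding $\prod_{r\ne j}(w-a_r)$ in elementary symmetric polynomials and matching the coefficient of $w^{i-1}$.

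The main obstacle I anticipate is bookkeeping of the poles and signs: one must be careful that $\cont(\infty)$ encircles $z$ so that the $w=z$ residue is genuinely picked up, that the orientation conventions give the correct overall sign, and that the spurious contribution from $w=0$ (present only when $i<N$) is correctly cancelled or shown to integrate to zero in $z$. A clean way to sidestep part of this is to argue by the residue-at-infinity viewpoint for the $w$-integral, which automatically accounts for all finite poles at once and isolates the single relevant contribution. Once the explicit rational form is obtained, verifying it is the inverse Vandermonde entry is a routine appeal to the Lagrange interpolation formula, so the analytic extraction of residues is where the real care is needed rather than in the final algebraic identification.
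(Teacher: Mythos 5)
Your overall strategy --- evaluate the double integral by residues and match the result against the Lagrange/cofactor formula for $\Vs(a)^{-1}$ --- is sound, and it is essentially the paper's proof read in the opposite direction (the paper starts from the cofactor formula (\ref{V_inverse_e_i}), writes the elementary symmetric polynomial as the single $w$-integral (\ref{Inverse_V_Lagrange}) via (\ref{e_f_genf}), and then recognizes the double integral). However, the residue bookkeeping you describe is backwards, and followed literally it would produce $0$. If you integrate in $w$ first over $\cont(\infty)$, the finite poles are $w=z$ and $w=0$. As you correctly compute, the residue at $w=z$ equals $z^{-(N+1-i)}\prod_r(z-a_r)$, which cancels the factor $\prod_r(z-a_r)^{-1}$ exactly; hence this term has \emph{no} pole at $z=a_j$ and contributes nothing (for $a_j\ne0$). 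Your next sentence, attributing the surviving contribution to ``the interplay between the $w=z$ residue and the pole at $z=a_j$,'' contradicts what you have just shown, and your final paragraph declares the $w=0$ contribution ``spurious'' and to be shown to vanish. It is the other way around: the residue at $w=0$ (a pole of order $N+1-i\ge 1$, present for every $i\le N$, not only for $i<N$) carries the \emph{entire} answer. There is also an index slip: the integral $\oint \frac{dw}{w^{N+1-i}}(\cdots)$ extracts the coefficient of $w^{N-i}$, not of $w^{i-1}$, so the relevant cofactor is $(-1)^{i-1}e_{i-1}(a_1,\ldots,\widehat{a_j},\ldots,a_N)$ as in (\ref{V_inverse_e_i}), not $e_{N-i}$. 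Finally, in the paper's application the nodes are the integers $\nu_r-r$, so $a_j=0$ is possible and your dismissal of the $w=z$ term would need a genericity or continuity remark.

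All of this is repaired at once by integrating in $z$ first, which is what the paper's argument amounts to: the only pole inside $\cont(a_j)$ is the simple pole at $z=a_j$, whose residue turns $\frac{1}{w-z}\prod_{r}\frac{w-a_r}{z-a_r}$ into $\prod_{r\ne j}\frac{w-a_r}{a_j-a_r}$; the remaining single $w$-integral is exactly (\ref{Inverse_V_Lagrange}), and the generating function (\ref{e_f_genf}) identifies it with (\ref{V_inverse_e_i}). Alternatively, your residue-at-infinity suggestion for the inner integral also works cleanly --- it yields $\sum_{k+l=i-1}(-1)^k e_k(a)\,z^{l}$, and the subsequent $z$-residue at $a_j$ produces the coefficient of $w^{N-i}$ in $\prod_{r\ne j}(w-a_r)$ --- but then you must not separately discard the $w=0$ term, since the residue at infinity already accounts for both finite poles.
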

\begin{proof}
	Using the elementary symmetric polynomials $e_m$ (\S \ref{sub:particular_cases_e_and_h_polynomials}), one can write
  \begin{align}\label{V_inverse_e_i}
    [\Vs(a)^{-1}]_{ij}=(-1)^{i-1}
    \frac{e_{i-1}(a_1,\ldots,{a_{j-1}},a_{j+1},\ldots,a_N)}
    {\prod_{r\ne j}(a_j-a_r)}.
  \end{align}
  Indeed, this formula follows from the fact that every cofactor of the Vandermonde matrix $\Vs(a)$ can be identified with the numerator in the right-hand side of (\ref{schur_polynomial}) with $\nu$ of the form $(1^{m})=(1,\ldots,1)$ ($m$~ones) for some $m$ (cf. (\ref{e_m_particular_case})).

  Using (\ref{e_f_genf}), we have
  \begin{align}\nonumber
  	[\Vs(a)^{-1}]_{ij}&=
  	\frac{(-1)^{i-1}e_{i-1}(a_1,\ldots,{a_{j-1}},a_{j+1},\ldots,a_N)}
  	{\prod_{r\ne j}(a_j-a_r)}\\&=
  	\frac{1}{2\pi\i}\oint_{\cont(\infty)}
  	\frac{dw}{w^{N+1-i}}\prod_{r\ne j}\frac{w-a_r}{a_j-a_r}.
  	\label{Inverse_V_Lagrange}
  \end{align}
  Then it is not hard to see that (\ref{Inverse_V_Lagrange}) is the same as the claim of the proposition.
\end{proof}
Products of the inverse Vandermonde matrix with certain column vectors are readily computed in a closed form:
\begin{proposition}[Summation formula]\label{prop:Vandermonde_summation}
	Let $f$ be any polynomial of degree not exceeding $N-1$. Then we have
	\begin{align*}
		\sum_{j=1}^{N}[\Vs(a)^{-1}]_{ij}f(a_j)=
		[w^{N-i}]f(w),\qquad i=1,\ldots,N,
	\end{align*}
	where $[w^{N-i}](\cdots)$ means the coefficient by $w^{N-i}$ in $f(w)$. In other words, 
	\begin{align*}
		[w^{N-i}]f(w)=\frac{1}{2\pi\i}\oint_{\cont(\infty)}
		\frac{f(w)}{w^{N+1-i}}dw.
	\end{align*}
\end{proposition}
\begin{proof}
	Using (\ref{Inverse_V_Lagrange}), we see that 
	\begin{align*}
		\sum_{j=1}^{N}[\Vs(a)^{-1}]_{ij}f(a_j)=
		\frac{1}{2\pi\i}\oint_{\cont(\infty)}
		\frac{dw}{w^{N+1-i}}
		\sum_{j=1}^{N}f(a_j)\prod_{r\ne j}\frac{w-a_r}{a_j-a_r}.
	\end{align*}
	The sum over $j$ under the integral is the Lagrange interpolation polynomial of degree $N-1$ with $N$ nodes $a_1,\ldots,a_N$. Since $f$ is a polynomial of degree $\le N-1$, the interpolation is exact and the sum is simply equal to $f(w)$. This concludes the~proof.
\end{proof}


\subsection{First determinantal formula} 
\label{sub:first_determinantal_formula}

The goal of this subsection is to obtain a $K\times K$ determinantal formula for the quantity $\frac{\Dim_{K,N}(\ka,\nu)}{\Dim_N\nu}$ as in (\ref{skew_dim_det_A}) but first with a different (more complicated) kernel. Then in \S \ref{sub:linear_transform} we explain how to transform that formula into the desired claim of Theorem \ref{thm:q=1_main_formula}.

\begin{proposition}\label{prop:skew_dim_det_A_first}
	For any $1\le K <N$, $\ka\in\GT_K$, and $\nu\in\GT_N$, we have 
	\begin{align}\label{skew_dim_det_A_first}
		\frac{\Dim_{K,N}(\ka,\nu)}{\Dim_N\nu}
		=
		(N-1)!\ldots (N-K)!\cdot 
	\det[\tilde A_i(\ka_j-j)]_{i,j=1}^{K},
	\end{align}
	where
	\begin{align}
		\label{A_tilde}
		\tilde A_i(x)&=\tilde A_i(x\mid K,N,\nu)
		:=
		\frac{1}{(2\pi\i)^{2}}
		\oint_{\Cont(x)}dz
		\oint_{\cont(\infty)}
		\frac{dw}{w^{N+1-i}}\times\\&\hspace{110pt}\times
		\frac{(z-x+1)_{N-K-1}}{(N-K-1)!}
		\frac{1}{w-z}\prod_{r=1}^{N}\frac{w-\nu_r+r}{z-\nu_r+r}.
		\nonumber
	\end{align}
	Here the positively oriented contour $\Cont(x)$ in $z$ encircles points $x,x+1,\ldots,\nu_1-1$, and not $x-1,x-2,\ldots,\nu_N-N$ (this contour is the same as in Theorem \ref{thm:q=1_main_formula}). The positively oriented contour $\cont(\infty)$ in $w$ contains $\Cont(x)$ (without intersecting it) and is sufficiently large.
\end{proposition}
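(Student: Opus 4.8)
The plan is to start from the skew Schur polynomial evaluated at all ones, $\Dim_{K,N}(\ka,\nu)=s_{\nu/\ka}(1,\ldots,1)$ (see \eqref{skew_Dim_s_nu_at_ones}), and expand $s_{\nu/\ka}$ as an iterated sum of single-variable skew polynomials using the combinatorial/branching formula \eqref{skew_schur_combinatorial_formula}. The key device is the virtual-particle determinantal formula \eqref{skew_one_variable_det_any}, which writes each single-variable factor $s_{\nu^{(m)}/\nu^{(m-1)}}(u)$ as a $\Z\times\Z$-style determinant of the kernel $\xi_u$ from \eqref{xi}. Setting all $u=1$ turns the product of these single-variable skew polynomials into an iterated product of determinants indexed by the intermediate signatures $\nu^{(K)}=\ka\prec\nu^{(K+1)}\prec\cdots\prec\nu^{(N)}=\nu$, where each level carries $N$ particles (one of which is virtual at each intermediate step). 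The summation over the intermediate rows is then exactly a chain of Cauchy--Binet summations \eqref{Cauchy_Binet}, collapsing the $N-K$ products of determinants into a single $N\times N$ determinant whose $(i,j)$ entry is an $(N-K)$-fold convolution of the elementary kernel $\xi_1$ between the bottom row $\ka$ (with its virtual particle) and the top row $\nu$.

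Next I would evaluate that convolution in closed form. The kernel $\xi_1(x,y)=1_{x\le y}$ (plus the virtual-particle term) convolved $(N-K)$ times produces a binomial-type count $\binom{y-x+N-K-1}{N-K-1}$ on the real particles, which is precisely the rising factorial $(z-x+1)_{N-K-1}/(N-K-1)!$ appearing in \eqref{A_tilde} once one passes to the contour-integral generating function; the virtual-particle contributions supply the correct entries in the last $N-K$ columns. I would write each resulting matrix entry as a contour integral in $z$ around the relevant poles $\nu_r-r$ (equivalently around the points $x,x+1,\ldots,\nu_1-1$), which explains the contour $\Cont(x)$. The normalization $\Dim_N\nu=s_\nu(1,\ldots,1)$, which by \eqref{Dim_N_product} is a ratio of Vandermonde determinants $V(\nu_1-1,\ldots,\nu_N-N)/V(-1,\ldots,-N)$, must now be divided out.

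This is where the inverse Vandermonde matrix enters and where I expect the main bookkeeping to lie. The $N\times N$ determinant from the Cauchy--Binet collapse will have $K$ ``data'' columns depending on $\ka$ and $N-K$ ``structural'' columns coming from the top row $\nu$ and the virtual particles; dividing by $\Dim_N\nu$ amounts to multiplying by $\Vs(a)^{-1}$ with nodes $a_r=\nu_r-r$. Using Proposition \ref{prop:Vandermonde_summation}, the $N-K$ structural columns should be absorbed against the inverse Vandermonde to leave a pure $K\times K$ determinant in the $\ka$-columns, at the cost of the combinatorial prefactor $(N-1)!\cdots(N-K)!$. Introducing the auxiliary variable $w$ via Proposition \ref{prop:inverse_V} converts the remaining single integrals into the double contour integral of \eqref{A_tilde}, with the factor $1/(w-z)\prod_r (w-\nu_r+r)/(z-\nu_r+r)$ being exactly the inverse-Vandermonde integrand.

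The hard part will be tracking the reduction from the $N\times N$ determinant to the $K\times K$ determinant: one must verify that the contribution of the $N-K$ structural columns, after pairing with the inverse Vandermonde rows via the summation formula, factors out cleanly as the scalar $(N-1)!\cdots(N-K)!$ and does not entangle with the $\ka$-dependent columns. Controlling the virtual-particle terms through the Cauchy--Binet chain --- making sure they occupy exactly the right columns at each stage and produce the correct $w^{-(N+1-i)}$ weight rather than spurious boundary terms --- is the delicate step; the rest is generating-function manipulation that \eqref{e_f_genf}, \eqref{h_f_genf}, and the contour representations make routine.
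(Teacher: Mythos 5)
Your overall strategy --- the combinatorial expansion \eqref{skew_schur_combinatorial_formula}, the virtual-particle determinants \eqref{skew_one_variable_det_any}, iterated Cauchy--Binet, the inverse Vandermonde matrix, and the final passage to double contour integrals via Proposition \ref{prop:inverse_V} --- is exactly the paper's. However, the order in which you propose to carry it out contains a genuine gap. You want first to collapse the chain of $\xi_1$-determinants into a single $N\times N$ determinant between the augmented bottom row and the top row, and only then bring in $\Vs(\nu_1-1,\ldots,\nu_N-N)^{-1}$. This collapse cannot be performed as stated: the determinants in the chain have sizes $K+1,K+2,\ldots,N$ (each level $m$ carries only \emph{one} virtual particle in \eqref{skew_one_variable_det_any}), so consecutive factors are matrices of different sizes and the Cauchy--Binet identity \eqref{Cauchy_Binet} does not apply to them. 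Worse, the object you want as the matrix entry --- the $(N-K)$-fold convolution of $\xi_1$ started from a virtual particle --- is divergent: already $(\xi_1*\xi_1)(virt,y)=\sum_{z\le y}1=\infty$ with the kernel \eqref{xi}. Repairing this bottom-up would require a multi-virtual-particle version of \eqref{skew_one_variable_det_any}, with $N-m$ virtual particles at level $m$ and a regularized kernel between them, which you neither state nor prove.

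The paper resolves both problems by reversing the order: it attaches the inverse-Vandermonde determinant $\det[\psi_i(\x_j^{N}\mid N)]_{i,j=1}^{N}$ (see \eqref{psi_q=1}--\eqref{one_over_dim_delta}) at the \emph{top} of the chain and sums over $\x^{N},\x^{N-1},\ldots,\x^{K+1}$ downward. After each Cauchy--Binet step the vanishing property (Lemma \ref{lemma:vanishing}) shows that the column of the resulting matrix indexed by the virtual particle is zero except for a single entry, so the determinant shrinks by one and the next step again pairs two determinants of equal size; moreover every convolution $\psi_i(\cdot\mid m)$ is a finite sum because $\psi_i(\cdot\mid N)$ has finite support. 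Lemma \ref{lemma:vanishing} (proved via Proposition \ref{prop:Vandermonde_summation} and the exactness of Lagrange interpolation for polynomials of degree $\le N-1$) is also the mechanism that produces the prefactor $(N-1)!\cdots(N-K)!$; your proposal correctly guesses that the structural columns ``factor out cleanly'' but supplies no argument for it. If you reorganize the argument in the paper's top-down order and prove the vanishing lemma, the rest of your outline (the closed form of $\xi_1^{*(N-K)}$ on real particles, and the $w$-integral coming from Proposition \ref{prop:inverse_V}) goes through as you describe.
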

The rest of this subsection is devoted to proving Proposition \ref{prop:skew_dim_det_A_first}.

\smallskip
\noindent\textbf{Step 1.} Fix $N$ and $\nu\equiv\nu^{(N)}\in\GT_N$. Using (\ref{Dim_s_nu_at_ones}), we write
\begin{align*}
	\frac{1}{\Dim_N\nu}=V(-1,\ldots,-N)
	\cdot
	\det \Big[[\Vs(\nu_1-1,\ldots,\nu_N-N)^{-1}]_{ij}\Big]_{i,j=1}^{N},
\end{align*}
where $V(\cdot)$ is the Vandermonde determinant (\ref{Vandermonde}), and we also use the inverse Vandermonde matrix (\S \ref{sub:inverse_vandermonde_matrix}) with nodes $\nu_1-1>\ldots>\nu_N-N$.

Define the following functions on $\Z$ ($i=1,\ldots,N$):
\begin{align}\label{psi_q=1}
	\psi_i(x\mid N)=\psi_i(x\mid N,N,\nu):=
	\sum_{j=1}^{N}
	1_{x=\nu_j-j}\cdot[\Vs(\nu_1-1,\ldots,\nu_N-N)^{-1}]_{ij}.
\end{align}
An obvious but useful observation is that for any integers $y_1>\ldots>y_N$, we have
\begin{align}
	V(-1,\ldots,-N)\cdot\det[\psi_i(y_j\mid N)]_{i,j=1}^{N}=
  \frac{1_{y_1=\nu_1-1}\ldots 1_{y_N=\nu_N-N}}
  {\Dim_N\nu}.\label{one_over_dim_delta}
\end{align}

\smallskip
\noindent\textbf{Step 2.} Let us express the relative dimension through the skew Schur polynomial: $\Dim_{K,N}(\ka,\nu)=s_{\nu/\ka}(1,\ldots,1)$ ($N-K$ ones, see (\ref{skew_Dim_s_nu_at_ones})). Using (\ref{skew_schur_combinatorial_formula}) and (\ref{skew_one_variable_det_any}), we rewrite this quantity as 
\begin{align}\label{sum_of_determinants}
	s_{\nu/\ka}(1,\ldots,1)=
	\sum_{\x^{K+1},\ldots,\x^{N-1}}
	\det[\xi_{1}(\x_{i}^{K},\x_{j}^{K+1})]_{i,j=1}^{K+1}
	\ldots
	\det[\xi_{1}(\x_{i}^{N-1},\x_{j}^{N})]_{i,j=1}^{N}.
\end{align}
The sum is taken over all arrays of integers $\{\x^{m}_{j}\}_{m=K+1}^{N-1}$ of depth $N-K-1$; in the notation of (\ref{skew_schur_combinatorial_formula}), $\x^{m}_{j}=\nu^{(m)}_{j}-j$. The determinants ensure interlacing of the rows $\x^{m}$ as in (\ref{interlacing_particles}), cf. (\ref{skew_in_one_variable_power}) and (\ref{skew_one_variable_det_any}). The $K$th and $N$th rows are fixed, $\x^{K}_{j}=\ka_j-j$, and the same for $\x^{N}$ and $\nu$. The matrix elements $\xi_1(\cdot,\cdot)$ are given in (\ref{xi}) (with $u=1$). By agreement, we append every row of particles $\x^{1}_{1},\ldots,\x^{m}_{m}$ by the virtual particle $\x^{m}_{m+1}=virt$ as explained in \S \ref{sub:skew_polynomials_in_one_variable}.

\smallskip
\noindent\textbf{Step 3.} Now we can write our ratio of dimensions as the following sum:
\begin{align}&
	\label{pre_Cauchy_Binet}
	\frac{\Dim_{K,N}(\ka,\nu)}{\Dim_N\nu}
	=
	V(-1,\ldots,-N)
	\sum_{\x^{K+1},\ldots,\x^{N-1},\x^{N}}
	\det[\xi_{1}(\x_{i}^{K},\x_{j}^{K+1})]_{i,j=1}^{K+1}
	\times\\&\hspace{30pt}\times
	\det[\xi_{1}(\x_{i}^{K+1},\x_{j}^{K+2})]_{i,j=1}^{K+2}
	\ldots
	\det[\xi_{1}(\x_{i}^{N-1},\x_{j}^{N})]_{i,j=1}^{N}
	\det[\psi_i(\x_j^{N}\mid N)]_{i,j=1}^{N}.
	\nonumber
\end{align}
In the above sum, $\x^{K}$ is still fixed as in Step 2, but now we can also \emph{sum over~$\x^{N}$} because of (\ref{one_over_dim_delta}). The sum in the above form is adapted to performing the Cauchy--Binet summation (\S \ref{sub:cauchy_binet_formula}), see Step 5.

\smallskip
\noindent\textbf{Step 4.} Let us define convolutions in the usual way (e.g., see \cite[\S4]{Borodin2009}):
\begin{align*}
	(f*g)(x,z):=\sum_{y\in\Z}f(x,y)g(y,z),\qquad
	(g*h)(x):=\sum_{y\in\Z}g(x,y)h(y)
\end{align*}
for any functions $f(x,y)$, $g(x,y)$, and $h(x)$. 

Let for $K<N$, 
\begin{align}
	\psi_i(x\mid K)=\psi_i(x\mid K,N,\nu):=
	\big(\underbrace{\xi_1*\ldots *\xi_1}_{N-K}{}*\psi_i(\cdot\mid N)\big)(x).
	\label{psi_i_K}
\end{align}
\begin{lemma}[Vanishing property]\label{lemma:vanishing}
	For any $K<N$ and $x\le \nu_N-K-1$, one has
	\begin{align*}
		\psi_{i}(x\mid K,N,\nu)=
		\begin{cases}
			0,&\mbox{if $i=1,\ldots,K$};\\
			1/{(N-K-1)!},
			&\mbox{if $i=K+1$}.
		\end{cases}
	\end{align*}
	Moreover, $\psi_{i}(virt\mid K,N,\nu)$, $i=1,\ldots,K+1$, is given by the same formula.
\end{lemma}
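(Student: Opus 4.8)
The plan is to compute the convolution $\psi_i(x\mid K)$ in \eqref{psi_i_K} directly, exploiting the explicit double-contour-integral form of the inverse Vandermonde entries from Proposition~\ref{prop:inverse_V}. First I would write out $\psi_i(x\mid N)$ from \eqref{psi_q=1} using that integral representation: summing over $j$ collapses the indicator $1_{x=\nu_j-j}$, so $\psi_i(x\mid N)$ becomes, for $x$ of the form $\nu_j-j$, a double contour integral whose inner $z$-contour $\cont(a_j)$ sits at the node $a_j=\nu_j-j=x$. The key point is that the particle positions $\nu_r-r$ are exactly the nodes of the Vandermonde matrix, so $\psi_i(x\mid N)$ extends to a function defined by a contour integral in $z$ that encircles the single point $x$ (when $x$ is one of the nodes) and vanishes otherwise by residue considerations.

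Next I would perform the $N-K$ convolutions with $\xi_1$. Using the kernel $\xi_1(x,y)=1_{x\le y}+1_{x=virt}$ from \eqref{xi} (at $u=1$), each convolution $\sum_y \xi_1(x,y)(\cdots)(y)$ is a partial summation in the $y$-variable. The standard trick here is that summing a function of the form $\oint \prod_r (z-a_r)^{-1}\,(\cdots)\,dz$ against $1_{x\le y}$ produces a geometric-type sum $\sum_{y\ge x} z^{\text{-power}}$ that telescopes into a factor like $(z-x+1)_{m}/m!$ times an adjusted rational function, provided the $z$-contour is positioned so the sum converges (this is why the contour $\Cont(x)$ encircles $x,x+1,\ldots,\nu_1-1$ but not the poles below). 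After $N-K$ such steps I expect to obtain a clean single-contour integral: $\psi_i(x\mid K)$ should equal a contour integral over $\Cont(x)$ of $\frac{(z-x+1)_{N-K-1}}{(N-K-1)!}$ times a ratio $\prod_r (w-\nu_r+r)/(z-\nu_r+r)$ structure, matching the shape of $\tilde A_i$ in \eqref{A_tilde}.

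To prove the vanishing claim itself, I would specialize to $x\le \nu_N-K-1$. For such small $x$, the Pochhammer factor $(z-x+1)_{N-K-1}=(z-x+1)(z-x+2)\cdots(z-x+N-K-1)$ contributes zeros at $z=x-1,x-2,\ldots,x-(N-K-1)$, i.e. precisely at the points just below $x$. Because $x\le\nu_N-K-1$ forces $x+(N-K-1)\le \nu_N-K-2<\nu_N$, combined with the fact that the relevant poles of $\prod_r(z-\nu_r+r)^{-1}$ at $z=\nu_r-r$ all lie at $x$ or below for the indices that matter, the contour $\Cont(x)$ ends up enclosing no poles for $i=1,\ldots,K$, giving $\psi_i(x\mid K)=0$. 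The exceptional value $i=K+1$ arises because the $w^{-(N+1-i)}$ factor degree-matches the polynomial degree exactly when $i=K+1$, leaving a single nonvanishing residue that evaluates to the pure constant $1/(N-K-1)!$ independent of $x$; this is the coefficient-extraction mechanism made precise by the Summation formula of Proposition~\ref{prop:Vandermonde_summation}.

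The main obstacle I anticipate is bookkeeping the contour positions through the iterated convolutions — in particular verifying that at each convolution step the infinite sum $\sum_{y\ge x}(\cdots)$ genuinely converges on the chosen contour and that no poles cross the contour as it is deformed, so that the telescoping into the Pochhammer factor is rigorous rather than formal. The virtual-particle contribution $1_{x=virt}$ in $\xi_1$ must also be tracked carefully: I would check that $\psi_i(virt\mid K)$ satisfies the same formula by confirming that the $virt$ term contributes the same contour integral as the $x\to-\infty$ limit of the ordinary term, which is consistent with the informal picture $virt=-\infty$ noted after \eqref{xi}. Establishing the precise degree count that isolates $i=K+1$ as the unique nonzero case is the delicate combinatorial heart of the lemma, and I would handle it by residue-at-infinity analysis of the $w$-integral.
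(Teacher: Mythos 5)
Your proposal contains a genuine error in the mechanism you give for the vanishing. You claim that for $i=1,\ldots,K$ ``the contour $\Cont(x)$ ends up enclosing no poles.'' This is false: $\Cont(x)$ encircles $x,x+1,\ldots,\nu_1-1$, hence all nodes $\nu_r-r\ge x$, and there are generically many of these with nonzero individual residues. The vanishing is not a ``no poles'' phenomenon but a cancellation coming from a degree count in the \emph{other} variable. You have also misidentified what the hypothesis $x\le\nu_N-K-1$ is for: it is not about positioning poles relative to the contour, but about being able to drop the indicator $1_{\nu_j-j\ge x}$ in the sum $\sum_j 1_{\nu_j-j\ge x}\,(\nu_j-j-x+1)_{N-K-1}[\Vs^{-1}]_{ij}$, because the polynomial $f(w)=(w-x+1)_{N-K-1}$ vanishes at $w=x-1,\ldots,x-N+K+1$ and the hypothesis guarantees every node $\nu_j-j<x$ lands in that set. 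Once the indicator is removed, the sum runs over all $N$ nodes and Proposition~\ref{prop:Vandermonde_summation} (exact Lagrange interpolation) gives $(N-K-1)!\,\psi_i(x\mid K)=[w^{N-i}]f(w)$; since $\deg f=N-K-1$, this is $0$ for $i\le K$ and $1$ for $i=K+1$. Your closing remarks about ``coefficient extraction'' and ``residue-at-infinity analysis of the $w$-integral'' do point at this correct mechanism, but they contradict the explicit (and incorrect) argument you give for $i\le K$, and your proof as written would not go through.

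A secondary point: the paper's route is substantially lighter than the one you sketch. Instead of tracking contours through $N-K$ convolutions, it computes $\xi_1^{*(N-K)}(x,y)=1_{x\le y}\binom{N-K-1+y-x}{N-K-1}$ in one line, which immediately produces the Pochhammer factor; and because $\psi_i(\cdot\mid N)$ is supported on the finite set of nodes, the sum $\sum_{y\ge x}$ is finite, so the convergence and contour-deformation issues you flag as the main obstacle simply do not arise. Contour integrals are only needed later (Step 6 of the proof of Proposition~\ref{prop:skew_dim_det_A_first}), not for this lemma. Your treatment of the virtual particle is workable but vaguer than the paper's, which just notes $\psi_i(virt\mid K)=\sum_{y\in\Z}\psi_i(y\mid K+1)$ and $\psi_i(x\mid K)=\sum_{y\ge x}\psi_i(y\mid K+1)$ differ only by summands that vanish for small $y$ by the non-virtual claim one level up.
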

Informally, one may think that $\psi_{i}(virt\mid K)=\lim_{x\to-\infty}\psi_{i}(x\mid K)$.
\begin{proof}
	First, observe that for $x,y\in\Z$ we have
	\begin{align*}
		\xi_1(x,y)=\frac{1}{2\pi\i}\oint_{|z|=1}\frac{dz}{z^{y-x+1}}\frac{1}{1-z}
	\end{align*}
	(cf. (\ref{h_f_genf}) and (\ref{h_one_variable})), so
	\begin{align*}
		(\xi_1^{*(N-K)})(x,y)=
		\frac{1}{2\pi\i}\oint_{|z|=1}\frac{dz}{z^{y-x+1}}\frac{1}{(1-z)^{N-K}}
		=1_{x\le y}\binom{N-K-1+y-x}{N-K-1}.
	\end{align*}
	This implies that
	\begin{align*}&
		(N-K-1)!\cdot \psi_i(x\mid K)
		=
		\sum\nolimits_{y\colon y\ge x}
		(y-x+1)_{N-K-1}\cdot \psi_i(y\mid N)
		\\&\hspace{45pt}=
		\sum\nolimits_{j=1}^{N}1_{\nu_j-j\ge x}\cdot
		(\nu_j-j-x+1)_{N-K-1}
		[\Vs(\nu_1-1,\ldots,\nu_N-N)^{-1}]_{ij}
	\end{align*}
	(we have used (\ref{psi_q=1})).

	Consider the following polynomial in $w$ of degree $N-K-1\le N-1$:
	\begin{align*}
		f(w):=(w-x+1)_{N-K-1}.
	\end{align*}
	It can be readily checked that for $x\le \nu_N-K-1$, one has
	\begin{align*}
		1_{\nu_j-j\ge x}\cdot f(\nu_j-j)=f(\nu_j-j),
		\qquad j=1,\ldots,N,
	\end{align*}
	due to the fact that $f(w)$ vanishes for $w=x-1,\ldots,x-N+K+1$. 

	Thus, one can apply Proposition \ref{prop:Vandermonde_summation} to the above sum over $N$ and obtain
	\begin{align*}
		(N-K-1)!\cdot \psi_i(x\mid K)=[w^{N-i}]f(w).
	\end{align*}
	This is zero for $i=1,\ldots,K$, and is equal to one for $i=K+1$. This establishes the ``non-virtual'' claim of the lemma.

	To prove the claim about $\psi_{i}(virt\mid K)$, observe that $\psi_{i}(y\mid K+1)=0$ for $i=1,\ldots,K+1$ and all sufficiently small $y\in\Z$ (this follows from the ``non-virtual'' claim). Since we have (see (\ref{xi}))
	\begin{align*}
		\psi_{i}(virt\mid K)=\sum\nolimits_{y\in\Z}\psi_{i}(y\mid K+1)
		\quad\mbox{and}\quad
		\psi_{i}(x\mid K)=\sum\nolimits_{y\colon y\ge x}\psi_{i}(y\mid K+1),
	\end{align*}
	we can add zero summands to the second sum over $y$, and conclude that $\psi_i(virt\mid K)=\psi_i(x\mid K)$ for $i=1,\ldots,K+1$ and all sufficiently small $x\in\Z$. This completes the proof.
\end{proof}

\smallskip
\noindent\textbf{Step 5.} Let us perform the Cauchy--Binet summation (\S \ref{sub:cauchy_binet_formula}) in (\ref{pre_Cauchy_Binet}). We do the summation first over $\x^{N}$, then over $\x^{N-1}$, etc., up to $\x^{K+1}$. The first summation gives
\begin{align*}
	\sum\nolimits_{\x^{N}}\det[\xi_{1}(\x_{k}^{N-1},\x_{j}^{N})]_{k,j=1}^{N}
	\det[\psi_i(\x_j^{N}\mid N)]_{i,j=1}^{N}
	=\det[\psi_i(\x_k^{N-1}\mid N-1)]_{i,k=1}^{N}.
\end{align*}
Using Lemma \ref{lemma:vanishing}, we see that the $N$th column of the matrix $[\psi_i(\x_k^{N-1}\mid N-1)]_{i,k=1}^{N}$ has zero entries except for the $(N,N)$-th element which is equal to $1$ (recall that $\x^{N-1}_{N}=virt$). This allows to replace the $N\times N$ determinant in the right-hand side above by the same determinant of size $N-1$. 

Continuing and summing over the row $\x^{m}$, $m=N-1,\ldots,K+1$, with the help of Lemma~\ref{lemma:vanishing} we will each time reduce the size of the determinant by one, and this will produce the factor $1/(N-m)!$. Thus, we have shown that
\begin{align}\label{skew_Dim_step5}
	\frac{\Dim_{K,N}(\ka,\nu)}{\Dim_N\nu}=
	(N-1)!\ldots (N-K)!\cdot 
	\det[\psi_i(\ka_j-j\mid K,N,\nu)]_{i,j=1}^{K}
\end{align}
because in (\ref{pre_Cauchy_Binet}) we also had a factor $V(-1,\ldots,-N)=0!1!\ldots (N-1)!$.

\smallskip
\noindent\textbf{Step 6.} Let us now explain how to write the quantities $\psi_i(x\mid K)$ entering the determinant in the right-hand side of (\ref{skew_Dim_step5}) as double contour integrals. We have (see the proof of Lemma \ref{lemma:vanishing})
\begin{align*}
	\psi_i(x\mid K)=
	\sum\nolimits_{j=1}^{N}1_{\nu_j-j\ge x}\cdot
	\frac{(\nu_j-j-x+1)_{N-K-1}}{(N-K-1)!}
	[\Vs(\nu_1-1,\ldots,\nu_N-N)^{-1}]_{ij}.
\end{align*}
By Proposition \ref{prop:inverse_V}, each $(i,j)$-th element $[\Vs(\nu_1-1,\ldots,\nu_N-N)^{-1}]_{ij}$ of the inverse Vandermonde matrix is written as a double contour integral; the $z$ contour there is around $\nu_j-j$, and the $w$ contour is any sufficiently large contour. Integrating over the $z$ contour amounts to picking up the residue at $z=\nu_j-j$. Thus, the above sum over $j$ such that $\nu_j-j\ge x$ can be rewritten as an integral over the $z$ contour encircling points $x,x+1,\ldots$, and not $x-1,x-2,\ldots$, with the quantity $\frac{(\nu_j-j-x+1)_{N-K-1}}{(N-K-1)!}$ replaced by $\frac{(z-x+1)_{N-K-1}}{(N-K-1)!}$.

In this way we get the double contour integral formula (\ref{A_tilde}) for the matrix elements in (\ref{skew_Dim_step5}). This argument completes the proof of Proposition \ref{prop:skew_dim_det_A_first}.


\subsection{Linear transformation and proof of Theorem \ref{thm:q=1_main_formula}} 
\label{sub:linear_transform}

We now aim to rewrite the $K\times K$ determinantal formula obtained in Proposition \ref{prop:skew_dim_det_A_first} and get the desired formula of Theorem \ref{thm:q=1_main_formula}. 

We claim that the $K\times K$ matrices in (\ref{skew_dim_det_A_first}) and (\ref{skew_dim_det_A}) are related by a rather simple row transformation. To see that, we perform the $w$ integration in (\ref{A_tilde}). Since we can choose our contours so that on them $|w|>|z|$, we may expand
\begin{align*}
	\frac{1}{w-z}=
	\sum\nolimits_{j\ge0}z^{j}w^{-j-1}.
\end{align*}
Then, since by (\ref{e_f_genf}),
\begin{align*}
	\frac{1}{2\pi\i}\oint_{\cont(\infty)}\frac{dw}{w^{N+1-i}}
	w^{-j-1}\prod_{r=1}^{N}(w-\nu_r+r)=
	(-1)^{i-j-1}e_{i-j-1}(\nu_1-1,\ldots,\nu_N-N),
\end{align*}
we have
\begin{align*}
	\tilde A_i(x)&=\frac{1}{2\pi\i}
	\oint_{\Cont(x)}dz
	\frac{(z-x+1)_{N-K-1}}{(N-K-1)!}
	\prod_{r=1}^{N}\frac{1}{z-\nu_r+r}\times
	\\&\hspace{100pt}\times
	\sum\nolimits_{j=0}^{i-1}z^{j}(-1)^{i-j-1}
	e_{i-j-1}(\nu_1-1,\ldots,\nu_N-N).
\end{align*}

Now observe that the index $i$ enters the expression for $\tilde A_i(x)$ only through the following polynomial in $z$:
\begin{align*}
	\tilde p_i(z):=\sum\nolimits_{j=0}^{i-1}z^{j}(-1)^{i-j-1}
	e_{i-j-1}(\nu_1-1,\ldots,\nu_N-N),\qquad
	i=1,\ldots,K.
\end{align*}
The polynomials $\tilde p_i$ are monic (i.e., with the leading term 1) and have degrees $0,1,\ldots,K-1$. Thus, applying a suitable row transformation to the $K\times K$ matrix in (\ref{skew_dim_det_A_first}), we may replace them with any other basis in the space $\R_{\le K-1}[z]$ of polynomials in $z$ of degree $\le K-1$, and this will affect only the constant factor in our $K\times K$ determinantal formula.

The quantities $A_i(x)$ in (\ref{skew_dim_det_A}) have the form
\begin{align*}
	A_i(x)&=\frac{1}{2\pi\i}
	\oint_{\Cont(x)}dz
	\frac{(z-x+1)_{N-K-1}}{(N-K-1)!}p_i(z)
	\prod_{r=1}^{N}\frac{1}{z-\nu_r+r},
\end{align*}
where
\begin{align*}
	p_i(z):=\frac{(N-K)!(z+1)_{N}}{(z+i)_{N-K+1}},\qquad i=1,\ldots,K.
\end{align*}
These polynomials all have degree $K-1$, and they clearly form a basis in $\R_{\le K-1}[z]$. To establish (\ref{skew_dim_det_A}), it remains to compute the determinant of the transition matrix from the basis $\{\tilde p_i\}$ to $\{p_i\}$.
\begin{lemma}\label{lemma:determinant_of_p_i_coeffs}
	For the matrix $T=[T_{ij}]_{i,j=1}^{K}$ such that $\sum_{i=1}^{K}\tilde p_i(z)T_{ij}=p_j(z)$, we have
	\begin{align*}
		\det T=(N-1)!\ldots (N-K)!.
	\end{align*}
\end{lemma}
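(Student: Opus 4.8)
The plan is to compute $\det T$ by passing to the monomial basis of $\R_{\le K-1}[z]$ and then evaluating the polynomials $p_j$ at cleverly chosen points. First I would observe that, since each $\tilde p_i$ is monic of degree $i-1$, the coefficient matrix $\tilde U:=\big[[z^{k}]\tilde p_i\big]_{k=0,\dots,K-1;\,i=1,\dots,K}$ is triangular with unit diagonal, so $\det\tilde U=1$. Writing the defining relation $\sum_i\tilde p_iT_{ij}=p_j$ in monomial coordinates gives $P=\tilde U\,T$, where $P:=\big[[z^{k}]p_j\big]_{k,j}$ is the coefficient matrix of the $p_j$'s; hence $\det T=\det P$. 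In particular the dependence on $\nu$ (which enters only through the $\tilde p_i$) drops out, consistent with the $\nu$-independent answer.

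Next I would rewrite $p_j$ as a product of linear factors. Cancelling the denominator, $p_j(z)=(N-K)!\,(z+1)_N/(z+j)_{N-K+1}=(N-K)!\prod_{r\in S_j}(z+r)$, where $S_j=\{1,\dots,N\}\setminus\{j,\dots,j+N-K\}=\{1,\dots,j-1\}\cup\{j+N-K+1,\dots,N\}$. Thus $p_j$ has degree $K-1$, leading coefficient $(N-K)!$, and its roots are exactly the points $-r$, $r\in S_j$.

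The key step is to evaluate at the points $b_m:=-(N-K+m)$, $m=1,\dots,K$, and use the Vandermonde identity $\det[p_j(b_m)]_{m,j}=V(b_1,\dots,b_K)\cdot\det P$, with $V$ the Vandermonde determinant (\ref{Vandermonde}). The point of this choice is that $b_m$ is a root of $p_j$ as soon as $N-K+m\in\{j+N-K+1,\dots,N\}$, i.e. for all $m>j$; hence $p_j(b_m)=0$ strictly below the diagonal and $[p_j(b_m)]_{m,j}$ is upper triangular. A short check shows the diagonal entries $p_j(b_j)=(-1)^{j-1}(N-K+j-1)!\,(K-j)!$ are nonzero, so $\det[p_j(b_m)]=\prod_{j=1}^K(-1)^{j-1}(N-K+j-1)!\,(K-j)!$. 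Since $V(b)=\prod_{1\le m<m'\le K}(b_{m'}-b_m)=(-1)^{K(K-1)/2}\,0!\,1!\cdots(K-1)!$, dividing yields
\[
\det T=\det P=(N-K)!\,(N-K+1)!\cdots(N-1)!=(N-1)!\,(N-2)!\cdots(N-K)!,
\]
after the factors $(-1)^{K(K-1)/2}$ and $0!\cdots(K-1)!$ cancel.

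The main obstacle is arranging triangularity uniformly in $K<N$: evaluating at the ``small'' roots $b_m=-m$ only triangularizes the matrix when $2K\le N+1$ (for larger $K$ the windows $\{j,\dots,j+N-K\}$ are too short and extra nonzero entries appear below the diagonal). Choosing instead the ``upper'' endpoints $b_m=-(N-K+m)$ forces $p_j(b_m)=0$ for every $m>j$ with no constraint on $K$, which is precisely what makes the argument go through for all $1\le K<N$. The remaining evaluations of the diagonal product and of $V(b)$ are routine telescoping products of factorials (and the superfactorial $\prod_{k=0}^{K-1}k!$).
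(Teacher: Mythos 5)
Your proof is correct, and it is essentially the argument the paper gives: both reduce $\det T$ to the determinant of the coefficient matrix of the $p_j$'s via the unitriangularity of the coefficient matrix of the $\tilde p_i$'s, and both compute that determinant by evaluating the $p_j$'s at $K$ integer points chosen among their roots, so that the evaluation matrix becomes triangular, and then dividing by the Vandermonde of those points. The paper packages this slightly differently --- it writes the coefficient determinant as a $K$-fold contour integral of $\det[p_i(z_j)]\big/\bigl(z_1 z_2^2\cdots z_K^K\bigr)$, fixes the constant in $\det[p_i(z_j)]=c\cdot V(z_1,\ldots,z_K)$ by setting $z_j=-j$, and then extracts a coefficient of $V$ --- but the mathematical content coincides with your direct change-of-basis computation.

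One correction to your closing paragraph: the assertion that the points $b_m=-m$ triangularize the matrix only when $2K\le N+1$ is false, and in fact $-1,\ldots,-K$ is exactly the choice the paper makes. With those points $p_j(-m)=0$ whenever $m\le j-1$, so $[p_j(-m)]_{m,j}$ is lower triangular (equivalently, the paper's $[p_i(-j)]_{i,j}$ is upper triangular) for every $1\le K<N$; the nonzero entries in the window $j<m\le j+N-K$ that worried you lie below the diagonal and do no harm. The diagonal entries $p_j(-j)=(-1)^{j-1}(j-1)!\,(N-j)!$ are nonzero because $-j$ belongs to neither block of roots, and the same telescoping of factorials gives the stated answer. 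So both blocks of roots work for all $1\le K<N$: yours uses the upper block $\{j+N-K+1,\ldots,N\}$ where the paper uses the lower block $\{1,\ldots,j-1\}$.
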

\begin{proof}
	Since the matrix of coefficients of the polynomials $\{\tilde p_i(z)\}$ is unitriangular, it suffices to show that the determinant of the matrix of coefficients of $\{p_i(z)\}_{i=1}^{K}$ has determinant $(N-1)!\ldots (N-K)!$. That is, we need to show that
	\begin{align*}
		\det\Big[\frac{1}{2\pi\i}\oint_{|z|=1}
		\frac{p_i(z)}{z^{j}}dz\Big]_{i,j=1}^{K}=(N-1)!\ldots (N-K)!.
	\end{align*}
	It is not hard to see that the above determinant is equal to
	\begin{align*}
		\frac{1}{(2\pi\i)^{K}}
		\oint_{|z_1|=1}\ldots
		\oint_{|z_K|=1}
		\frac{dz_1 \ldots dz_K}{z_1(z_2)^{2}\ldots(z_K)^{K}}
		\det[p_i(z_j)]_{i,j=1}^{K}.
	\end{align*}
	Since the $p_i$'s form a basis in $\R_{\le K-1}[z]$, the determinant $\det[p_i(z_j)]_{i,j=1}^{K}$ must (up to a constant) coincide with the Vandermonde determinant $V(z_1,\ldots,z_K)$. This constant does not depend on $z_1,\ldots,z_K$ and can be computed as
	\begin{align*}
		\frac{\det[p_i(z_j)]_{i,j=1}^{K}}{V(z_1,\ldots,z_K)}=
		\frac{\det[p_i(-j)]_{i,j=1}^{K}}{V(-1,\ldots,-K)}
		=\frac{p_1(-1)\ldots p_K(-K)}{V(-1,\ldots,-K)}.
	\end{align*}
	In the last equality we used the fact that $p_i(-j)=0$ if $i>j$. We also have
	\begin{align*}
		p_i(-i)=(-1)^{i-1}(i-1)!(N-i)!,
	\end{align*}
	so
	\begin{align*}
		{\det[p_i(z_j)]_{i,j=1}^{K}}
		=
		(-1)^{K(K-1)/2}
		(N-1)!\ldots (N-K)!\cdot
		{V(z_1,\ldots,z_K)}.
	\end{align*}
	On the other hand, observe that
	\begin{align*}
    \frac1{(2\pi\i)^{K}}&
    \oint\limits_{|z_1|=1}\ldots\oint\limits_{|z_K|=1} 
    \frac{dz_1 \ldots dz_{K} }
    {z_1(z_2)^{2}\ldots (z_{K})^{K}}
    V(z_1,\ldots,z_K)\\&\quad=
    \mbox{coefficient by $z_2 (z_3)^2 \ldots (z_{K})^{K-1}$ in $V(z_1,\ldots,z_K)$}
    =(-1)^{K(K-1)/2}.
  \end{align*}
  This concludes the proof.
\end{proof}
With this lemma, Proposition \ref{prop:skew_dim_det_A_first} readily implies Theorem \ref{thm:q=1_main_formula}. 


\subsection{Comparison with \cite[Prop. 6.2]{BorodinOlsh2011GT}} 
\label{sub:comparison_with_cite_prop_6_2_borodinolsh2011gt_}

In this subsection we prove Proposition \ref{prop:equivalence_to_BO}. That is, we compare our formula for $\frac{\Dim_{K,N}(\ka,\nu)}{\Dim_N\nu}$ of Theorem \ref{thm:q=1_main_formula} with the formula obtained earlier by Borodin and Olshanski \cite[Prop. 6.2]{BorodinOlsh2011GT}. 

Let us recall the notation of \cite[Prop. 6.2]{BorodinOlsh2011GT}. Let $\Lb$ be a finite interval of integers. By $V_\Lb$ denote the space of rational functions in one variable $z$ which are regular everywhere including $z=\infty$, except that they may have simple poles at some points in $\Z\setminus\Lb$. This space is spanned \cite[Prop. 6.1]{BorodinOlsh2011GT} by the functions
\begin{align*}
	f_{\Lb,m}(z):=
	\frac{\prod_{x\in\Lb}(z-x)}{\prod_{x\in\Lb}(z-x-m)},
	\qquad m\in\Z.
\end{align*}
Every function $f$ from $V_{\Lb}$ can be expressed as a finite linear combination of $f_{\Lb,m}$'s, the coefficients of this expansion are denoted by $(f\colon f_{\Lb,m})$.

Recall the function $H^{*}(\cdot;\nu)$ (\ref{H_star}). As a rational function in $z$, for every $\nu\in\GT_N$ it lies in $V_{\Lb(N)}$, where $\Lb(N):=\{-N,\ldots,-1\}$. To formulate \cite[Prop.~6.2]{BorodinOlsh2011GT}, choose $\nu\in\GT_N$ and $\ka\in\GT_K$, $K< N$. For $j=1,\ldots,K$, denote 
\begin{align*}
	\Lb(N,j):=\{-N+K-j,\ldots,-j\}.
\end{align*}
The formula of \cite[Prop.~6.2]{BorodinOlsh2011GT} looks as
\begin{align}\label{BO_formula}
	\frac{\Dim_{K,N}(\ka,\nu)}{\Dim_N\nu}=
	\det\Big[
	\left(
	H^*(\cdot,\nu)\colon
	f_{\Lb(N,j),\ka_i-i+j}
	\right)
	\Big]_{i,j=1}^{K}.
\end{align}

In the rest of this subsection we show that our quantities $A_i$ (\ref{A_i}) are equal to
\begin{align}\label{A_i_and_BO}
	A_i(x)=A_i(x\mid K,N,\nu)=
	\left(
	H^*(\cdot,\nu)\colon
	f_{\Lb(N,i),x+i}
	\right),
\end{align}
for any $i=1,\ldots,K$ and $x\in\Z$. This will establish the equivalence of our formula (\ref{skew_dim_det_A}) with (\ref{BO_formula}) (and thus prove Proposition \ref{prop:equivalence_to_BO}).

Fix $i=1,\ldots,K$ and expand $H^*(z;\nu)$ into a finite linear combination:
\begin{align*}
	H^*(z;\nu)=\sum\nolimits_{p\in\Z}
	\left(
	H^*(\cdot,\nu)\colon
	f_{\Lb(N,i),x+p}
	\right)\cdot
	f_{\Lb(N,i),x+p}(z)
\end{align*}
Let us apply the integration of the form $g(z)\mapsto\frac{N-K}{2\pi\i}\oint_{\Cont(x)}\frac{(z-x+1)_{N-K-1}}{(z+i)_{N-K+1}}g(z)dz$ (see (\ref{A_i})) to the both sides of the above expansion. We see that to get (\ref{A_i_and_BO}), it suffices to show that
\begin{align}\label{delta_i_p}
	\frac{N-K}{2\pi\i}
	\oint_{\Cont(x)}
	\frac{(z-x+1)_{N-K-1}}{(z+i)_{N-K+1}}f_{\Lb(N,i),x+p}(z)dz=\delta_{i,p}.
\end{align}

It can be readily checked that the $f$'s above have the form
\begin{align*}
	f_{\Lb(N,i),x+p}(z)=\frac{(z+i)_{N-K+1}}{(z+i-x-p)_{N-K+1}}.
\end{align*}

Consider two cases:

\noindent
\textbf{($i=p$)} We have in this case
\begin{align*}&
	\frac{N-K}{2\pi\i}
	\oint_{\Cont(x)}
	\frac{(z-x+1)_{N-K-1}}{(z+i)_{N-K+1}}f_{\Lb(N,i),x+i}(z)dz
	\\&\hspace{130pt}=\frac{N-K}{2\pi\i}
	\oint_{\Cont(x)}
	\frac{1}{(z-x)(z-x+N-K)}dz.
\end{align*}
The only pole of the integrand inside $\Cont(x)$ is $z=x$, and the residue at this pole is equal to one.

\noindent
\textbf{($i\ne p$)} We have
\begin{align*}&
	\frac{N-K}{2\pi\i}
	\oint_{\Cont(x)}
	\frac{(z-x+1)_{N-K-1}}{(z+i)_{N-K+1}}f_{\Lb(N,i),x+p}(z)dz
	\\&\hspace{130pt}=
	\frac{N-K}{2\pi\i}
	\oint_{\Cont(x)}
	\frac{(z-x+1)_{N-K-1}}{(z+i-x-p)_{N-K+1}}dz.
\end{align*}
The zeroes of the numerator are 
\begin{align*}
	z=x-1,x-2,\ldots,x-N+K+1,
\end{align*}
and the zeroes of the denominator are
\begin{align*}
	z=x+p-i,x+p-i-1,\ldots,x+p-i-N+K.
\end{align*}
Observe that the integrand decays as $z^{-2}$ at $z=\infty$ and so has zero residue at infinity. Recall that the contour $\Cont(x)$ encircles points $x,x+1,x+2,\ldots$. It is readily seen that (1) if $p<i$, then the integrand has no poles inside $\Cont(x)$, (2) for $p>i$, all the poles are inside $\Cont(x)$. Thus, the integral vanishes in both cases. 

We thus have proven (\ref{delta_i_p}), and therefore established (\ref{A_i_and_BO}). This concludes the proof of Proposition \ref{prop:equivalence_to_BO}.



\section{Idea of proof of the Uniform Approximation Theorem} 
\label{sec:idea_of_proof_of_the_uniform_approximation_theorem}

As shown in \S\S7--8 of \cite{BorodinOlsh2011GT}, the determinantal formula (\ref{BO_formula}) for the relative dimensions $\frac{\Dim_{K,N}(\ka,\nu)}{\Dim_N\nu}$ implies the Uniform Approximation Theorem (Theorem~\ref{thm:UAT}). In this section for the sake of completeness we include an idea of proof of Theorem \ref{thm:UAT} based on our equivalent formula for the relative dimensions (\ref{skew_dim_det_A})--(\ref{A_i}). We omit certain technical details which are the same as in \cite[\S8]{BorodinOlsh2011GT}.

Let us first rewrite the quantities $A_i(x)$ (\ref{A_i}) as contour integrals over the unit circle $\T$:
\begin{proposition}\label{prop:contour_integral_representation}
	For any fixed $K$, $i$ and $x$, all $N>K+x+1$ and any $\nu\in\GT_N$ one has
	\begin{align}\label{A_i_circle_integral}
		A_i(x)=\frac{1}{2\pi\i}
		\oint_{\T}
		\Phi(u;\om(\nu))\cdot
		\frac{\big(\frac{N}{u-1}-x+\frac{1}{2}\big)_{N-K-1}}
		{\big(\frac{N}{u-1}+i-\frac{1}{2}\big)_{N-K+1}}
		\frac{N(N-K)u}{(u-1)^{2}}\frac{du}{u},
	\end{align}
	where $\om(\nu)$ and $\Phi(u;\om)$ are defined in \S \ref{sub:description_of_the_boundary}.
\end{proposition}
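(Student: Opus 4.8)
The plan is to start from the contour integral formula (\ref{A_i}) for $A_i(x)$ and perform an explicit change of variables that turns the $z$-contour $\Cont(x)$ into the unit circle $\T$, simultaneously converting the rational function $H^*(z;\nu)$ into $\Phi(u;\om(\nu))$. The key observation is that the definition (\ref{Phi}) of $\Phi(u;\om)$ together with the embedding $\GT_N\hookrightarrow\Om$ from Definition \ref{def:embeddings} should match $H^*(z;\nu)$ after a substitution relating $z$ to $u$. Comparing (\ref{H_star}) with (\ref{Phi}), the natural candidate is the Möbius-type substitution
\begin{align*}
	z=\frac{N}{u-1}-\frac12,\qquad\text{equivalently}\qquad u=1+\frac{N}{z+\frac12},
\end{align*}
which sends $z=\infty$ to $u=1$ and produces the factors $\frac{N}{u-1}$ appearing throughout (\ref{A_i_circle_integral}).

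First I would verify that under this substitution $H^*(z;\nu)$ becomes exactly $\Phi(u;\om(\nu))$. This is a direct computation: each factor $\frac{z+r}{z+r-\nu_r}$ in (\ref{H_star}) should transform into one of the elementary factors in (\ref{Phi}), with the parameters $\al^\pm_i,\be^\pm_i,\ga^\pm$ read off from the Frobenius-type coordinates of Definition \ref{def:embeddings}. The scaling $\al_i^\pm(\nu)=\frac{\nu_i^\pm-i+1/2}{N}$, $\be_i^\pm(\nu)=\frac{(\nu^\pm)_i'-i+1/2}{N}$ is precisely engineered so that, after the substitution, the poles and zeros of $H^*$ at $z=\nu_r-r$ and $z=-r$ line up with the points $u=(1-\al^\pm)^{-1}$-type singularities of $\Phi$. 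I would also translate the remaining rational prefactor $\frac{(z-x+1)_{N-K-1}}{(z+i)_{N-K+1}}$ under the same substitution, which directly yields the Pochhammer ratio $\frac{(\frac{N}{u-1}-x+\frac12)_{N-K-1}}{(\frac{N}{u-1}+i-\frac12)_{N-K+1}}$ in (\ref{A_i_circle_integral}).

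Next I would track the differential and the orientation. Differentiating the substitution gives $dz=-\frac{N}{(u-1)^2}\,du$, and since $z=\frac{N}{u-1}-\frac12$ we have $\frac{1}{2\pi\i}\oint_{\Cont(x)}(\cdots)\,dz$ converting into an integral over the image contour in the $u$-plane. The factor $N-K$ in front of (\ref{A_i}) combines with $|dz/du|$ and an extra $u/u$ bookkeeping factor to produce the displayed prefactor $\frac{N(N-K)u}{(u-1)^2}\frac1u$. The constraint $N>K+x+1$ guarantees that the contour $\Cont(x)$, which encircles $x,x+1,\ldots,\nu_1-1$ but avoids $x-1,x-2,\ldots$, maps to a contour homotopic to $\T$ that separates the correct singularities; this is where I would argue that the image of $\Cont(x)$ can be deformed to the unit circle without crossing any poles of the integrand, using that the substitution is a fractional linear map and that $u=1$ (the image of $z=\infty$) is not a pole of the transformed integrand since $\Phi(1;\om)=1$.

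The main obstacle I expect is the careful bookkeeping of which singularities lie inside versus outside the transformed contour, and in particular verifying that the image of $\Cont(x)$ under the Möbius map is genuinely homotopic to $\T$ in the complement of the poles. A fractional linear transformation maps circles/lines to circles/lines but can invert ``inside'' and ``outside,'' so I would need to check the orientation and the location of the point $u=1$ relative to the image contour, using the hypothesis $N>K+x+1$ to ensure the relevant poles stay on the correct side. The algebraic identity $H^*(z;\nu)=\Phi(u;\om(\nu))$ itself is routine once the substitution is fixed, so the genuine content lies in this contour-deformation and orientation analysis, which is presumably where the technical details shared with \cite[\S8]{BorodinOlsh2011GT} enter.
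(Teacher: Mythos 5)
Your substitution $z=\frac{N}{u-1}-\frac12$ is exactly the one the paper uses (it is imported from \cite[Prop.~5.2]{BorodinOlsh2011Bouquet}, which supplies the identity $H^*(z;\nu)=\Phi(u;\om(\nu))$), and you are right that the algebraic matching of the integrands is routine while the contour analysis is the real content. But the specific plan you propose for that analysis --- push $\Cont(x)$ through the M\"obius map and then deform its image to $\T$ --- would fail. The contour $\Cont(x)$ encircles the poles $z=\nu_j-j\ge x$; after cancellation against the numerator $(z+1)_N$ of $H^*$, the surviving ones among these sit at $\nu_j-j\ge\max(x,0)$ and are sent by $u=1+\frac{N}{z+1/2}$ into $(1,\infty)$, i.e.\ \emph{outside} the unit circle, whereas the poles of $\Phi(u;\om(\nu))$ lying in $(-1,1)$ are the images of the poles $z=\nu_j-j<-N$, which $\Cont(x)$ leaves outside. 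So the image of $\Cont(x)$ and the circle $\T$ separate the singularities of the integrand in incompatible ways; they are not homotopic in the complement of the poles, and no orientation convention repairs this.

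The missing ingredients are two preliminary deformations carried out in the $z$-plane \emph{before} changing variables. First, the factor $(z-x+1)_{N-K-1}$ vanishes at $z=x-1,x-2,\ldots,x-N+K+1$, so the integrand has no poles at these points and $\Cont(x)$ may be dragged left to $\Cont(x-N+K+1)$ without changing the integral. Second, the integrand decays as $z^{-2}$ at infinity, so this enlarged contour may be opened up into the vertical line $\operatorname{Re}z=x-N+K+\tfrac12$. Only at this stage does the hypothesis $N>K+x+1$ do its work: it forces $x-N+K+1<0$, so the M\"obius image of that line is a circle through $u=1$, traversed clockwise (which cancels the minus sign from $dz=-\frac{Nu}{(u-1)^2}\frac{du}{u}$), enclosing precisely the poles of $\Phi$ in $(-1,1)$ and excluding those in $(1,\infty)$; since the residue at $u=1$ vanishes, this circle may then be replaced by $\T$. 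You invoke $N>K+x+1$ and the harmlessness of $u=1$, but apply them to the wrong contour; without the leftward drag and the residue-at-infinity step, the inside/outside bookkeeping you defer cannot come out right.
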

This statement is parallel to \cite[Prop. 8.1]{BorodinOlsh2011GT}, but seems somewhat simpler because it does not involve several different cases.
\begin{proof}
	The quantity $A_i(x)$ is given in (\ref{A_i}) by the single contour integral over the positively oriented contour $\Cont(x)$ which encircles points $x,x+1,\ldots$, and leaves outside $x-1,x-2,\ldots$. However, observe that all possible poles of the integrand
	\begin{align*}
		\frac{(z-x+1)_{N-K-1}}{(z+i)_{N-K+1}}H^*(z;\nu)
		=
		\frac{(z-x+1)_{N-K-1}}{(z+i)_{N-K+1}}\frac{(z+1)_{N}}{\prod_{r=1}^{N}(z+r-\nu_r)}
	\end{align*}
	belong to the set
	\begin{align*}
		\{\nu_1-1,\ldots,\nu_N-N\}\setminus\{x-1,x-2,\ldots,x-N+K+1\}.
	\end{align*}
	This readily implies that we can drag the contour $\Cont(x)$ to the left, and replace it by $\Cont(x-N+K+1)$ without changing the integral. 

	Note also that the integrand in (\ref{A_i}) has zero residue at $z=\infty$ because there it decays as $z^{-2}$. Thus, one can deform the contour $\Cont(x-N+K+1)$ so that it becomes the vertical line which crosses the real line to the left of $x-N+K+1$:
	\begin{align}\label{z_contour_line}
		z(t)=x-N+K+\tfrac12-\i t,\qquad -\infty<t<\infty.
	\end{align}

	We now perform a change of variable suggested in \cite[Prop. 5.2]{BorodinOlsh2011Bouquet}:
	\begin{align}\label{change_of_variables}
		z=-\frac{1}{2}+\frac{N}{u-1},\qquad 
		u=1+\frac{N}{z+\frac12}.
	\end{align}
	As shown in that proposition, we have
	\begin{align*}
		H^*(z;\nu)=\Phi(u;\om(\nu)).
	\end{align*}
	Clearly, $dz=-\dfrac{Nu}{(u-1)^2}\dfrac{du}{u}$. Thus, we obtain
	\begin{align}\label{A_i_cont_prime}
		A_i(x)=-\frac{1}{2\pi\i}
		\oint_{\Cont'(x-N+K+1)}
		\Phi(u;\om(\nu))\cdot
		\frac{\big(\frac{N}{u-1}-x+\frac{1}{2}\big)_{N-K-1}}
		{\big(\frac{N}{u-1}+i-\frac{1}{2}\big)_{N-K+1}}
		\frac{N(N-K)u}{(u-1)^{2}}\frac{du}{u},
	\end{align}
	which is almost the same as the desired claim (\ref{A_i_circle_integral}), except for the minus sign and the fact that the integral is over the contour $\Cont'(x-N+K+1)$ which is the image of (\ref{z_contour_line}) under our change of variables (\ref{change_of_variables}). That is, the contour in (\ref{A_i_cont_prime}) is 
	\begin{align}\label{u_contour_circle}
		u(t)=1+\frac{N}{x-N+K+1-\i t}, \qquad -\infty<t<\infty.
	\end{align}
	Take $N>x+K+1$. An elementary computation shows that this contour is a circle with center $1+\frac{N}{2(x-N+K+1)}$ and radius $\frac{N}{2|x-N+K+1|}$ passed in the negative (clockwise) direction.

	The integrand	in (\ref{A_i_cont_prime}) has a finite number of possible poles which arise from $\Phi(u;\om(\nu))$ (see \S \ref{sub:description_of_the_boundary} and especially Definition \ref{def:embeddings}):
	\begin{align*}
		u=1+\frac{1}{\al_i^+(\nu)}\in(1,\infty);\qquad
		u=1- \frac{1}{1+\al_i^-(\nu)}\in(-1,1),
	\end{align*}
	plus a pole at $u=1$ (corresponding to $z=\infty$ via (\ref{change_of_variables})) where the integrand has zero residue. Because $N>x+K+1$, the $u$ contour (\ref{u_contour_circle}) encircles all poles which are inside the unit circle and leaves outside the ones belonging to $(1,\infty)$. Thus, we can replace it by the unit circle $\T$ itself. The negative direction of the contour (\ref{u_contour_circle}) then eliminates the minus sign in (\ref{A_i_cont_prime}). This concludes the proof.
\end{proof}

It can be readily checked that in (\ref{A_i_circle_integral}) we have
\begin{align}\label{convergence_to_power_of_u}
	\Rs_{K,x,i}^{(N)}(u):=\frac{\big(\frac{N}{u-1}-x+\frac{1}{2}\big)_{N-K-1}}
	{\big(\frac{N}{u-1}+i-\frac{1}{2}\big)_{N-K+1}}
	\frac{N(N-K)u}{(u-1)^{2}}\to \frac{1}{u^{x+i}},\qquad N\to\infty,
\end{align}
uniformly in $u\in\T$ for fixed $K$, $x$, and $i$. Thus, every $A_i(x)$ has a nice asymptotic behavior. Namely, it is close to $\varphi_{i+x}(\om(\nu))$ (see (\ref{phi_n_oint}) and Definition \ref{def:embeddings}).

The rest of the proof of Theorem \ref{thm:UAT} is based on Proposition \ref{prop:contour_integral_representation} and on the above observation (\ref{convergence_to_power_of_u}). We need to show that $\La^{N}_{K}(\nu,\ka)$ (\ref{link_La}) is close to $\La^{\infty}_{K}(\om(\nu),\ka)$ (\ref{link_infinity}) for all fixed $K$ and $\ka\in\GT_K$, all large $N$ and any $\nu\in\GT_N$. Both links involve one and the same factor $\Dim_K\ka$, so we need to show that the following $K\times K$ determinants 
\begin{align*}
	\frac{\Dim_{K,N}(\ka,\nu)}{\Dim_N\nu}=\det[A_{i}(\ka_j-j)]_{i,j=1}^{K}\
	\quad\mbox{and}\quad
	\varphi_\ka(\om(\nu))=\det[\varphi_{\ka_j-j+i}(\om(\nu))]_{i,j=1}^{K}.
\end{align*}
are close to each other. Both these determinants admit similar $K$-fold contour integral representations with integration over the torus $\T^{K}:=\T\times \ldots\times \T$:
\begin{align*}&
	\det[A_{i}(\ka_j-j)]_{i,j=1}^{K}=\frac{1}{(2\pi\i)^{K}}
	\oint_{\T^{K}}\Phi(u_1;\om(\nu))\ldots \Phi(u_K;\om(\nu))
	\times\\&\hspace{170pt}\times
	\det[\Rs^{(N)}_{K,\ka_j-j,i}(u)]_{i,j=1}^{K}
	\frac{du_1}{u_1}\ldots \frac{du_K}{u_K},
	\\&
	\det[\varphi_{\ka_j-j+i}(\om(\nu))]_{i,j=1}^{K}
	=\frac{1}{(2\pi\i)^{K}}
	\oint_{\T^{K}}\Phi(u_1;\om(\nu))\ldots \Phi(u_K;\om(\nu))
	\times\\&\hspace{170pt}\times
	\det[u^{-(\ka_j-j+i)}]_{i,j=1}^{K}
	\frac{du_1}{u_1}\ldots \frac{du_K}{u_K}.
\end{align*}
Since (\ref{convergence_to_power_of_u}) implies that $\det[\Rs^{(N)}_{K,\ka_j-j,i}(u)]_{i,j=1}^{K}\to \det[u^{-(\ka_j-j+i)}]_{i,j=1}^{K}$ uniformly in $(u_1,\ldots,u_K)\in\T^K$, this implies the desired Uniform Approximation Theorem (Theorem \ref{thm:UAT}), and thus (as explained in \cite[\S3]{BorodinOlsh2011GT}) the description of the boundary of the Gelfand--Tsetlin graph.


\section{$q$-generalizations} 
\label{sec:_q_generalization_theorem_thm:q_main_formula_}

In this section we briefly discuss $q$-extensions of Theorem \ref{thm:q=1_main_formula}. We start with the most general statement, and then obtain Theorem \ref{thm:q_main_formula} as its corollary. We will also discuss in \S\S \ref{sub:_q_gelfand_tsetlin_graph}--\ref{sub:_q_toeplitz_matrices} some connections of Theorem \ref{thm:q_main_formula} with the $q$-Gelfand--Tsetlin graph and $q$-Toeplitz matrices of \cite{Gorin2010q}.

\smallskip

We will always assume that $0 < q < 1$.

\subsection{$q$-specializations of skew Schur polynomials} 
\label{sub:_q_specializations_of_skew_schur_polynomials}

In the language of Laurent--Schur polynomials, Theorem \ref{thm:q=1_main_formula} provides a $K\times K$ determinantal formula for 
\begin{align*}
	\frac{\Dim_{K,N}(\ka,\nu)}{\Dim_N\nu}=
	\frac{s_{\nu/\ka}(\overbrace{1,\ldots,1}^{N-K})}
	{s_{\nu}(\underbrace{1,\ldots,1}_{N})},\qquad
	\ka\in\GT_K,\quad\nu\in\GT_N,\quad 1\le K<N.
\end{align*}

Our $q$-generalization involves putting powers of $q$ instead of $1$'s in the numerator and in the denominator of the above formula. The ordinary (not skew) Laurent--Schur polynomial $s_\nu$ will always be evaluated at the geometric sequence $1,q,\ldots,q^{N-1}$. Using our approach with the inverse Vandermonde matrix, we manage to replace the $N-K$ ones in $s_{\nu/\ka}(1,\ldots,1)$ by \emph{any} subset of the geometric sequence $1,q,\ldots,q^{N-1}$, and there still exists some $K\times K$ determinantal formula for the quotient of $q$-specialized $s_{\nu/\ka}$ and $s_\nu$.

Let us introduce some notation. Let $F:=\{0,1,\ldots,N-1\}$, and $T=\{t_{1}<t_2<\ldots<t_{N-K}\}\subset F$ be any subset of size $N-K$. Define the following functions:
\begin{align}\label{q_psi}
	\q\psi^{T}_{i}(x)=
	\q\psi^{T}_{i}(x\mid K,N,\nu)
	:=
	\sum_{j=1}^{N}
	h_{\nu_j-j-x}(q^{T})
	\cdot[\Vs(q^{\nu_N-N},\ldots,q^{\nu_1-1})^{-1}]_{ij}
\end{align}
($i=1,\ldots,N$, $x\in\Z$), where $h_{m}(q^{T})$ is the complete homogeneous symmetric polynomial (\S \ref{sub:particular_cases_e_and_h_polynomials}) evaluated at $q^{t_1},\ldots,q^{t_{N-K}}$, and $\Vs(q^{\nu_N-N},\ldots,q^{\nu_1-1})^{-1}$ is the inverse Vandermonde matrix (\S \ref{sub:inverse_vandermonde_matrix}) with nodes $q^{\nu_N-N}>\ldots>q^{\nu_1-1}$.

Let $S:=F\setminus T$, and $S':=N-S$ (the operation is done with every element). Write $S'$ in increasing order, $S'=\{s_1'<\ldots<s_K'\}$.
\begin{theorem}\label{thm:general_q}
	With the above notation, we have the following $K\times K$ determinantal formula for any $1\le K<N$, $\ka\in\GT_K$, $\nu\in\GT_N$, and any subset $T\subset F$ of size $N-K$:\footnote{Note that the right-hand side of (\ref{q_general_skew_schur}) is clearly symmetric in $t_1,\ldots,t_{N-K}$, as it should be.}
	\begin{align}&
		\label{q_general_skew_schur}
		\frac{s_{\nu/\ka}(q^{t_1},\ldots,q^{t_{N-K}})}
		{s_\nu(1,q,q^{2},\ldots,q^{N-1})}
		\\&\hspace{40pt}=
		(-q^{N})^{t_1+\ldots+t_{N-K}}\cdot
		\frac{V(q^{-1},q^{-2},\ldots,q^{-N})}{V(q^{t_1},\ldots,q^{t_{N-K}})}
		\cdot\det[\q\psi^{T}_{s_i'}(\ka_j-j)]_{i,j=1}^{K},
		\nonumber
	\end{align}
	where $V(\cdot)$ is the Vandermonde determinant (\ref{Vandermonde}).
\end{theorem}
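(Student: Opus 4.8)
The plan is to follow the proof of Proposition~\ref{prop:skew_dim_det_A_first} essentially verbatim, replacing the nodes $\nu_i-i$ of the inverse Vandermonde matrix by the $q$-nodes $q^{\nu_i-i}$, and the all-ones specialization by the geometric one. First I would use the product formula~\eqref{s_nu_q} to write
\begin{align*}
	\frac{1}{s_\nu(1,q,\ldots,q^{N-1})}
	=\frac{V(q^{-1},\ldots,q^{-N})}{V(q^{\nu_N-N},\ldots,q^{\nu_1-1})},
\end{align*}
and express $1/V(q^{\nu_N-N},\ldots,q^{\nu_1-1})$ as the determinant of the full inverse Vandermonde matrix with these nodes, exactly as in Step~1 of \S\ref{sub:first_determinantal_formula}; this already produces the factor $V(q^{-1},\ldots,q^{-N})$ appearing in~\eqref{q_general_skew_schur}, the $q$-analogue of $V(-1,\ldots,-N)$. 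Next I would expand the numerator $s_{\nu/\ka}(q^{t_1},\ldots,q^{t_{N-K}})$ through the combinatorial formula~\eqref{skew_schur_combinatorial_formula} together with the one-variable determinantal identity~\eqref{skew_one_variable_det_any}, assigning the variable $q^{t_m}$ to level $K+m$. This yields a sum over interlacing arrays of a product of determinants $\det[\xi_{q^{t_m}}(\cdot,\cdot)]$, ready for Cauchy--Binet, in complete parallel with~\eqref{pre_Cauchy_Binet}.

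The core of the argument is the Cauchy--Binet summation over the intermediate rows $\x^{N},\x^{N-1},\ldots,\x^{K+1}$. The new ingredient, replacing the repeated convolution of $\xi_1$ in~\S\ref{sub:first_determinantal_formula}, is that by the generating function~\eqref{h_f_genf}
\begin{align*}
	(\xi_{q^{t_1}}*\cdots*\xi_{q^{t_{N-K}}})(x,y)
	=[z^{y-x}]\prod_{m=1}^{N-K}\frac{1}{1-q^{t_m}z}
	=h_{y-x}(q^{t_1},\ldots,q^{t_{N-K}}),
\end{align*}
so that the total convolution of the $\xi$'s against the row functions $\psi_i(\cdot\mid N)$ reproduces exactly the functions $\q\psi^{T}_i$ of~\eqref{q_psi}. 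As in Step~5 of~\S\ref{sub:first_determinantal_formula}, each summation is accompanied by a reduction of the determinant size coming from the virtual particle $\x^{m}_{m+1}=virt$. The role of Lemma~\ref{lemma:vanishing} is now taken over by a direct evaluation of the virtual value: using $\xi_u(virt,y)=u^y$ and the summation formula (Proposition~\ref{prop:Vandermonde_summation}) applied to the monomial $a\mapsto a^{t}$ (legitimate because $t\le N-1$), one finds that the virtual value of the relevant partial convolution is a scalar multiple of $1_{\{i=N-t\}}$ for the appropriate exponent $t$. Consequently the $r$-th reduction eliminates the column $i=N-t_{N-K-r+1}$, so that after all $N-K$ reductions the surviving columns are precisely $\{1,\ldots,N\}\setminus(N-T)=N-S=S'$. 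This is exactly what selects the index set $\{s_1',\ldots,s_K'\}$ and leaves the $K\times K$ determinant $\det[\q\psi^{T}_{s_i'}(\ka_j-j)]$.

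What remains is to collect the scalar factors, and I expect this bookkeeping to be the main labor of the proof. There are three sources: the homogeneity prefactors $u^{n}=(q^{t_m})^{K+m}$ coming from~\eqref{skew_one_variable_det_any}; the nonzero virtual values picked up at each of the successive reductions, each of which is a product of factors of the form $(1-q^{t_b-t_a})^{-1}$; and the signs produced by the cofactor expansions. A reassuring check is that the $q$-exponents match up: the homogeneity factors contribute $q^{\sum_m t_m(K+m)}$, while the reduction constants contribute $q^{\sum_a t_a(N-K-a)}/V(q^{t_1},\ldots,q^{t_{N-K}})$, and the two exponents add to $N\sum_m t_m$, so that these factors assemble into $(-q^N)^{t_1+\cdots+t_{N-K}}/V(q^{t_1},\ldots,q^{t_{N-K}})$ once the overall sign is tracked. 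A subtlety with no counterpart at $q=1$ is that the intermediate sums at the virtual particle are genuine infinite geometric series, convergent only because $0<q<1$ and $t_1<\cdots<t_{N-K}$; this is what forces the increasing assignment of the variables along the levels, and it is harmless since the right-hand side of~\eqref{q_general_skew_schur} is symmetric in the $t_m$. Finally, Theorem~\ref{thm:q_main_formula} would follow by specializing $T$, rewriting the $\q\psi^{T}_i$ as single contour integrals via the inverse Vandermonde representation, and performing a change of polynomial basis as in~\S\ref{sub:linear_transform}.
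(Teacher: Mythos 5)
Your proposal follows the paper's proof of Theorem \ref{thm:general_q} essentially step for step: the same inverse-Vandermonde setup with $q$-nodes, the same expansion of $s_{\nu/\ka}(q^{t_1},\ldots,q^{t_{N-K}})$ into products of $\xi$-determinants, and the same successive Cauchy--Binet reductions driven by the virtual particle, with the surviving row set $S'$ singled out in exactly the same way. The only cosmetic difference is that you evaluate the virtual entries directly (geometric series at the virtual particle followed by Proposition \ref{prop:Vandermonde_summation} applied to a monomial) where the paper packages the same mechanism as the $q$-vanishing Lemma \ref{lemma:vanishing_property_q}; your consistency check on the $q$-exponents matches the stated prefactor, so the remaining bookkeeping is exactly the labor the paper carries out.
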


\begin{remark}\label{rmk:q_general_proj}
	One can define a measure on Gelfand--Tsetlin schemes with fixed top row $\nu\in\GT_N$ whose projections to every $K$th level, $K<N$, have the form
	\begin{align}\label{q_general_proj}
		s_\ka(q^{s_1},\ldots,q^{s_K})
		\frac{s_{\nu/\ka}(q^{t_1},\ldots,q^{t_{N-K}})}
		{s_\nu(1,q,q^{2},\ldots,q^{N-1})},\qquad \ka\in\GT_K
	\end{align}
	(cf. identity \eqref{Macdonald_identity}). For $T=\{0,1,\ldots,N-K-1\}$ we get the measure $\q\Ps^{N,\nu}$ (i.e.,~$q^{\vol}$), see \S \ref{sub:_q_generalization} and \S \ref{sub:proof_of_theorem_thm:q_main_formula} below. 

	Projections \eqref{q_general_proj} allow to define more general $q$-deformations of the Gelfand--Tsetlin graph than the one considered below in \S\S \ref{sub:_q_gelfand_tsetlin_graph}--\ref{sub:boundary_of_the_q_gelfand_tsetlin_graph}. We plan to discuss their boundaries (defined in the spirit of Question \ref{q_question}) in a subsequent publication.
\end{remark}

The rest of this subsection is devoted to proving Theorem \ref{thm:general_q}.

\smallskip

We argue as in \S \ref{sub:first_determinantal_formula}. Consider the functions
\begin{align*}
	\q\psi^{\varnothing}_{i}(x)=
	\q\psi^{\varnothing}_{i}(x\mid N,N,\nu)
	:=
	\sum\nolimits_{j=1}^{N}
	1_{x={\nu_j-j}}
	\cdot[\Vs(q^{\nu_N-N},\ldots,q^{\nu_1-1})^{-1}]_{ij}
\end{align*}
(which are particular cases of (\ref{q_psi})). For any integers $y_1>\ldots>y_N$ there is an obvious identity parallel to (\ref{one_over_dim_delta}) (see also (\ref{s_nu_q})):
\begin{align*}
	V(q^{-1},\ldots,q^{-N})\cdot
	\det[\q\psi^{\varnothing}_i(y_j\mid N,N,\nu)]_{i,j=1}^{N}=
  \frac{1_{y_1=\nu_1-1}\ldots 1_{y_N=\nu_N-N}}
  {s_\nu(1,q,\ldots,q^{N-1})}.
\end{align*}
Next, using (\ref{skew_schur_combinatorial_formula}) and (\ref{skew_one_variable_det_any}), we can write the skew Schur polynomial similarly to (\ref{sum_of_determinants}), which leads to the following expression (cf. (\ref{pre_Cauchy_Binet})):
\begin{align}
	\nonumber
	&
	\frac{s_{\nu/\ka}(q^{t_1},\ldots,q^{t_{N-K}})}
	{s_\nu(1,q,q^{2},\ldots,q^{N-1})}
	=V(q^{-1},\ldots,q^{-N})\cdot
	q^{Nt_{N-K}+(N-1)t_{N-K-1}+\ldots+(N-K+1)t_{1}}
	\times
	\\&
	\label{q_pre_Cauchy_Binet}
	\times
	\sum_{\x^{K+1},\ldots,\x^{N-1},\x^{N}}
	\det[\xi_{q^{t_1}}(\x_{i}^{K},\x_{j}^{K+1})]_{i,j=1}^{K+1}
	\times\\&\hspace{20pt}\times
	\det[\xi_{q^{t_2}}(\x_{i}^{K+1},\x_{j}^{K+2})]_{i,j=1}^{K+2}
	\ldots
	\det[\xi_{q^{t_{N-K}}}(\x_{i}^{N-1},\x_{j}^{N})]_{i,j=1}^{N}
	\det[\q\psi_i^{\varnothing}(\x_j^{N})]_{i,j=1}^{N}.
	\nonumber
\end{align}
This formula is adapted to performing the Cauchy--Binet summation (\S \ref{sub:cauchy_binet_formula}) as in Step 5 in \S \ref{sub:first_determinantal_formula}; but first we need to obtain an analogue of the vanishing property (Lemma \ref{lemma:vanishing}):
\begin{lemma}[$q$-vanishing property]
	\label{lemma:vanishing_property_q}
	For any subset $J=\{j_1,\ldots,j_\ell\}\subset F$, $\ell<N$, any $i=1,\ldots,N$, and any $x\le \nu_N-N+\ell-1$ we have
	\begin{align*}
		\q\psi^{J}_i(x)=\begin{cases}
			0,&\mbox{if $N-i\notin J$},\\
			q^{-x(N-i)}
			\prod_{r\in J,\, r\ne N-i}
			(1-q^{r-N+i})^{-1},&\mbox{otherwise}.
		\end{cases}
	\end{align*}
	Moreover,
	\begin{align*}
		\q\psi^{J}_i(virt)=\begin{cases}
			0,&\mbox{if $N-i\notin J$},\\
			\prod_{r\in J,\, r\ne N-i}
			(1-q^{r-N+i})^{-1},&\mbox{otherwise}.
		\end{cases}
	\end{align*}
\end{lemma}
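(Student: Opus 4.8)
The plan is to imitate the proof of the $q=1$ vanishing property (Lemma \ref{lemma:vanishing}), with the Pochhammer/binomial bookkeeping there replaced by a partial-fraction expansion of the generating function of the $h_m(q^{J})$. Write $\eta_j:=\nu_j-j$, so that \eqref{q_psi} reads $\q\psi^{J}_i(x)=\sum_{j=1}^{N}h_{\eta_j-x}(q^{J})\,[\Vs(q^{\nu_N-N},\ldots,q^{\nu_1-1})^{-1}]_{ij}$, i.e. a sum over the $N$ nodes $q^{\eta_j}$ of a node-function against a row of the inverse Vandermonde matrix. The whole point is to recognize $h_{\eta_j-x}(q^{J})$, for all $j$ simultaneously, as the restriction to those nodes of a single polynomial of degree $\le N-1$, and then to invoke the Summation Formula (Proposition \ref{prop:Vandermonde_summation}).

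The key computation is the following. Specializing \eqref{h_f_genf} to $q^{J}=(q^{r})_{r\in J}$ and decomposing into partial fractions gives
\begin{align*}
\sum_{m\ge0}h_m(q^{J})\,t^{m}=\prod_{r\in J}\frac{1}{1-q^{r}t}=\sum_{r\in J}\frac{c_r}{1-q^{r}t},\qquad c_r:=\prod_{r'\in J,\,r'\ne r}\frac{1}{1-q^{r'-r}},
\end{align*}
so that $h_m(q^{J})=\sum_{r\in J}c_r q^{rm}$ for $m\ge0$. Denote the right-hand side, now viewed as a function of arbitrary $m\in\Z$, by $\tilde h_m:=\sum_{r\in J}c_r q^{rm}$. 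Expanding instead at $t=\infty$ shows that $\prod_{r\in J}(1-q^{r}t)^{-1}=O(t^{-\ell})$ there; matching the $t=\infty$ expansions of the summands $c_r/(1-q^{r}t)$ then forces $\tilde h_m=0$ for $-(\ell-1)\le m\le-1$ (and of course $h_m(q^{J})=0$ there too). Hence the polynomial $g(a):=\sum_{r\in J}c_r q^{-rx}a^{r}$, of degree $\max J\le N-1$, satisfies $g(q^{\eta_j})=\tilde h_{\eta_j-x}=h_{\eta_j-x}(q^{J})$ at every node exactly when $\eta_j-x\ge-(\ell-1)$ for all $j$; since $\eta_N=\nu_N-N$ is the smallest exponent, this holds for all $j$ precisely under the hypothesis $x\le\nu_N-N+\ell-1$.

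Under that hypothesis I may replace $h_{\eta_j-x}(q^{J})$ by $g(q^{\eta_j})$ in \eqref{q_psi} and apply Proposition \ref{prop:Vandermonde_summation} with the nodes $q^{\eta_j}$, obtaining $\q\psi^{J}_i(x)=[\,w^{N-i}\,]\,g(w)$. Reading off the coefficient of $w^{N-i}$ gives $0$ unless $N-i\in J$, and $c_{N-i}q^{-(N-i)x}$ when $N-i\in J$, which is exactly the asserted non-virtual formula. The same scheme handles the virtual particle: by \eqref{xi} the virtual entry enters through $\xi_{q^{t}}(virt,y)=q^{ty}$ in place of $q^{t(y-x)}1_{x\le y}$, i.e. with the lower truncation removed; carrying the partial-fraction computation through the ($q$-analogue of the) convolution \eqref{psi_i_K} against $\q\psi^{\varnothing}_i$ again reduces to a coefficient extraction via Proposition \ref{prop:Vandermonde_summation}, now from the un-shifted polynomial, and yields the stated virtual value.

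I expect the main obstacle to be the negative-index vanishing $\tilde h_m=0$ for $-(\ell-1)\le m\le-1$: this is the exact $q$-counterpart of the fact (used for $q=1$) that $f(w)=(w-x+1)_{N-K-1}$ vanishes at the $\ell-1$ shifted points below $x$, and it is simultaneously what legitimizes the polynomial substitution down to the smallest node and what pins the threshold $x\le\nu_N-N+\ell-1$. The secondary delicate point is the virtual-particle computation, where one must check that the geometric sums produced by $\xi_{q^{t}}(virt,\cdot)$ converge --- which they do because $0<q<1$ orders the ratios $q^{r-r'}$ in the needed direction --- and that the index of the surviving row matches the size reduction of the determinant in the Cauchy--Binet step of \eqref{q_pre_Cauchy_Binet}.
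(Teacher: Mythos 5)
Your proposal is correct and follows essentially the same route as the paper: both proofs recognize $h_{\nu_j-j-x}(q^{J})$, for all $j$ simultaneously, as the values at the nodes $q^{\nu_j-j}$ of the single polynomial $\sum_{r\in J}c_r q^{-rx}w^{r}$ of degree $\le N-1$ (your $c_r$ coincide with the paper's coefficients $\prod_{r'\ne r}(1-q^{r'-r})^{-1}$), use its vanishing at $q^{-1},\ldots,q^{-(\ell-1)}$ to justify the substitution under the hypothesis $x\le\nu_N-N+\ell-1$, and then extract the coefficient of $w^{N-i}$ via Proposition \ref{prop:Vandermonde_summation}. The only cosmetic difference is that you obtain this polynomial and its negative-index vanishing from the partial-fraction expansion of the generating function \eqref{h_f_genf} at $t=0$ and $t=\infty$, whereas the paper reads both off from the bialternant determinant formula for $h_m(q^{J})$; your treatment of the virtual particle via convolution against $\xi_{q^{t}}(virt,\cdot)$ and the attendant geometric-series convergence is likewise the same as the paper's.
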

Informally, one may think that $\q\psi^{J}_i(virt)=\lim_{x\to-\infty}\q\psi^{J}_i(x)$.
\begin{proof}
	First, observe that $h_m(q^{J})$, where $m=0,1,\ldots$, can be viewed as a polynomial in $q^{m}$. Indeed, by the very definition of the Schur polynomial (\ref{schur_polynomial}), we have
	\begin{align*}
		h_m(q^{j_1},\ldots,q^{j_\ell})
		=
		\frac{\det[q^{j_r(m\cdot 1_{s=1}+\ell-s)}]_{r,s=1}^{\ell}}
		{V(q^{j_1},\ldots,q^{j_{\ell}})},
	\end{align*}
	Expanding the determinant along the first column, we obtain
	\begin{align*}
		h_m(q^{j_1},\ldots,q^{j_\ell})
		&=
		\sum\nolimits_{k=1}^{\ell}
		(-1)^{k-1}
		(q^{m})^{j_k}
		q^{j_k(\ell-1)}
		\frac{V(q^{j_1},\ldots,q^{j_{k-1}},q^{j_{k+1}},\ldots,q^{j_{\ell}})}{V(q^{j_1},\ldots,q^{j_{\ell}})}
		\\&=
		\sum\nolimits_{k=1}^{\ell}
		(q^{m})^{j_k}
		q^{j_k(\ell-1)}
		\frac{1}{\prod_{r\ne k}(q^{j_k}-q^{j_r})}.
	\end{align*}
	This gives an explicit expression of $h_m(q^{J})$ as a polynomial $f(q^{m})$, where 
	\begin{align*}
		f(w):=
		\frac{\det[w^{j_r\cdot 1_{s=1}}q^{j_r(\ell-s)}]_{r,s=1}^{\ell}}
		{V(q^{j_1},\ldots,q^{j_{\ell}})}
		=\sum\nolimits_{k=1}^{\ell}
		w^{j_k}
		{\prod\nolimits_{r\ne k}(1-q^{j_r-j_k})^{-1}}.
	\end{align*}
 	Clearly, $\deg f=\max\{j\colon j\in J\}$ which is $\le N-1$, and this polynomial contains only powers $w^{j_1},\ldots,w^{j_k}$. Moreover, from the expression of $f(w)$ as a ratio of determinants it follows that
 	\begin{align*}
 		f(q^{-1})=\ldots=f(q^{-(\ell-1)})=0,
 	\end{align*}
 	because for these values of $w$ the determinant has two identical columns.

	Therefore, for any $x\le \nu_N-N+\ell-1$ by Proposition \ref{prop:Vandermonde_summation} we get
	\begin{align*}
		\q\psi^{J}_{i}(x)=
		\sum\nolimits_{j=1}^{N}
		f(q^{\nu_j-j-x})
		\cdot[\Vs(q^{\nu_N-N},\ldots,q^{\nu_1-1})^{-1}]_{ij}
		=
		[w^{N-i}]f(wq^{-x}).
	\end{align*}
	Thus, if $N-i\notin J$, this is zero, and otherwise we have
	\begin{align*}
		\q\psi^{J}_{i}(x)=
		q^{-x(N-i)}
		\prod\nolimits_{r\in J,\, r\ne N-i}
		(1-q^{r-N+i})^{-1}.
	\end{align*}
	It is not hard to check (similarly to the end of the proof of Lemma \ref{lemma:vanishing}) that $\q\psi^{J}_{i}(virt)$ is given by the limit of the above expression as $x\to-\infty$. This concludes the proof.
\end{proof}
Using Lemma \ref{lemma:vanishing_property_q}, we perform the Cauchy--Binet summation (similarly to Step 5 in \S \ref{sub:first_determinantal_formula}) in (\ref{q_pre_Cauchy_Binet}) first over $\x^{N}$, then over $\x^{N-1}$, etc., up to $\x^{K+1}$. Every such summation reduces the size of the determinant by one. For example, in the first summation we have 
\begin{align*}
	\sum\nolimits_{\x^{N}}
	\det[\xi_{q^{t_{N-K}}}(\x_{i}^{N-1},\x_{j}^{N})]_{i,j=1}^{N}
	\det[\q\psi_i^{\varnothing}(\x_j^{N})]_{i,j=1}^{N}
	=
	\det[\q\psi_i^{\{t_{N-K}\}}(\x_j^{N-1})]_{i,j=1}^{N}.
\end{align*}
The $N$th column of the matrix in the right-hand side (corresponding to $\x^{N-1}_{N}=virt$) has zero entries except for the $(N-t_{N-k})$th one which is equal to one by Lemma \ref{lemma:vanishing_property_q}. The same reduction happens after every summation, and each time we use Lemma \ref{lemma:vanishing_property_q}. It is not hard to see that the resulting factor which arises after these reductions, combined with what was already present in (\ref{q_pre_Cauchy_Binet}), gives the desired pre\-factor $(-q^{N})^{t_1+\ldots+t_{N-K}}\frac{V(q^{-1},q^{-2},\ldots,q^{-N})}{V(q^{t_1},\ldots,q^{t_{N-K}})}$ in (\ref{q_general_skew_schur}). 

Thus, we have established Theorem \ref{thm:general_q}.


\subsection{Remark: contour integral representation in Theorem \ref{thm:general_q}} 
\label{sub:remark_on_contour_integral_representations_in_theorem_thm:general_q}

Using Proposition \ref{prop:inverse_V} and the proof of Lemma \ref{lemma:vanishing_property_q}, one can suggest the following double contour integral representation for the functions $\q\psi_i^{T}(x\mid K,N,\nu)$ (\ref{q_psi}) entering Theorem \ref{thm:general_q}:
\begin{align*}
	\q\psi_i^{T}(x\mid K,N,\nu)&=
	\frac{1}{(2\pi\i)^{2}}\oint_{\q\Cont(x)}dz
	\oint_{\cont(\infty)}\frac{dw}{w^{N+1-i}}
	\frac{1}{w-z}\prod\nolimits_{r=1}^{N}\frac{w-q^{\nu_r-r}}{z-q^{\nu_r-r}}
	\times\\&\hspace{100pt}\times
	\sum\nolimits_{k=1}^{N-K}z^{t_k}q^{-xt_k}
	\prod\nolimits_{s\ne k}(1-q^{t_s-t_k})^{-1}.
\end{align*}
The contour $\q\Cont(x)$ is the same as in Theorem \ref{thm:q_main_formula}, and $\cont(\infty)$ is any sufficiently big contour containing $\q\Cont(x)$. 

We see that a $q=1$ statement parallel to Theorem \ref{thm:general_q} is Proposition \ref{prop:skew_dim_det_A_first} and not Theorem \ref{thm:q=1_main_formula}. In the general setting of Theorem \ref{thm:general_q} it is not clear whether it is possible to perform a linear transformation of rows in the $K\times K$ matrix in (\ref{q_general_skew_schur}) so that the new matrix elements would have simpler form (e.g., as it was done for $q=1$ in \S \ref{sub:linear_transform}). In the rest of this section we restrict our attention to the special case when the $q$-specialization $q^{t_{1}},\ldots,q^{t_{N-K}}$ in (\ref{q_general_skew_schur}) is a geometric sequence. This allows to perform the same trick as in \S \ref{sub:linear_transform}, and obtain Theorem \ref{thm:q_main_formula} in which the matrix elements admit a single contour integral representation. We discuss this in the next subsection.


\subsection{Proof of Theorem \ref{thm:q_main_formula}} 
\label{sub:proof_of_theorem_thm:q_main_formula}

Define the $q$-analogue of the number of trapezoidal Gelfand--Tsetlin schemes of depth $N-K+1$ with top row $\nu\in\GT_N$ and bottom row $\ka\in\GT_K$, $K<N$, by
\begin{align}\label{skew_dim_q}
	\q\Dim_{K,N}(\ka,\nu):=q^{|\ka|(N-K)}s_{\nu/\ka}(1,q,\ldots,q^{N-K-1}),
\end{align}
where $s_{\nu/\ka}$ is the skew Schur polynomial (\S \ref{sub:skew_polynomials}). By (\ref{skew_schur_combinatorial_formula}), one may say that $\q\Dim_{K,N}(\ka,\nu)$ is the partition function of trapezoidal Gelfand--Tsetlin schemes
\begin{align*}
	\ka\prec\nu^{(K+1)}\prec \ldots\prec\nu^{(N-1)}\prec\nu,
\end{align*}
where the weight of every particular scheme is proportional to
\begin{align*}
	q^{|\ka|}q^{|\nu^{(K+1)}|+|\nu^{(K+2)}|+\ldots+|\nu^{(N-1)}|}.
\end{align*}
The factor $q^{|\ka|}$ (which does not depend on a particular trapezoidal Gelfand--Tsetlin scheme) is introduced so that the $q$-link $\q\La^{N}_{K}$ (defined in \S \ref{sub:_q_generalization} as a projection of the $q$-measure (\ref{q_measure})) is given by formula (\ref{qlink_La}) which is similar to the corresponding $q=1$ formula (\ref{link_La}). 

In terms of Schur polynomials, Theorem \ref{thm:q_main_formula} gives a $K\times K$ determinantal formula for 
\begin{align}\label{last_section_q_relative_dim}
	\frac{\q\Dim_{K,N}(\ka,\nu)}{\q\Dim_N\nu}=q^{(N-K)|\ka|}
	\frac{s_{\nu/\ka}(1,q\ldots,q^{N-K-1})}{s_\nu(1,q,q^2,\ldots,q^{N-1})}.
\end{align}
In order to prove it, first observe that a particular case of Theorem \ref{thm:general_q} for $T=\{0,1,\ldots,N-K-1\}$ gives
\begin{align}&\label{last_section_first_det_formula}
	\frac{s_{\nu/\ka}(1,\ldots,q^{N-K-1})}{s_\nu(1,q,q^2,\ldots,q^{N-1})}
	=(-q^{N})^{(N-K)(N-K-1)/2}\times\\&
	\hspace{120pt}
	\times
	\frac{V(q^{-1},\ldots,q^{-N})}{V(1,q,\ldots,q^{N-K-1})}
	\cdot\det[\q\psi^{T}_{i}(\ka_j-j)]_{i,j=1}^{K}.
	\nonumber
\end{align}
We have for $m\ge0$:
\begin{align*}
	h_m(1,q,\ldots,q^{N-K-1})
	=\frac{(q^{m+1};q)_{N-K-1}}{(q;q)_{N-K-1}}
\end{align*}
(this is a particular case of (\ref{s_nu_q}), cf. (\ref{h_m_particular_case})). This implies that one can write the functions $\q\psi^{T}_{i}(x)$ for our $T$ as double contour integrals as follows (cf. \S \ref{sub:remark_on_contour_integral_representations_in_theorem_thm:general_q}):
\begin{align*}&
	\q\psi^{T}_{i}(x)
	=
	\frac{1}{(2\pi\i)^{2}}\oint_{\q\Cont(x)}dz
	\oint_{\cont(\infty)}\frac{dw}{w^{N+1-i}}
	\frac{(zq^{1-x};q)_{N-K-1}}{(q;q)_{N-K-1}}
	\frac{1}{w-z}\prod\nolimits_{r=1}^{N}\frac{w-q^{\nu_r-r}}{z-q^{\nu_r-r}}.
\end{align*}
Here the contour $\q\Cont(x)$ is as in Theorem \ref{thm:q_main_formula}: it encircles $q^{x},q^{x+1},\ldots,q^{\nu_1-1}$, and not $q^{x-1},q^{x-2},\ldots,q^{\nu_N-N}$; and $\cont(\infty)$ is any sufficiently big contour containing $\q\Cont(x)$.

Performing the integration over $w$ in the double contour integral above similarly to \S \ref{sub:linear_transform}, we obtain
\begin{align*}&
	\q\psi^{T}_{i}(x)
	=
	\frac{1}{2\pi\i}\oint_{\q\Cont(x)}dz
	\frac{(zq^{1-x};q)_{N-K-1}}{(q;q)_{N-K-1}}
	\prod\nolimits_{r=1}^{N}\frac1{z-q^{\nu_r-r}}
	\times\\&\hspace{140pt}
	\times
	\sum\nolimits_{j=0}^{i-1}
	z^{j}(-1)^{i-j-1}e_{i-j-1}(q^{\nu_N-N},\ldots,q^{\nu_1-1}).
\end{align*}

We again observe that the index $i$ enters $\q\psi^{T}_{i}(x)$ only through the polynomials
\begin{align*}
	\q \tilde p_i(z):=
	\sum\nolimits_{j=0}^{i-1}
	z^{j}(-1)^{i-j-1}e_{i-j-1}(q^{\nu_N-N},\ldots,q^{\nu_1-1}),
	\qquad i=1,\ldots,K.
\end{align*}
The polynomial $\q \tilde p_i(z)$ is monic of degree $i-1$ ($i=1,\ldots,K$), and thus these polynomials form a basis in $\R_{\le K-1}[z]$. Applying a suitable row transformation to the $K\times K$ matrix in (\ref{last_section_first_det_formula}), we may replace this basis with another basis:
\begin{align*}
	\q p_i(z):=
	(q;q)_{N-K}\frac{\prod_{r=1}^{N}(z-q^{-r})}
	{\prod_{r=i}^{N-K+i}(z-q^{-r})},\qquad
	i=1,\ldots,K.
\end{align*}
Clearly, $\q A_i(x\mid K,N,\nu)$ (\ref{qA_i}) is given by 
\begin{align*}
	\q A_i(x)=
	\frac{1}{2\pi\i}\oint_{\q\Cont(x)}dz
	\frac{(zq^{1-x};q)_{N-K-1}}{(q;q)_{N-K-1}}
	\q p_i(z)
	\prod\nolimits_{r=1}^{N}\frac1{z-q^{\nu_r-r}}.
\end{align*}

The linear transformation that replaces $\{\q \tilde p_i\}_{i=1}^{K}$ by $\{\q p_i\}_{i=1}^{K}$ will affect only the constant factor in (\ref{last_section_first_det_formula}). Similarly to Lemma \ref{lemma:determinant_of_p_i_coeffs}, it can be shown that the determinant of the corresponding transition matrix is equal to 
\begin{align*}
	(-1)^{K(K-1)/2}q^{K(K-1)(K-3N-2)/6}(q;q)_{N-1}\ldots (q;q)_{N-K}.
\end{align*}
Multiplying this coefficient by the factor already present in (\ref{last_section_first_det_formula}), and also by $q^{(N-K)|\ka|}$ because of the difference between (\ref{last_section_q_relative_dim}) and (\ref{last_section_first_det_formula}), after necessary simplifications we complete the proof of Theorem \ref{thm:q_main_formula}.


\subsection{Limit as $q\nearrow 1$ (proof of Proposition \ref{prop:q->1})} 
\label{sub:limit_as_qto_1_}

Fix integers $K<N$ and a signature $\nu\in\GT_N$. Write the quantities $\q A_i(x\mid K,N,\nu)$ (\ref{qA_i}) as sums of the corresponding residues:
\begin{align*}
	\q A_i(x)
	=\sum_{j\colon \nu_j-j\ge x}
	(1-q^{N-K})
	\frac{(q^{\nu_j-j+1-x};q)_{N-K-1}}
	{\prod_{r=i}^{N-K+i}(q^{\nu_j-j}-q^{-r})}
	\frac{\prod_{r=1}^{N}(q^{\nu_j-j}-q^{-r})}{
	\prod_{r\ne j}(q^{\nu_j-j}-q^{\nu_r-r})}.
\end{align*}
The $q\nearrow 1$ limit of every residue is readily computed, and we immediately see that
\begin{align*}
	\lim_{q\nearrow1}\q A_i(x\mid K,N,\nu)=(-1)^{N-K} A_i(x\mid K,N,\nu),
\end{align*}
where $A_i(x\mid K,N,\nu)$ is defined by (\ref{A_i}). This concludes the proof of Proposition~\ref{prop:q->1}.


\subsection{$q$-Gelfand--Tsetlin graph} 
\label{sub:_q_gelfand_tsetlin_graph}

In this and the next subsection we aim to explain how our formula of Theorem \ref{thm:q_main_formula} is related to the boundary of the $q$-Gelfand--Tsetlin graph. 

The $q$-Gelfand--Tsetlin graph $\q\GT$ \cite{Gorin2010q} is a branching graph which has the same vertices and edges as the ``classical'' Gelfand--Tsetlin graph described in \S \ref{sub:gelfand_tsetlin_graph} (i.e., vertices are all signatures $\GT=\bigsqcup_{N=0}^{\infty}\GT_N$, and an edge connects signatures $\mu$ and $\la$ if $\mu\prec\la$). The difference is that instead of being simple (i.e., with multiplicity~$1$), the edges of the $q$-Gelfand--Tsetlin graph carry certain \emph{formal multiplicities} depending on $q$. Namely, if $\mu\prec\la$, then we assign the multiplicity $q^{|\mu|}$ to the edge from $\mu$ to $\la$. Every increasing path in the graph of the form $\nu^{(K)}\prec \ldots\prec \nu^{(N)}$ then is also assigned a multiplicity which is defined as the product of multiplicities of the edges along this path.

It is not hard to see that the $q$-dimension $\q\Dim_N\nu$ (\ref{s_nu_q}), is equal to the sum of multiplicities of all paths from the initial vertex $\varnothing\in\GT_0$ to $\nu\in\GT_N$. The quantities $\q\Dim_{K,N}(\ka,\nu)$ (\ref{skew_dim_q}) can be interpreted in the same way if one considers paths from $\ka$ to $\nu$ (cf. \S \ref{sub:gelfand_tsetlin_graph}).


\subsection{Boundary of the $q$-Gelfand--Tsetlin graph} 
\label{sub:boundary_of_the_q_gelfand_tsetlin_graph}

The question about the boundary of the $q$-Gelfand--Tsetlin graph can be asked in the same way as in \S \ref{sub:coherent_systems} using the notion of coherent systems on the floors $\GT_N$ of the graph $\q\GT$. In the $0<q<1$ case, members $\{M_N\}$ of a coherent system (where $M_N$ is a probability measure on $\GT_N$, $N=0,1,2,\ldots$) must be compatible with the $q$-links $\q\La^{N}_{N-1}$ (\ref{qlink_La}) similarly to Definition \ref{def:coherent}. In detail, it must be 
\begin{align}\label{q_coherent_system_condition}
	\sum_{\nu\in\GT_N\colon\nu\succ\mu}
	M_N(\nu)\,q^{|\mu|}
	\frac{\q\Dim_{N-1}\mu}{\q\Dim_{N}\nu}
	=M_{N-1}(\mu),
	\quad \mbox{$\forall\, N$ and $\forall\,\mu\in\GT_{N-1}$}.
\end{align}

The boundary $\partial(\q\GT)$, i.e., the set of all extreme coherent systems on $\q\GT$ (cf.~Definition \ref{def:boundary}), was identified in \cite{Gorin2010q} with the set of all non-decreasing sequences of integers
\begin{align}\label{q_boundary}
	\mathcal{N}:=\{n_1\le n_2\le n_3\le \ldots\}\subset\Z^{\infty}.
\end{align}
It is informative to note the difference of this set with the boundary $\Om$ of the classical Gelfand--Tsetlin graph (\S \ref{sub:description_of_the_boundary}). See also \cite[end of \S 1.3]{Gorin2010q} for a brief discussion of what happens with the boundary (more precisely, with coherent systems on $\q\GT$) as $q\nearrow1$.

The problem of describing $\partial(\q\GT)$ reduces (in the same way as explained in \S \ref{sub:connection_to_question_question}) to the following question (parallel to Question \ref{question}) about asymptotics of the $q$-links: 
\begin{question}\label{q_question}
	Describe all possible sequences of signatures $\nu(1),\nu(2),\ldots$, where $\nu(N)\in\GT_N$, such that for every fixed level $K$ and signature $\ka\in\GT_K$, the sequence $\{\q\La^{N}_{K}(\nu(N),\ka)\}_{N\ge1}$ has a limit as $N$ goes to infinity. We call such sequences $\{\nu(N)\}$ \emph{$q$-regular}.
\end{question}
The result of \cite{Gorin2010q} states that $q$-regular sequences of signatures $\{\nu(N)\}$ are precisely those whose last coordinates stabilize, i.e., 
\begin{align}\label{q_regular_sequences_limit}
	\lim_{N\to\infty}\nu(N)_{N+1-j}=n_j,\qquad j=1,2,\ldots,
\end{align}
where $\boldsymbol n=\{n_1\le n_2\le \ldots\}$ is the corresponding element of the boundary $\mathcal{N}=\partial(\q\GT)$. Note that this also differs from the $q=1$ situation (see Remark \ref{rmk:reg_seq_answer}).

Since Theorem \ref{thm:q_main_formula} provides a new determinantal formula for the $q$-links, one could in principle use it to obtain the description of the boundary of $\q\GT$ in a new way similarly to what was done for the classical Gelfand--Tsetlin graph in \cite{BorodinOlsh2011GT} (see also \S \ref{sub:uniform_approximation_theorem} and \S \ref{sec:idea_of_proof_of_the_uniform_approximation_theorem}).\footnote{The original proof of \cite{Gorin2010q} is similar to the approach of \cite{OkOl1998} and is based on a Binomial Formula for certain $q$-analogues of Schur polynomials.} We do not carry out this idea in full detail, we only check that for $q$-regular sequences (\ref{q_regular_sequences_limit}) the $q$-links given by Theorem \ref{thm:q_main_formula} have a limit, and, moreover, compute it.
\begin{proposition}\label{prop:limit_q}
	Let $\{\nu(N\}$, $\nu(N)\in\GT_N$, be a $q$-regular sequence of signatures in the sense of (\ref{q_regular_sequences_limit}) corresponding to $\boldsymbol n=\{n_1\le n_2\le \ldots\}\in\mathcal{N}$. Then
	\begin{align}\label{convergence_of_skew_dim_q}
		\lim_{N\to\infty}
		\frac{\q\Dim_{K,N}\big(\ka,\nu(N)\big)}{\q\Dim_N\nu(N)}
		=\det[\q A_i(\ka_j-j\mid K,\infty,\boldsymbol n)]_{i,j=1}^{K}
	\end{align}
	for any fixed $K$ and $\ka\in\GT_K$, where 
	\begin{align}\label{qA_i_infinity}
		\q A_i(x\mid K,\infty,\boldsymbol n):=
		\frac{q^{x+K}}{2\pi\i}
		\oint_{\q\Cont(-x-K)}
		\frac{(zq^{x+K+1};q)_{\infty}(z;q)_{K-i}}
		{(z;q\mid\boldsymbol n)_{\infty}}
		dz.
	\end{align}
	Here we use the notation\footnote{Clearly, $\big(t;q\mid(0,0,\ldots)\big)_{\infty}=(t;q)_{\infty}$.}
	\begin{align*}
		(t;q)_{\infty}&:=\prod\nolimits_{r=0}^{\infty}(1-tq^{r}),\qquad
		\quad
		(t;q\mid\boldsymbol n)_{\infty}:=
		\prod\nolimits_{r=0}^{\infty}
		(1-t q^{r+n_{r+1}}),\quad \boldsymbol n\in\mathcal{N}.
	\end{align*}
\end{proposition}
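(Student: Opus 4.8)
The plan is to take the limit $N\to\infty$ directly inside the finite-$N$ determinantal formula of Theorem \ref{thm:q_main_formula}, working at the level of the individual matrix elements $\q A_i(x\mid K,N,\nu(N))$ since the determinant in (\ref{skew_dim_det_qA}) has fixed size $K\times K$. First I would record the explicit residue expansion of $\q A_i(x)$ already written at the start of \S \ref{sub:limit_as_qto_1_}, namely the sum over $j$ with $\nu_j-j\ge x$ of the product of $q$-Pochhammer factors, and then track how each factor behaves as $N\to\infty$ under the hypothesis (\ref{q_regular_sequences_limit}) that the \emph{last} coordinates $\nu(N)_{N+1-j}=n_j$ stabilize. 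The key structural point is that for a $q$-regular sequence the nodes $q^{\nu_r-r}$ split into two groups: the ``small-index'' nodes (corresponding to the stabilizing tail of $\nu$) tend to fixed limits $q^{n_j-(N+1-j)}\to 0$ in an organized way, whereas most nodes run off to $0$ or $\infty$; since $0<q<1$, the factors $(z;q\mid\boldsymbol n)_\infty$ and $(t;q)_\infty$ in (\ref{qA_i_infinity}) are exactly the limiting forms of the finite products $\prod_{r=1}^{N}(z-q^{\nu_r-r})$ and of the $q$-Pochhammer numerator $(zq^{1-x};q)_{N-K-1}$ respectively.

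The cleanest way to organize this is to start from the single contour integral (\ref{qA_i}) rather than the residue sum, and to perform the change of variable that sends the contour $\q\Cont(x)$ encircling $q^{x},\ldots,q^{\nu_1-1}$ to one encircling the limiting node set. Concretely I would rewrite $\prod_{r=1}^{N}\frac{z-q^{-r}}{z-q^{\nu_r-r}}$ by separating the $K$ largest nodes (which feel the stabilizing tail) from the remaining $N-K$ nodes, and then let $N\to\infty$. The denominator $\prod_r(z-q^{\nu_r-r})$ becomes the infinite product $(z;q\mid\boldsymbol n)_\infty$ after the appropriate normalization, the factor $\frac{(zq^{1-x};q)_{N-K-1}}{\prod_{r=i}^{N-K+i}(z-q^{-r})}$ combines with the surviving part of $\prod_r(z-q^{-r})$ to produce $(zq^{x+K+1};q)_{\infty}(z;q)_{K-i}$, and the prefactors $q^{(N-K)|\ka|}$, $q^{-K(N-K)(N+2)/2}$, $(-1)^{K(N-K)}$ from (\ref{skew_dim_det_qA}) must be absorbed into the substitution $x\mapsto -x-K$ and the overall factor $q^{x+K}$ appearing in (\ref{qA_i_infinity}). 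This bookkeeping of powers of $q$ is the part most prone to sign and exponent errors, so I would verify it by checking the $K=1$ case and the total homogeneity degree separately.

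The main obstacle will be justifying the interchange of the limit $N\to\infty$ with the contour integral, i.e.\ proving that the convergence of integrands is strong enough (uniform on a fixed contour, together with a dominating bound) to pass to the limit, and simultaneously that the contour $\q\Cont(x)$ can be taken independent of $N$ in the limit. Because $0<q<1$ the nodes $q^{\nu_r-r}$ accumulate only at $0$ (the tail going to $+\infty$ in the exponent) and possibly escape to large modulus (the part with $\nu_r-r\to-\infty$), so one must argue that for $N$ large the ``runaway'' nodes leave the fixed contour $\q\Cont(-x-K)$ and contribute nothing, while the stabilizing nodes converge to the poles of $(z;q\mid\boldsymbol n)_\infty^{-1}$ inside it. I would establish uniform geometric bounds on the tail of the product $\prod_{r}(z-q^{\nu_r-r})$ using the stabilization (\ref{q_regular_sequences_limit}) and the convergence of $\sum_r q^{r+n_{r+1}}$, which guarantees that the infinite products in (\ref{qA_i_infinity}) converge absolutely and that the finite-$N$ integrands are dominated uniformly on the contour. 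Once the matrix-element convergence $\q A_i(\ka_j-j\mid K,N,\nu(N))\to \q A_i(\ka_j-j\mid K,\infty,\boldsymbol n)$ is secured, (\ref{convergence_of_skew_dim_q}) follows immediately by continuity of the $K\times K$ determinant.
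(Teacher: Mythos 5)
Your proposal follows essentially the same route as the paper's proof: both pass to the limit in the individual entries of the fixed-size $K\times K$ determinant by starting from the single contour integral (\ref{qA_i}), shifting and rescaling the integration variable so that the contour becomes independent of $N$, tracking the powers of $q$ in the prefactors, and invoking uniform convergence of the integrands on the fixed contour together with continuity of the determinant. The paper's concrete implementation is the substitution $z=wq^{x-N+K+1}$ (which moves the contour to $\q\Cont(0)$), after which the three groups of factors converge exactly as you describe, with the column-dependent prefactor pulled out of the determinant and recombined with the constants in (\ref{skew_dim_det_qA}).
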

\begin{proof}
	We start by investigating the behavior of the matrix elements of 
	\begin{align*}
		\det[\q A_{i}(\ka_j-j\mid K,N,\nu(N))]_{i,j=1}^{K}
	\end{align*}
	(see Theorem \ref{thm:q_main_formula}). We will assume that $K=1,2,\ldots$, $i=1,\ldots,K$, and $x\in\Z$ are fixed. Observe that the contour $\q\Cont(x)$ in the definition of $\q A_i(x\mid K,N,\nu(N))$ (\ref{qA_i}) can be replaced by $\q \Cont(x-N+K+1)$ because of the zeroes of the integrand $z=q^{x-1},q^{x-2},\ldots,q^{x-N+K+1}$ coming from the factor $(zq^{1-x};q)_{N-K-1}$ in the numerator. Let us then change the variable to $w$, $z=w q^{x-N+K+1}$, so the $w$ contour is simply $\q\Cont(0)$ which encircles the segment $[0,1]$ and not the points $q^{-1},q^{-2},\ldots$. We thus have
	\begin{align}&
		\label{qA_i_Cont_0}
		\q A_i(x\mid K,N,\nu(N))=
		\frac{(1-q^{N-K})q^{x-N+K+1}}{2\pi\i}
		\times\\&\qquad\times
		\oint_{\q\Cont(0)}dw
		\frac{(wq^{-N+K+2};q)_{N-K-1}}
		{\prod_{r=i}^{N-K+i}(wq^{x-N+K+1}-q^{-r})}
		\prod_{r=1}^{N}\frac{wq^{x-N+K+1}-q^{-r}}{wq^{x-N+K+1}-q^{\nu(N)_r-r}}.
		\nonumber
	\end{align}
	Let us transform the factors in the integrand in (\ref{qA_i_Cont_0}) one by one:
	\begin{enumerate}[\bf1.]
		\item We have
		\begin{align*}
			(wq^{-N+K+2};q)_{N-K-1}=w^{N-K-1}(-1)^{N-K-1}q^{-\binom{N-K-1}2}
			(w^{-1};q)_{N-K-1},
		\end{align*}
		and the factor $(w^{-1};q)_{N-K-1}$ tends to $(w^{-1};q)_{\infty}$ as $N\to\infty$ and $K$ is fixed.
		\item The two products of $(wq^{x-N+K+1}-q^{-r})$ in the numerator and in the denominator almost cancel out yielding $K-1$ factors:
		\begin{align*}&
			\prod\nolimits_{r=1}^{i-1}(wq^{x-N+K+1}-q^{-r})
			\prod\nolimits_{r=N-K+i+1}^{N}(wq^{x-N+K+1}-q^{-r})		
			\\&\qquad=
			q^{-N(K-1)}\prod\nolimits_{r=1}^{i-1}(wq^{x+K+1}-q^{-r+N})
			\prod\nolimits_{r=1}^{K-i}(wq^{x+K+1}-q^{r-1}).
		\end{align*}
		We have the following convergence as $N\to\infty$:
		\begin{align*}&
			\prod\nolimits_{r=1}^{i-1}(wq^{x+K+1}-q^{-r+N})
			\prod\nolimits_{r=1}^{K-i}(wq^{x+K+1}-q^{r-1})
			\\&\qquad
			\to
			\prod\nolimits_{r=1}^{i-1}(wq^{x+K+1})
			\prod\nolimits_{r=1}^{K-i}(wq^{x+K+1}-q^{r-1})
			\\&\qquad \qquad=
			w^{K-1}q^{(K-1)(x+K+1)}
			(w^{-1}q^{-x-K-1};q)_{K-i}.
		\end{align*}

		\item Finally, let us write
		\begin{align*}
			&\prod\nolimits_{r=1}^{N}\frac{1}{wq^{x-N+K+1}-q^{\nu(N)_r-r}}
			=
			\prod\nolimits_{r=1}^{N}\frac{1}{wq^{x-N+K+1}-q^{\nu(N)_{N+1-r}-N-1+r}}
			\\&\hspace{70pt}=
			w^{-N}q^{-N(x-N+K+1)}
			\prod\nolimits_{r=1}^{N}\frac{1}{1-w^{-1}q^{-x-K-2}q^{\nu(N)_{N+1-r}+r}}.
		\end{align*}
		Due to our assumption (\ref{q_regular_sequences_limit}), we have as $N\to\infty$:
		\begin{align*}
			\prod\nolimits_{r=1}^{N}\frac{1}{1-w^{-1}q^{-x-K-2}q^{\nu(N)_{N+1-r}+r}}
			\to
			\frac{1}{(w^{-1}q^{-x-K-1};q\mid \boldsymbol n)_{\infty}}.
		\end{align*}
	\end{enumerate}

	Collecting all the above transformations, we see that the integrand in (\ref{qA_i_Cont_0}) behaves as $N\to\infty$ in the following way:
	\begin{align}&
		\label{q_Ai_limit_main}
		w^{-2}\frac{(w^{-1};q)_{\infty}(w^{-1}q^{-x-K-1};q)_{K-i}}
		{(w^{-1}q^{-x-K-1};q\mid\boldsymbol n)_{\infty}}
		\times\\&
		\hspace{50pt}
		\times
		\label{q_Ai_limit_prefactor}
		(-1)^{N-K-1}
		q^{-1+Kx+K(K-1)/2}q^{N(1/2-K-x)}q^{N^2/2}.
	\end{align}

	Without the prefactor (\ref{q_Ai_limit_prefactor}), the convergence as $N\to\infty$ of the integrand is uniform on our contour $\q\Cont(0)$. Let us take this prefactor (which depends only on the column of the matrix) outside $\det[\q A_{i}(\ka_j-j\mid K,N,\nu(N))]_{i,j=1}^{K}$. Together with what was already present in (\ref{skew_dim_det_qA}), this yields a factor of $(-q^{-1})^{K}$ in front of the $K\times K$ determinant. Inserting $(-q^{-1})^{K}$ back into the determinant, we see that the desired convergence (\ref{convergence_of_skew_dim_q}) holds with 
	\begin{align*}
		\q A_i(x\mid K,\infty,\boldsymbol n)=
		\frac{-q^{-1}}{2\pi\i}
		\oint_{\q\Cont(0)}
		\frac{(w^{-1};q)_{\infty}(w^{-1}q^{-x-K-1};q)_{K-i}}
		{(w^{-1}q^{-x-K-1};q\mid\boldsymbol n)_{\infty}}
		\frac{dw}{w^{2}}.
	\end{align*}
	A change of variables $u=1/w$ gives
	\begin{align*}
		\q A_i(x\mid K,\infty,\boldsymbol n)=
		\frac{q^{-1}}{2\pi\i}
		\oint_{\q\Cont(1)}
		\frac{(u;q)_{\infty}(uq^{-x-K-1};q)_{K-i}}
		{(uq^{-x-K-1};q\mid\boldsymbol n)_{\infty}}
		du.
	\end{align*}
	Indeed, the $w$ contour $\q\Cont(0)$ encircles a segment of the form $[-\epsilon,1+\epsilon]$, so the $u$ contour contains the possible poles $q,q^{2},\ldots$, and only them, and thus can be replaced by $\q\Cont(1)$. Another change of variables, $z=uq^{-x-K-1}$, concludes the proof.	
\end{proof}


\subsection{$q$-Toeplitz matrices} 
\label{sub:_q_toeplitz_matrices}

In the ``classical'' ($q=1$) picture, for regular sequences of signatures $\{\nu(N)\}$ (see Question \ref{question}), the $N\to\infty$ limit of $A_i(x\mid K,N,\nu(N))$ (\ref{A_i}) for fixed $K$, $i$, and $x$ is equal to $\varphi_{i+x}(\om)$, where $\om$ is the point of the boundary $\partial(\GT)$ corresponding to $\{\nu(N)\}$ (see \S \ref{sub:description_of_the_boundary} and \S \ref{sec:idea_of_proof_of_the_uniform_approximation_theorem}). In other words, the limit of $A_i(-x\mid K,N,\nu(N))$ viewed as a matrix with indices $i$ and $x$, is a \emph{Toeplitz matrix}. Moreover, this limiting Toeplitz matrix is totally nonnegative (see \S \ref{sub:totally_nonnegative_toeplitz_matrices} for more discussion).

As was noted by Vadim Gorin (private communication), in the $0<q<1$ case the limiting quantities $\q A_i(x\mid K,\infty,\boldsymbol n)$ (\ref{qA_i_infinity}) should satisfy some version of the \emph{$q$-Toeplitz property} introduced in \cite{Gorin2010q}. The integral formula (\ref{qA_i_infinity}) allows to observe such a property directly:
\begin{proposition}\label{prop:qA_i_qToeplitz}
	For any boundary point $\boldsymbol n\in\mathcal{N}=\partial({\q\GT})$, all fixed $K=1,2,\ldots$, $i=1,\ldots,K$, and $x\in\Z$, one has
	\begin{align}\label{three_term_qA_i}
		\q A_{i-1}(x)\cdot q^{i}=\q A_{i}(x-1)\cdot 
		q^{1-x}+\q A_{i}(x)\cdot (q^{i}-q^{-x}),
	\end{align}
	where we abbreviate $\q A_{j}(y)\equiv \q A_j(y\mid K,\infty,\boldsymbol n)$.
\end{proposition}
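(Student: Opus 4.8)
The plan is to work directly with the single contour integral representation (\ref{qA_i_infinity}) for $\q A_i(x\mid K,\infty,\boldsymbol n)$ and verify the three-term relation (\ref{three_term_qA_i}) by comparing integrands. I would first substitute the definition of each of the three quantities $\q A_{i-1}(x)$, $\q A_{i}(x-1)$, and $\q A_{i}(x)$ into (\ref{three_term_qA_i}). The key difficulty is that the three integrals are a priori taken over \emph{different} contours: $\q A_{i-1}(x)$ and $\q A_i(x)$ both involve $\q\Cont(-x-K)$, but $\q A_i(x-1)$ involves $\q\Cont(-x-K+1)$. So the very first step is to reconcile the contours. The factor $(zq^{x+K+1};q)_{\infty}$ in the numerator of (\ref{qA_i_infinity}) has zeros at $z=q^{-x-K-1},q^{-x-K-2},\ldots$, which lie \emph{outside} the relevant encircled region; more usefully, the prefactor $q^{x+K}$ and the explicit shift structure suggest that after the change $x\mapsto x-1$ one can deform $\q\Cont(-x-K+1)$ back to $\q\Cont(-x-K)$ at the cost of a residue or a clean shift, since the integrand's poles (coming from $(z;q\mid\boldsymbol n)_{\infty}$ in the denominator) are located at the points $z=q^{-r-n_{r+1}}$ and are unaffected by the $x$-shift. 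I expect that once all three terms are written over the common contour $\q\Cont(-x-K)$, the identity reduces to a pointwise algebraic identity among the integrands.

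Concretely, after normalizing contours I would factor out the common part of the integrand, namely $\dfrac{(zq^{x+K+1};q)_{\infty}}{(z;q\mid\boldsymbol n)_{\infty}}$ together with the shared powers of $q$, and reduce (\ref{three_term_qA_i}) to an identity involving only the short $q$-Pochhammer factors $(z;q)_{K-i+1}$ and $(z;q)_{K-i}$ and the explicit monomials $q^{i}$, $q^{1-x}$, $q^{-x}$. Writing $(z;q)_{K-i+1}=(z;q)_{K-i}(1-zq^{K-i})$ exposes the single extra factor distinguishing the $i-1$ term from the $i$ terms, and the whole relation should collapse to verifying
\begin{align*}
	q^{i}\,(1-zq^{K-i+1})\;=\;q^{1-x}\cdot(\text{shift factor})\;+\;(q^{i}-q^{-x})
\end{align*}
after carefully tracking how the $x\mapsto x-1$ shift acts on the prefactor $q^{x+K}$, on the argument $zq^{x+K+1}$ inside the infinite $q$-Pochhammer, and on the index $K-i$ in $(zq^{x+K+1};q)_{K-i}$ versus $(z;q)_{K-i}$ in (\ref{qA_i_infinity}). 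I would carry this out by expressing everything in terms of the common variable $z$ and matching coefficients of like powers of $z$.

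The main obstacle I anticipate is the bookkeeping around the shift $x\mapsto x-1$: unlike the finite-$N$ formula (\ref{qA_i}), the limiting integrand (\ref{qA_i_infinity}) mixes a prefactor $q^{x+K}$, an infinite product $(zq^{x+K+1};q)_{\infty}$ whose argument depends on $x$, a finite factor $(z;q)_{K-i}$ that does \emph{not} depend on $x$, and a denominator $(z;q\mid\boldsymbol n)_{\infty}$ independent of both $x$ and $i$. Getting the three integrands onto a single contour with a single common infinite-product factor will require either an explicit change of variables absorbing the shift (e.g.\ $z\mapsto zq$ or $z\mapsto zq^{-1}$, which rescales both $(zq^{x+K+1};q)_\infty$ and the contour simultaneously) or a residue computation; I expect the former to be cleaner. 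Once the contours and the $x$-dependence are aligned, the remaining verification is a finite algebraic identity among $q$-monomials that should follow by direct expansion, with no analytic subtlety. As a sanity check I would confirm that the relation (\ref{three_term_qA_i}) is the $N=\infty$ limit of the corresponding finite-$N$ three-term relation satisfied by (\ref{qA_i}), which gives independent confidence in the bookkeeping.
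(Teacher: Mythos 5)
Your plan is exactly the paper's proof: enlarge the contour of $\q A_i(x-1)$ from $\q\Cont(-x+1-K)$ to $\q\Cont(-x-K)$ (legitimate because the numerator $(zq^{x+K};q)_{\infty}$ of that integrand vanishes at the single newly enclosed point $z=q^{-x-K}$, killing any pole of $(z;q\mid\boldsymbol n)_{\infty}^{-1}$ there), and then check the pointwise identity of integrands over the common contour, which after cancelling the shared factor reduces to $q^{i}(1-zq^{K-i})=q^{-x}(1-zq^{x+K})+(q^{i}-q^{-x})$, i.e.\ $q^{i}-zq^{K}=q^{i}-zq^{K}$. The only slip in your sketch is the exponent in your displayed target identity: the extra factor distinguishing $(z;q)_{K-i+1}$ from $(z;q)_{K-i}$ is $1-zq^{K-i}$ (as you yourself wrote), not $1-zq^{K-i+1}$.
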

Note that for $q=1$, (\ref{three_term_qA_i}) is reduced to $\q A_{i-1}(x)=\q A_{i}(x-1)$, which agrees with the usual Toeplitz property in the $q\nearrow1$ limit (cf. \S \ref{sub:limit_as_qto_1_}).
\begin{proof}
	Due to the zeroes of the integrand, for $\q A_{i}(x-1)$ the contour $\q\Cont(-x+1-K)$ in (\ref{qA_i_infinity}) can be replaced by $\q\Cont(-x-K)$. In this way, all the three terms in (\ref{three_term_qA_i}) are expressed as integrals over the same contour. Then it is readily checked that the desired three-term relation is satisfied by the corresponding integrands.
\end{proof}

To rewrite relation (\ref{three_term_qA_i}) exactly in the form of the $q$-Toeplitz property \cite[(5)]{Gorin2010q}, introduce new quantities
\begin{align}\label{B_x_i}
	B^{\boldsymbol n}(x,i):=
	\q A_{K+1-i}(x-K-1\mid K,\infty,\boldsymbol n)
	\cdot q^{\frac12(x-i)(x+i-3)}.
\end{align}
It readily follows from (\ref{qA_i_infinity}) that $B^{\boldsymbol n}(x,i)$ does not depend on $K$:
\begin{align}\label{B_x_i_cont}
	B^{\boldsymbol n}(x,i)=
	\frac{q^{\frac12(x-i+1)(x+i-2)}}{2\pi\i}
	\oint_{\q\Cont(-x+1)}
	\frac{(zq^{x};q)_{\infty}(z;q)_{i-1}}
	{(z;q\mid\boldsymbol n)_{\infty}}
	dz.
\end{align}
For these $B^{\boldsymbol n}$'s it can be checked that
\begin{align}\label{q_Toeplitz_of_B}
	B^{\boldsymbol n}(x,i+1)=B^{\boldsymbol n}(x-1,i)+(q^{1-i}-q^{1-x})B^{\boldsymbol n}(x,i),
\end{align}
which coincides with \cite[(5)]{Gorin2010q}.

The singe contour integral formula for $B^{\boldsymbol n}(x,i)$ allows to observe one more property of these quantities:
\begin{proposition}\label{prop:B_1_summation}
	We have for $n_1\ge0$:\footnote{This assumption is not very restrictive, see \cite[Thm.~1.1.3]{Gorin2010q}.}
	\begin{align}\label{B_1_summation}
		\sum_{\ell=0}^{\infty}
		B^{\boldsymbol n}(\ell+1,1)\prod_{i=0}^{\ell-1}(q^{-i}-z)=
		\frac{(z;q)_{\infty}}{(z;q\mid \boldsymbol n)_{\infty}}.
	\end{align}
	The right-hand side is an entire function in $z$ (by the Weierstrass factorization theorem), and the series converges everywhere in $\C$.
\end{proposition}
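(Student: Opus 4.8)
The plan is to prove Proposition \ref{prop:B_1_summation} by expanding the right-hand side as a power series in $z$ and matching coefficients with the left-hand side, using the single contour integral representation (\ref{B_x_i_cont}) for $B^{\boldsymbol n}(\ell+1,1)$. First I would specialize (\ref{B_x_i_cont}) to the case $i=1$, where $(z;q)_{i-1}=(z;q)_0=1$, and set $x=\ell+1$, obtaining
\begin{align*}
	B^{\boldsymbol n}(\ell+1,1)=
	\frac{q^{\frac12\ell(\ell+1)}}{2\pi\i}
	\oint_{\q\Cont(-\ell)}
	\frac{(zq^{\ell+1};q)_{\infty}}{(z;q\mid\boldsymbol n)_{\infty}}
	\,dz.
\end{align*}
The integrand is, up to the entire prefactor $(z;q)_\infty/(z;q\mid\boldsymbol n)_\infty$, a ratio whose only poles lie at the points $q^{-r-n_{r+1}}$ coming from $(z;q\mid\boldsymbol n)_\infty$; the contour $\q\Cont(-\ell)$ encircles exactly $q^{-\ell},q^{-\ell+1},\ldots,q^{0}$ among the relevant points (note $(zq^{\ell+1};q)_\infty$ supplies zeroes that cancel the first $\ell+1$ potential poles of $(z;q)_\infty^{-1}$, which is how $\q\Cont$ is normalized).

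Next I would observe that the polynomial $\prod_{i=0}^{\ell-1}(q^{-i}-z)$ appearing in the sum has degree $\ell$, and the key algebraic fact is the identity
\begin{align*}
	(z;q)_\infty = (z;q)_\ell\cdot (zq^\ell;q)_\infty,
	\qquad
	(z;q)_\ell=\prod_{i=0}^{\ell-1}(1-zq^{i}),
\end{align*}
together with the relation between $\prod_{i=0}^{\ell-1}(q^{-i}-z)$ and $(z;q)_\ell$: indeed $\prod_{i=0}^{\ell-1}(q^{-i}-z)=q^{-\binom{\ell}{2}}\prod_{i=0}^{\ell-1}(1-zq^{i})=q^{-\binom{\ell}{2}}(z;q)_\ell$. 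This lets me rewrite the summand $B^{\boldsymbol n}(\ell+1,1)\prod_{i=0}^{\ell-1}(q^{-i}-z)$ as a contour integral in a dummy variable, say $\zeta$, whose integrand contains $(z;q)_\ell\,(\zeta q^{\ell+1};q)_\infty/(\zeta;q\mid\boldsymbol n)_\infty$ times an explicit power of $q$. The strategy is then to sum over $\ell$ \emph{inside} the integral: the $z$-dependence of the summand is entirely the factor $(z;q)_\ell$, so I would use the $q$-binomial type generating identity
\begin{align*}
	\sum_{\ell=0}^{\infty} q^{\frac12\ell(\ell+1)-\binom{\ell}{2}}\,(z;q)_\ell\,g_\ell
	=\sum_{\ell=0}^{\infty} q^{\ell}(z;q)_\ell\, g_\ell
\end{align*}
to collapse the series against the $\ell$-dependent residue data $g_\ell$ coming from the integral, thereby reconstructing $(z;q)_\infty/(z;q\mid\boldsymbol n)_\infty$.

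More concretely, I expect the cleanest route is to evaluate each $B^{\boldsymbol n}(\ell+1,1)$ by residues at the poles $\zeta=q^{-r-n_{r+1}}$ inside $\q\Cont(-\ell)$, producing an explicit (if complicated) sum, multiply by $q^{-\binom{\ell}{2}}(z;q)_\ell$, and then swap the order of summation over $\ell$ and over the residue index $r$. After the swap, the inner sum over $\ell$ should telescope or sum via a finite $q$-series to exactly the residue expansion of $(z;q)_\infty/(z;q\mid\boldsymbol n)_\infty$ at $z=q^{-r-n_{r+1}}$; matching residues at every pole and checking the entire parts agree (using that both sides are entire by Weierstrass, as stated) then forces equality. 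The convergence claims are handled by noting $(z;q)_\ell$ grows only polynomially in the relevant region while the residues of $B^{\boldsymbol n}(\ell+1,1)$ carry a factor $q^{\frac12\ell(\ell+1)}$ that decays superexponentially, guaranteeing absolute convergence and justifying the interchange.

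The main obstacle I anticipate is bookkeeping the powers of $q$ correctly through the three transformations — the residue evaluation of (\ref{B_x_i_cont}), the multiplication by $\prod_{i=0}^{\ell-1}(q^{-i}-z)=q^{-\binom{\ell}{2}}(z;q)_\ell$, and the summation over $\ell$ — so that the Gaussian-type exponents $\tfrac12\ell(\ell+1)$, $\binom{\ell}{2}$, and the $q^\ell$ from the pole at $q^{-r-n_{r+1}}$ combine to the clean factorization on the right-hand side. A conceptually cleaner alternative, which I would try first, is to avoid residues entirely: substitute (\ref{B_x_i_cont}) with $i=1$ into the left-hand side, interchange sum and integral (justified by the decay above), and sum the geometric-looking series $\sum_\ell q^{\frac12\ell(\ell+1)}q^{-\binom{\ell}{2}}(z;q)_\ell(\zeta q^{\ell+1};q)_\infty$ in closed form using $(\zeta q^{\ell+1};q)_\infty=(\zeta;q)_\infty/(\zeta;q)_{\ell+1}$; the resulting single integral should then evaluate directly to $(z;q)_\infty/(z;q\mid\boldsymbol n)_\infty$ by a residue computation at the poles of $1/(\zeta;q\mid\boldsymbol n)_\infty$. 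Either way the heart of the argument is one $q$-series summation identity, and verifying that identity with the precise exponents is where the real work lies.
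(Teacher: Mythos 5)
Your setup is right --- the specialization of (\ref{B_x_i_cont}) to $i=1$, $x=\ell+1$, and the identities $(zq^{\ell+1};q)_\infty=(z;q)_\infty/(z;q)_{\ell+1}$ and $\prod_{i=0}^{\ell-1}(q^{-i}-z)=q^{-\binom{\ell}{2}}(z;q)_\ell$ are exactly the ingredients the paper uses. But the proposal stops short of a proof at precisely the point you yourself flag as ``where the real work lies.'' What is actually needed is the orthogonality relation
\begin{align*}
	\frac{1}{2\pi\i}\oint_{\q\Cont(-\ell)}\frac{\prod_{i=0}^{m-1}(q^{-i}-z)}{(z;q)_{\ell+1}}\,dz
	=q^{-\ell(\ell+1)/2}\,\delta_{m\ell},
\end{align*}
i.e.\ the statement that the integral defining $B^{\boldsymbol n}(\ell+1,1)$ extracts the $\ell$-th coefficient of the Newton-type expansion of the entire function $(z;q)_\infty/(z;q\mid\boldsymbol n)_\infty$ in the basis $\prod_{i=0}^{\ell-1}(q^{-i}-z)$. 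This is the content of the paper's Lemma \ref{lemma:q_Cauchy}, proved there in a few lines: write $\phi(z)/(z;q)_{j+1}$ via the expansion of $\phi$, note that the terms with $m>j$ are entire, the terms with $m<j$ decay like $z^{m-j-1}$ at infinity and have all their poles inside $\q\Cont(-j)$ (hence integrate to zero), and only the $m=j$ term survives. You neither state nor prove this identity, so the argument is incomplete.

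Beyond the missing central step, both of your sketched routes have concrete obstacles. The ``cleaner alternative'' of interchanging $\sum_\ell$ with $\oint$ fails as stated because the contour $\q\Cont(-\ell)$ genuinely depends on $\ell$: it must enclose all poles $\zeta=q^{-r-n_{r+1}}$ with $r+n_{r+1}\le\ell$, and these march off to infinity, so there is no common contour on which to sum the series $\sum_\ell q^\ell(z;q)_\ell(\zeta;q)_\infty/(\zeta;q)_{\ell+1}$. In the residue route, the final step ``matching residues at every pole \ldots forces equality'' is vacuous for the identity at hand: both sides of (\ref{B_1_summation}) are entire, so there are no residues to match; what one must show is convergence of the Newton series to $\phi$, which is a different (interpolation-theoretic) statement. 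Finally, the convergence estimate is imprecise: $(z;q)_\ell$ does not ``grow polynomially'' --- it is bounded (it converges to $(z;q)_\infty$) --- and the superexponential growth sits in $q^{-\binom{\ell}{2}}$, which against the prefactor $q^{\ell(\ell+1)/2}$ leaves only a net factor $q^{\ell}$; one still has to bound the integral itself uniformly over the growing contours $\q\Cont(-\ell)$, which is not addressed. The paper's device of isolating Lemma \ref{lemma:q_Cauchy} sidesteps all of this bookkeeping, and I would recommend reorganizing your argument around that lemma (or an equivalent orthogonality statement) rather than around a term-by-term $q$-series summation.
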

This proposition follows from a more general lemma:
\begin{lemma}\label{lemma:q_Cauchy}
	Let $\phi(z)$ be an entire function, and consider the expansion
	\begin{align*}
		\phi(z)=\sum_{\ell=0}^{\infty}
		c_\ell\prod_{i=0}^{\ell-1}(q^{-i}-z).
	\end{align*}
	The coefficients of this expansion admit the following integral representation:
	\begin{align*}
		c_{\ell}=
		\frac{q^{\ell(\ell+1)/2}}{2\pi\i}
		\oint_{\q\Cont(-\ell)}
		\frac{\phi(z)}{(z;q)_{\ell+1}}dz,\qquad \ell=0,1,2,\ldots.
	\end{align*}
\end{lemma}
\begin{remark}
	Lemma \ref{lemma:q_Cauchy} may be viewed as an inversion formula for some $q$-Laplace transform. A similar statement appears in \cite[Prop. 3.1.1]{BorodinCorwin2011Macdonald} with references to $q$-versions of the Laplace transform in \cite{Hahn1949} and in a recent manuscript \cite{Bangerezako}. To make our argument self-contained, let us present a proof of this statement.
\end{remark}
\noindent\emph{Proof of Lemma \ref{lemma:q_Cauchy}.} 
	Fix $j\ge0$ and consider
	\begin{align*}
		\frac{\phi(z)}{(z;q)_{j+1}}&=
		\frac{c_{j}q^{-j(j-1)/2}}{1-zq^{j}}+
		\sum_{\ell=0}^{j-1}c_\ell q^{-\ell(\ell-1)/2}
		\frac{(z;q)_{\ell}}{(z;q)_{j+1}}+
		\mbox{an entire function}.
	\end{align*}
	Integrating this equality over the contour $\q\Cont(-j)$ which encircles only the possible poles $q^{-j},q^{-j+1},\ldots$, we see that the contribution from the sum over $\ell$ vanishes because every term of this sum has no residue outside $\q\Cont(-j)$ (it behaves as $\mathrm{const}\cdot z^{\ell-j-1}$ at infinity). The holomorphic part also vanishes, so we have 
	\begin{align*}
		\frac{1}{2\pi\i}
		\oint_{\q\Cont(-j)}
		\frac{\phi(z)}{(z;q)_{j+1}}dz=
		\frac{1}{2\pi\i}
		\oint_{\q\Cont(-j)}
		\frac{c_{j}q^{-j(j-1)/2}}{1-zq^{j}}dz.
	\end{align*}
	Computing the integral in the right-hand side, we conclude the proof.
\qed

\smallskip

\noindent\emph{Proof of Proposition \ref{prop:B_1_summation}.} From (\ref{B_x_i_cont}) we have 
\begin{align*}
	B^{\boldsymbol n}(\ell+1,1)=
	\frac{q^{\frac12\ell(\ell+1)}}{2\pi\i}
	\oint_{\q\Cont(-\ell)}
	\frac{(zq^{\ell+1};q)_{\infty}}
	{(z;q\mid\boldsymbol n)_{\infty}}
	dz.
\end{align*}
It remains to note that ${(zq^{\ell+1};q)_{\infty}}=\frac{(z;q)_{\infty}}{(z;q)_{\ell+1}}$, and use Lemma \ref{lemma:q_Cauchy}. \qed
\medskip

Thus, we are led to the following statement:
\begin{proposition}\label{prop:limit_q_links_q_Toeplitz}
	For a $q$-regular sequence $\{\nu(N)\}$ corresponding to a boundary point $\boldsymbol n\in\mathcal{N}=\partial(\q\GT)$ with $n_1\ge0$, the limit of the $q$-links is given by 
	\begin{align}&
		\label{limit_q_links_q_Toeplitz}
		\lim_{N\to\infty}
		\q\La^{N}_{K}\big(\nu(N),\ka\big)
		=\Dim_K\ka\cdot
		q^{-(K-1)|\ka|}q^{\sum_{r=1}^{K}(r\ka_r-\ka_r(\ka_r+1)/2)}
		\times\\&\hspace{190pt}\times
		\det[B^{\boldsymbol n}(\ka_{K+1-i}+i,j)]_{i,j=1}^{K},
		\nonumber
	\end{align}
	where $B^{\boldsymbol n}(i,j)$, $i,j\ge1$, is a unique $q$-Toeplitz matrix (in the sense of (\ref{q_Toeplitz_of_B})) whose first column satisfies (\ref{B_1_summation}).
\end{proposition}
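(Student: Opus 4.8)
The plan is to assemble this final statement from the pieces already established, since Proposition \ref{prop:limit_q_links_q_Toeplitz} is essentially a repackaging of Proposition \ref{prop:limit_q} together with the change of variables (\ref{B_x_i}) and the $q$-Toeplitz characterization from Propositions \ref{prop:qA_i_qToeplitz} and \ref{prop:B_1_summation}. First I would recall that by definition (\ref{qlink_La}) of the $q$-links, $\q\La^{N}_{K}(\nu(N),\ka)=\q\Dim_K\ka\cdot\frac{\q\Dim_{K,N}(\ka,\nu(N))}{\q\Dim_N\nu(N)}$, so the $N\to\infty$ limit of the ratio is controlled by Proposition \ref{prop:limit_q}. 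Note that the prefactor $\q\Dim_K\ka=s_\ka(1,q,\ldots,q^{K-1})$ is independent of $N$ and passes through the limit unchanged; one should keep track of whether the statement writes this as $\Dim_K\ka$ times a power of $q$, and reconcile the normalization via the product formula (\ref{s_nu_q}) for $\q\Dim_K\ka$ against $\Dim_K\ka$ from (\ref{Dim_N_product}).

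Next I would substitute the limiting formula (\ref{convergence_of_skew_dim_q}), which gives $\det[\q A_i(\ka_j-j\mid K,\infty,\boldsymbol n)]_{i,j=1}^{K}$, and rewrite each matrix entry in terms of the $B^{\boldsymbol n}$'s using the defining relation (\ref{B_x_i}): inverting it yields
\begin{align*}
	\q A_{K+1-i}(x-K-1\mid K,\infty,\boldsymbol n)
	=B^{\boldsymbol n}(x,i)\cdot q^{-\frac12(x-i)(x+i-3)}.
\end{align*}
Setting $x-K-1=\ka_j-j$, i.e. $x=\ka_j-j+K+1$, and re-indexing the rows (replacing $i$ by $K+1-i$, which only permutes rows and at most changes the determinant by the sign of the reversal permutation, to be absorbed into the overall sign/power-of-$q$ prefactor), I would extract the scalar factors $q^{-\frac12(x-i)(x+i-3)}$ out of each row or column of the determinant. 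Collecting these extracted powers of $q$ over all columns, together with the normalization factor relating $\q\Dim_K\ka$ to $\Dim_K\ka$, should reproduce exactly the prefactor $q^{-(K-1)|\ka|}q^{\sum_{r=1}^{K}(r\ka_r-\ka_r(\ka_r+1)/2)}$ appearing in (\ref{limit_q_links_q_Toeplitz}), leaving the determinant $\det[B^{\boldsymbol n}(\ka_{K+1-i}+i,j)]_{i,j=1}^{K}$.

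Finally I would invoke Proposition \ref{prop:qA_i_qToeplitz} (via its reformulation (\ref{q_Toeplitz_of_B})) to identify $B^{\boldsymbol n}(i,j)$ as a $q$-Toeplitz matrix, and Proposition \ref{prop:B_1_summation} to pin down its first column through (\ref{B_1_summation}); uniqueness of a $q$-Toeplitz matrix with prescribed first column is immediate from the recurrence (\ref{q_Toeplitz_of_B}), since that relation determines every subsequent column $B^{\boldsymbol n}(\cdot,i+1)$ from $B^{\boldsymbol n}(\cdot,i)$. The main obstacle, and the only genuinely computational step, will be the careful bookkeeping of the powers of $q$ and the signs: one must verify that the product over columns of the extracted factors $q^{-\frac12(x-i)(x+i-3)}$ with $x=\ka_j-j+K+1$, after the row reversal and after combining with the $N$-independent prefactors already present in (\ref{skew_dim_det_qA}) and in the normalization $\q\Dim_K\ka$, telescopes precisely into the stated exponent $-(K-1)|\ka|+\sum_{r=1}^{K}(r\ka_r-\ka_r(\ka_r+1)/2)$. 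This is a routine but delicate exponent-matching computation; everything else follows formally from the cited propositions.
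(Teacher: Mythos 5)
Your overall architecture coincides with the paper's: its proof is exactly the one-line assembly you describe --- combine (\ref{qlink_La}) with Proposition \ref{prop:limit_q}, rewrite the matrix entries via (\ref{B_x_i}), and invoke (\ref{q_Toeplitz_of_B}) together with Proposition \ref{prop:B_1_summation} for the $q$-Toeplitz characterization and the uniqueness of the matrix with a prescribed first column. Two of the bookkeeping worries you raise in fact resolve cleanly. Since $(x-i)(x+i-3)=x^2-3x-(i^2-3i)$, the factor $q^{\frac12(x-i)(x+i-3)}$ in (\ref{B_x_i}) splits into a row part times a column part and so comes out of the determinant as a single monomial; and the index change $(i,j)\mapsto(K+1-j,K+1-i)$ is a transpose conjugated by the reversal permutation, hence contributes \emph{no} sign (your ``row reversal'' alone would give $(-1)^{K(K-1)/2}$, which could not be absorbed into the positive prefactor). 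A short computation then shows that the extracted monomial is precisely the reciprocal of the stated prefactor $q^{-(K-1)|\ka|}q^{\sum_{r}(r\ka_r-\ka_r(\ka_r+1)/2)}$. Also, there are no leftover constants from (\ref{skew_dim_det_qA}) to track at this stage: Proposition \ref{prop:limit_q} already packages the limit of $\q\Dim_{K,N}/\q\Dim_N$ as a bare determinant.

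The one genuine obstruction sits exactly in the step you call ``reconcile the normalization \dots\ $\q\Dim_K\ka$ against $\Dim_K\ka$''. Because the determinant rewriting already exhausts the entire monomial prefactor, your plan forces $\q\Dim_K\ka/\Dim_K\ka$ to be a power of $q$, and by (\ref{s_nu_q}) and (\ref{Dim_N_product}) it is not: for $K=2$, $\ka=(1,0)$ the ratio is $(1+q)/2$. Concretely, for that $\ka$ the left-hand side of (\ref{limit_q_links_q_Toeplitz}) equals $(1+q)\det[\q A_i(\ka_j-j)]_{i,j=1}^{2}$ while the right-hand side, unwound through (\ref{B_x_i}), equals $2\det[\q A_i(\ka_j-j)]_{i,j=1}^{2}$. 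So the identity cannot be proved with $\Dim_K\ka$ read literally; that factor must be $\q\Dim_K\ka=s_\ka(1,q,\ldots,q^{K-1})$, after which your argument goes through verbatim with \emph{no} reconciliation step at all. The paper's own two-sentence proof glosses over this point, so you should flag it rather than promise that the ``routine but delicate exponent-matching'' will close the gap --- it will not, because the discrepancy is not a monomial.
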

\begin{proof}
	This a combination of (\ref{qlink_La}), Propositions \ref{prop:limit_q} and \ref{prop:B_1_summation} and the fact that a $q$-Toeplitz matrix is completely determined by its first column via (\ref{q_Toeplitz_of_B}). In the right-hand side of (\ref{limit_q_links_q_Toeplitz}) we have also rewritten the determinant $\det[\q A_i(\ka_j-j\mid K,\infty,\boldsymbol n)]_{i,j=1}^{K}$ in terms of the $q$-Toeplitz elements $B^{\boldsymbol n}(i,j)$.
\end{proof}

\begin{remark}
	\textbf{1.} The limit $\lim_{N\to\infty}\q\La^{N}_{K}\big(\nu(N),\ka\big)$ in the left-hand side of (\ref{limit_q_links_q_Toeplitz}) is in fact (see the discussion of \S \ref{sub:connection_to_question_question} which is also applicable in the $0<q<1$ case) equal to the value of the extreme coherent system corresponding to $\boldsymbol n\in\mathcal{N}$ at the signature $\ka\in\GT_K$. This quantity is denoted by $\mathcal{E}^{\boldsymbol n}_{K}(\ka)$ in \cite{Gorin2010q}.

	\textbf{2.}	Proposition \ref{prop:limit_q_links_q_Toeplitz} can be deduced from the results of \cite{Gorin2010q} in the following way. Under the assumption $n_1\ge0$, in \cite[Thm.~1.1.2]{Gorin2010q} a certain generating function for the quantities $\{\mathcal{E}^{\boldsymbol n}_{K}(\ka)\}_{\ka\in\GT_K}$ is written out explicitly as a $K$-fold product.\footnote{This development is parallel to the $q=1$ considerations explained in \S \ref{sub:totally_nonnegative_toeplitz_matrices}.} This fact (together with some formulas from \cite[proofs of Prop.~1.4~(\S7) and Lemma~6.5 (\S6.2)]{Gorin2010q}) allows to write the identity (\ref{limit_q_links_q_Toeplitz}). The elements of the $q$-Toeplitz matrix there are defined in the same way as in Proposition \ref{prop:limit_q_links_q_Toeplitz} (e.g., see \cite[Thm. 7.1]{Gorin2010q}).

	This implies that the minors of the $q$-Toeplitz matrix $B^{\boldsymbol n}(i,j)$, $i,j\ge1$, that enter the right-hand side of (\ref{limit_q_links_q_Toeplitz}) for various $K$ and $\ka\in\GT_K$ are nonnegative. However, the matrix $B^{\boldsymbol n}(i,j)$ itself is not totally nonnegative. See \cite[\S1.5]{Gorin2010q} for more discussion.
\end{remark}

In addition to reproving some results of \cite{Gorin2010q}, using our contour integral formulas we are able to readily obtain a solution of the $q$-Toeplitz recurrence relation 
\begin{align}\label{q_Toeplitz_equation}
	d(x,i+1)=d(x-1,i)+(q^{1-i}-q^{1-x})d(x,i), \qquad
	x,i=1,2,\ldots
\end{align}
(with agreement that $d(x,i)=0$ if either $x$ or $i$ is $\le0$) with initial condition 
\begin{align}\label{q_Toeplitz_IC}
	d(\ell+1,1)=c_{\ell},\qquad \ell=0,1,\ldots.
\end{align}

\begin{proposition}
	Let the series 
	\begin{align*}
		\phi(z)=\sum_{\ell=0}^{\infty}
		c_\ell\prod_{i=0}^{\ell-1}(q^{-i}-z)
	\end{align*}
	converge everywhere in $\C$. Then the solution of (\ref{q_Toeplitz_equation})--(\ref{q_Toeplitz_IC}) is given for $x,i\ge1$ by the following contour integral:
	\begin{align*}
		d(x,i)=\frac{q^{\frac12(x-i+1)(x+i-2)}}{2\pi\i}
		\oint_{\q\Cont(-x+1)}
		\phi(z)
		\frac{(z;q)_{i-1}}
		{(z;q)_{x}}
		dz.
	\end{align*}
\end{proposition}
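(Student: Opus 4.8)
The plan is to write the right-hand side as $\tilde d(x,i)$ and to verify that it satisfies both the recurrence (\ref{q_Toeplitz_equation}) and the initial condition (\ref{q_Toeplitz_IC}). Since (\ref{q_Toeplitz_equation}) expresses the column $d(\cdot,i+1)$ through $d(\cdot,i)$, and the convention $d(x,i)=0$ for $x\le0$ fixes the left boundary, the solution with prescribed first column (\ref{q_Toeplitz_IC}) is unique; hence checking these two properties for $\tilde d$ finishes the proof. The whole argument parallels the proofs of Lemma~\ref{lemma:q_Cauchy} and Proposition~\ref{prop:qA_i_qToeplitz}, which I would cite directly.

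First I would verify the initial condition. Putting $i=1$ gives $(z;q)_{0}=1$, reduces the prefactor exponent $\frac12(x-i+1)(x+i-2)$ to $\frac12 x(x-1)$, and leaves the contour $\q\Cont(-x+1)$. With $x=\ell+1$ this reads
\begin{align*}
	\tilde d(\ell+1,1)=\frac{q^{\ell(\ell+1)/2}}{2\pi\i}\oint_{\q\Cont(-\ell)}\frac{\phi(z)}{(z;q)_{\ell+1}}\,dz,
\end{align*}
which is exactly $c_\ell$ by Lemma~\ref{lemma:q_Cauchy}. Thus (\ref{q_Toeplitz_IC}) holds.

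Next I would verify the recurrence. The three terms $\tilde d(x,i+1)$, $\tilde d(x-1,i)$, $\tilde d(x,i)$ carry prefactors $q^{A},q^{B},q^{C}$ with $A=\frac12(x-i)(x+i-1)$, $B=\frac12(x-i)(x+i-3)$, $C=\frac12(x-i+1)(x+i-2)$, and a short computation gives $A-B=x-i$ and $A-C=1-i$. The first and third already live on $\q\Cont(-x+1)$, while $\tilde d(x-1,i)$ lives on $\q\Cont(-x+2)$; but its integrand $\phi(z)(z;q)_{i-1}/(z;q)_{x-1}$ is regular at the single extra point $z=q^{-(x-1)}$ (the factor $(z;q)_{x-1}$ vanishes only at $z=q^{-r}$, $r=0,\ldots,x-2$), so its contour may be enlarged to $\q\Cont(-x+1)$ without changing the integral. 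With all three integrals on one contour, I would factor out the common $\phi(z)(z;q)_{i-1}/(z;q)_{x}$ using $(z;q)_i/(z;q)_x=(1-zq^{i-1})(z;q)_{i-1}/(z;q)_x$ and $(z;q)_x/(z;q)_{x-1}=1-zq^{x-1}$; the claim then reduces to the pointwise identity
\begin{align*}
	q^{A}(1-zq^{i-1})=q^{B}(1-zq^{x-1})+(q^{1-i}-q^{1-x})\,q^{C},
\end{align*}
which, after dividing by $q^{A}$ and inserting $A-B=x-i$, $A-C=1-i$, becomes the one-line check $1-zq^{i-1}=q^{i-x}(1-zq^{x-1})+(q^{1-i}-q^{1-x})q^{i-1}$, valid identically in $z$ and $q$. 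Integrating over $\q\Cont(-x+1)$ then yields (\ref{q_Toeplitz_equation}).

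The only step requiring genuine care---and the mild obstacle---is the contour bookkeeping. I must confirm that enlarging the $\tilde d(x-1,i)$ contour picks up no residue (done above via the zeros and poles of the $q$-Pochhammer factors), and that the left boundary $x=1$ is reproduced automatically: for $x=1$ the term $\tilde d(0,i)$ has integrand $\phi(z)(z;q)_{i-1}$, which is entire, so it integrates to $0$ and matches the convention $d(0,i)=0$ built into (\ref{q_Toeplitz_equation}). Everything else is the routine $q$-Pochhammer algebra already used in Proposition~\ref{prop:qA_i_qToeplitz}.
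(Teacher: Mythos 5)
Your proposal is correct, and it is precisely the argument the paper intends: the paper states this proposition without a written proof, relying on the reader to combine Lemma \ref{lemma:q_Cauchy} (which gives the initial condition at $i=1$, exactly as you do) with the integrand-level three-term check already demonstrated in Proposition \ref{prop:qA_i_qToeplitz}. Your added care with the contour enlargement for $\tilde d(x-1,i)$, the uniqueness remark, and the $x=1$ boundary case are all correct and complete the verification.
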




\providecommand{\bysame}{\leavevmode\hbox to3em{\hrulefill}\thinspace}
\providecommand{\MR}{\relax\ifhmode\unskip\space\fi MR }
\providecommand{\MRhref}[2]{%
  \href{http://www.ams.org/mathscinet-getitem?mr=#1}{#2}
}
\providecommand{\href}[2]{#2}

\end{document}